\documentclass[10pt,a4paper]{amsart}

\usepackage{color}

\usepackage[utf8]{inputenc}
\usepackage{stmaryrd}

\newcommand{\xint}{\mathrm{int}}
\newcommand{\intDop}{\displaystyle{\int_{\Dop}}\!\!\!\!\!\!}

\newcommand{\itcatl}{$(\infty,2)$-categorical}

\usepackage[sec,skipfonts,skipmbb]{enrabbrev}

\theoremstyle{definition}

\newtheorem{assumption}[thm]{Assumption}

\usepackage{eucal}

\title[On lax transformations, adjunctions, and monads in $(\infty,2)$-categories]{On lax transformations, adjunctions,\\ and monads in $(\infty,2)$-categories}

\newcommand{\Dop}{\simp^{\op}}

\newcommand{\Adj}{\txt{Adj}}

\newcommand{\End}{\txt{End}}

\setcounter{tocdepth}{1}

\newcommand{\lax}{\txt{lax}}

\newcommand{\colax}{\txt{colax}}
\newcommand{\ladj}{\txt{ladj}}
\newcommand{\radj}{\txt{radj}}
\newcommand{\mndradj}{\txt{mndradj}}

\newcommand{\LMod}{\txt{LMod}}

\newcommand{\fmnd}{\mathfrak{mnd}}

\newcommand{\Nat}{\txt{Nat}}

\renewcommand{\opctriangle}[6]{
\[%
\begin{tikzcd}%
  #1 \arrow[swap]{dr}{#5} \arrow{rr}{#4} \pgfmatrixnextcell \pgfmatrixnextcell #2 \arrow{dl}{#6} \\%
  {} \pgfmatrixnextcell #3 %
\end{tikzcd}%
\] %
}

\newcommand{\ie}{i.e.\@}

\newcommand{\CatIT}{\Cat_{(\infty,2)}}
\newcommand{\pcolax}{\txt{(co)lax}}
\newcommand{\ADJ}{\txt{ADJ}}
\newcommand{\MND}{\txt{MND}}
\newcommand{\END}{\txt{END}}
\newcommand{\fadj}{\mathfrak{adj}}
\newcommand{\fend}{\mathfrak{end}}
\newcommand{\igpd}{$\infty$-groupoid}
\newcommand{\igpds}{$\infty$-groupoids}
\newcommand{\twop}{\txt{2-op}}
\newcommand{\onop}{\txt{1-op}}
\newcommand{\obj}{\txt{ob}}
\newcommand{\bTt}{\bbTheta_{2}}

\usepackage{xstring}
\renewcommand{\eprint}[1]{\IfBeginWith{#1}{arXiv}{\href{https://arxiv.org/abs/#1}{#1}}{\href{#1}{#1}}}

\begin{document}

\begin{abstract}
  We use the basic expected properties of the Gray tensor product of
  $(\infty,2)$-categories to study (co)lax natural
  transformations. Using results of Riehl--Verity and Zaganidis we
  identify lax transformations between adjunctions and monads with
  commutative squares of (monadic) right adjoints. We also identify
  the colax transformations whose components are equivalences
  (generalizing the ``icons'' of Lack) with the 2-morphisms that arise
  from viewing $(\infty,2)$-categories as simplicial
  $\infty$-categories. Using this characterization we identify the
  $\infty$-category of monads on a fixed object and colax morphisms
  between them with the $\infty$-category of associative algebras in
  endomorphisms.
\end{abstract}
\maketitle

\tableofcontents

\section{Introduction}
Consider the following descriptions of \emph{monads} on an
\icat{}:
\begin{enumerate}[(A)]
\item\label{it:alg} A monad on $\mathcal{C}$ is an associative algebra in the monoidal \icat{}
  $\Fun(\mathcal{C}, \mathcal{C})$ of endofunctors under composition.
\item\label{it:mra} A monad is a functor of \icats{} that is a monadic right adjoint.
\item\label{it:mtf} A monad is a functor of \itcats{} $\fmnd \to \CATI$, where
  $\fmnd$ is the universal 2-category containing a monad and $\CATI$
  is the \itcat{} of \icats{}.
\end{enumerate}
These three definitions are known to be equivalent by results of
Lurie~\cite{HA}*{\S 4.7.3} and
Riehl--Verity~\cite{RiehlVerityAdj}.\footnote{Both prove versions of
  the monadicity theorem, which relate the first two and last two
  definitions, respectively.} However, these comparisons only relate
\igpds{} of monads. Our main goal in this paper is to enhance the
comparisons to take into account \emph{morphisms} of monads. For
\ref{it:alg} the obvious notion of morphism between monads on
$\mathcal{C}$ is a homomorphism of algebras in
$\Fun(\mathcal{C},\mathcal{C})$, while for \ref{it:mra} it is a
commutative triangle
\[
  \begin{tikzcd}
    \mathcal{A} \arrow{rr}{f} \arrow[swap]{dr}{r}  & & \mathcal{B}
    \arrow{dl}{r'} \\
     & \mathcal{C}
  \end{tikzcd}
  \]
where $r$ and $r'$ are monadic right adjoints.\footnote{On a fixed
  \icat{} $\mathcal{C}$, monads in sense \ref{it:alg} and \ref{it:mra}
  have already been compared by Heine~\cite{Heine}.} More generally, we can
allow the \icat{} $\mathcal{C}$ to vary and consider commutative
squares whose vertical morphisms are monadic right adjoints.

For ordinary 2-categories, Street~\cite{StreetFormalMonad} showed that
such squares of monadic right adjoints correspond to what he called
\emph{monad functors}, which are the same thing as \emph{lax natural
  transformations} between functors from $\fmnd$. To compare
\ref{it:mra} and \ref{it:mtf} we therefore start by studying lax
transformations in the setting of \itcats{}. These can be defined
using the \emph{(lax) Gray tensor product}. This has not yet been
fully developed for \itcats{}\footnote{Though several constructions
  have recently appeared, and this is a topic of active research; see \cref{rmk:grayconstr}.}, and we do not do
so here. Instead, we assume it has certain basic expected properties
(see \cref{ass:gray}) and proceed from there to define \itcats{}
$\FUN(\mathcal{Y}, \mathcal{X})_{\pcolax}$ of functors and (co)lax
transformations between \itcats{} $\mathcal{Y}$ and $\mathcal{X}$ in
\S\ref{sec:gray} after briefly reviewing some descriptions of
\itcats{} in \S\ref{subsec:itcat}.  Specializing $\mathcal{Y}$ to the
universal monad 2-category $\fmnd$ and the universal adjunction
2-category $\fadj$ we obtain \itcats{} $\MND(\mathcal{X})_{\pcolax}$
and $\ADJ(\mathcal{X})_{\pcolax}$ of, respectively, monads and
adjunctions, with (co)lax transformations as morphisms. Our comparison
of \ref{it:mra} and \ref{it:mtf} is then the combination of the
following two results:
\begin{thm}\label{thm:adjlaxradj}
  For any \itcat{} $\mathcal{X}$, restricting an adjunction to its right adjoint defines an
  equivalence of \itcats{}
  \[ \ADJ(\mathcal{X})_{\lax} \isoto \FUN(C_{1}, \mathcal{X})_{\radj},\]
  where the latter is the \itcat{} of morphisms in $\mathcal{X}$ that
  are right adjoints, with commutative squares as morphisms.
\end{thm}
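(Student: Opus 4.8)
The functor in the statement is restriction $\rho^{\ast}$ along the functor $\rho\colon C_1 \to \fadj$ that picks out the right-adjoint $1$-cell of the free adjunction: a functor $\fadj \to \mathcal{X}$ is precisely an adjunction in $\mathcal{X}$, and precomposition with $\rho$ returns its right adjoint, a right adjoint morphism of $\mathcal{X}$. The plan is to prove $\rho^{\ast}$ is an equivalence of \itcats{} by verifying the two conditions that detect equivalences in $\CatIT$: essential surjectivity on objects, and being a local equivalence, i.e.\ inducing an equivalence on each mapping $\infty$-category. (One first checks $\rho^{\ast}$ really lands in the full sub-\itcat{} $\FUN(C_1,\mathcal{X})_{\radj}$: immediate on objects, and on morphisms it is the elementary part of the mate calculus, namely that the square of right adjoints underlying a lax transformation of adjunctions is again a commutative square of right adjoints.) Essential surjectivity on objects is exactly the assertion that every right adjoint morphism of $\mathcal{X}$ underlies an adjunction, i.e.\ lies in the image of some functor $\fadj \to \mathcal{X}$; this is the object-level content of the homotopy-coherent adjunction theorem of Riehl--Verity, applied in $\mathcal{X}$ by means of \cref{ass:gray}. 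Their theorem in fact gives the sharper statement that the space of adjunctions refining a fixed right adjoint is contractible, and it is this relative form that will be used in the local step.

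For the local step, fix adjunctions $\mathbb{A}, \mathbb{A}'\colon \fadj \to \mathcal{X}$ with right adjoints $u = \rho^{\ast}\mathbb{A}$ and $u' = \rho^{\ast}\mathbb{A}'$. Unwinding the definition of the \itcat{} of functors and lax transformations through the Gray tensor product (\cref{ass:gray}), the mapping $\infty$-category $\mathrm{Map}_{\ADJ(\mathcal{X})_{\lax}}(\mathbb{A},\mathbb{A}')$ is identified with the $\infty$-category of functors $\fadj \Gtimes C_1 \to \mathcal{X}$ restricting to $\mathbb{A}$ and $\mathbb{A}'$ on the two end copies of $\fadj$, and likewise $\mathrm{Map}_{\FUN(C_1,\mathcal{X})_{\radj}}(u,u')$ is the $\infty$-category of functors $C_1 \Gtimes C_1 \to \mathcal{X}$ restricting to $u$ and $u'$ on the two prescribed copies of $C_1$ — that is, the $\infty$-category of commutative squares with $u$, $u'$ as the given pair of edges. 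Under these identifications the map induced by $\rho^{\ast}$ on mapping $\infty$-categories is restriction along $\rho \Gtimes C_1\colon C_1 \Gtimes C_1 \to \fadj \Gtimes C_1$, so it remains to show that for every choice of boundary data $(\mathbb{A},\mathbb{A}')$ this restriction is an equivalence of $\infty$-categories.

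This last point is the crux and the step I expect to be the main obstacle. It is the homotopy-coherent refinement of Street's mate calculus — a lax square of adjunctions should be equivalent data to the lax square of its right adjoints, the square of left adjoints together with all higher coherences being reconstructed by mates — and I would prove it by invoking the results of Riehl--Verity and of Zaganidis cited above: one wants $\fadj \Gtimes C_1$ to be built from $C_1 \Gtimes C_1$ by freely adjoining adjoint data fibrewise over $C_1$, so that the space of extensions of a commutative square of right adjoints along $\rho \Gtimes C_1$ is contractible. This "naturality of the homotopy-coherent adjunction theorem'' is precisely what Zaganidis's work supplies; granting it, $(\rho \Gtimes C_1)^{\ast}$ is an equivalence, hence $\rho^{\ast}$ is a local equivalence, and together with essential surjectivity on objects this shows $\rho^{\ast}$ is an equivalence of \itcats{}. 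The genuine difficulties are therefore (i) organizing the mate calculus into a statement about $\infty$-categories rather than a bijection on equivalence classes, which is exactly where the homotopy-coherence machinery is needed, and (ii) matching the Gray-tensor description of $\FUN(-,-)_{\lax}$ adopted here with the $\infty$-cosmos framework in which Riehl--Verity and Zaganidis work.
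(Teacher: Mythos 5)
Your overall scaffolding (essential surjectivity plus an equivalence on mapping \icats{}) is a legitimate way to organise the argument, and the object-level step is indeed just \cref{thm:rvadj}. The gap is the local step, which you yourself flag as the crux and then defer to ``Riehl--Verity and Zaganidis'': the boundary-fixed statement you need --- that the space of extensions of a commutative square of right adjoints along $\rho \otimes^{\lax} C_{1} \colon C_{1}\otimes^{\lax}C_{1} \to \fadj \otimes^{\lax} C_{1}$ (together with all higher cells) is contractible --- is not available in the form or generality required. Zaganidis works in the $\infty$-cosmos framework; in this paper his results are invoked only under the extra hypothesis that $\mathcal{X}$ is cosmifiable (\cref{thm:Zaganidis}, \cref{cor:mndadj}), whereas \cref{thm:adjlaxradj} is asserted for an arbitrary \itcat{}. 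Moreover, the universal property of $\fadj(I)_{\lax}$ recorded in \cref{rmk:Zagadj} is used in the paper only \emph{after} the theorem is proved, to identify $\fadj(I)_{\lax}$ with $\fadj\otimes^{\lax}I$; it is not the relative, boundary-fixed contractibility statement you formulate, and extracting that statement (with its naturality, and the comparison between the Gray-tensor definition of lax transformations and the enriched models --- your own difficulty (ii)) is essentially the content of the theorem. So as written the main mathematical step is not established.

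For comparison, the paper's proof (\cref{cor:ADJlaxeq}) avoids any relative form of the coherence theorem: for an arbitrary test \itcat{} $\mathcal{Z}$ one has $\Map(\mathcal{Z}, \ADJ(\mathcal{X})_{\lax}) \simeq \Map(\fadj, \FUN(\mathcal{Z},\mathcal{X})_{\colax})$, so the \emph{absolute} Riehl--Verity theorem applied to the \itcat{} $\FUN(\mathcal{Z},\mathcal{X})_{\colax}$ identifies this with the space of right adjoint $1$-morphisms there; the real work is then \cref{propn:laxfunadj}, a doctrinal-adjunction-type characterization (proved by an explicit mate construction over the cells $C_{0},C_{1},C_{2}$ using \cref{lem:graytenscolim}) saying that a colax transformation is a right adjoint exactly when its components are right adjoints and its naturality squares commute; this converts the answer into $\Map(\mathcal{Z}, \FUN(C_{1},\mathcal{X})_{\radj})$ and the Yoneda lemma finishes. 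Two further inaccuracies in your write-up: the mapping \icats{} of $\FUN(C_{1},\mathcal{X})_{\radj}$ are governed by the cartesian product $C_{1}\times C_{1}$ (commutative squares), not by $C_{1}\otimes^{\lax}C_{1}$ (lax squares), so your identification of the target mapping category conflates the two; and your parenthetical claim that the right-adjoint square underlying a lax morphism of adjunctions automatically commutes is true but is itself something that must be proved in the $\infty$-categorical setting (in the paper it falls out of \cref{propn:laxfunadj}), not an ``elementary'' input one may assume.
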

\begin{thm}\label{thm:mndlaxadj}
  The functor $\ADJ(\CATI)_{\lax} \to \MND(\CATI)_{\lax}$ taking an
  adjunction to its induced monad, has a fully faithful right adjoint
  with image the monadic adjunctions.
\end{thm}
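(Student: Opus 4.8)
The plan is to exhibit the monadic adjunctions as a \emph{reflective} full sub-$(\infty,2)$-category of $\ADJ(\CATI)_{\lax}$, and to identify the reflector followed by the resulting equivalence onto $\MND(\CATI)_{\lax}$ with the restriction functor $U\colon\ADJ(\CATI)_{\lax}\to\MND(\CATI)_{\lax}$ of the statement; the right adjoint will then be the corresponding fully faithful inclusion. Let $\iota\colon\mathcal{D}_{0}\hookrightarrow\ADJ(\CATI)_{\lax}$ be the inclusion of the full sub-$(\infty,2)$-category on the monadic adjunctions, which is legitimate since monadicity is a condition on objects. Given $A\in\ADJ(\CATI)_{\lax}$ with underlying right adjoint $r\colon\mathcal{A}\to\mathcal{C}$ (so $\ell\dashv r$) and induced monad $T=r\ell$ on $\mathcal{C}$, the monadicity comparison functor $\mathcal{A}\to\LMod_{T}(\mathcal{C})$ lies over $\mathcal{C}$, and both $r$ and the forgetful functor $\LMod_{T}(\mathcal{C})\to\mathcal{C}$ are right adjoints; by \cref{thm:adjlaxradj} this commuting square is a morphism $\eta_{A}\colon A\to\mathrm{EM}(T)$ of $\ADJ(\CATI)_{\lax}$, where $\mathrm{EM}(T)$ denotes the Eilenberg--Moore adjunction $\LMod_{T}(\mathcal{C})\rightleftarrows\mathcal{C}$. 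The first main step is to prove that $\eta_{A}$ is initial among morphisms from $A$ into $\mathcal{D}_{0}$, i.e.\ that precomposition with $\eta_{A}$ induces an equivalence of mapping $\infty$-categories $\mathrm{Map}_{\ADJ(\CATI)_{\lax}}(\mathrm{EM}(T),B)\isoto\mathrm{Map}_{\ADJ(\CATI)_{\lax}}(A,B)$ for every monadic adjunction $B$; this makes $\iota$ admit a left adjoint $L$ with $L(A)\simeq\mathrm{EM}(T)$ and unit $\eta$.

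The second main step is that $U$ restricts to an equivalence $U|_{\mathcal{D}_{0}}\colon\mathcal{D}_{0}\isoto\MND(\CATI)_{\lax}$. Essential surjectivity is the existence part of Lurie's monadicity theorem~\cite{HA}*{\S 4.7.3}: every monad $T$ on an \icat{} $\mathcal{C}$ is the induced monad of the monadic adjunction $\mathrm{EM}(T)$. Full faithfulness amounts, via \cref{thm:adjlaxradj}, to identifying for monads $(\mathcal{C},T)$ and $(\mathcal{D},S)$ the mapping $\infty$-category of monad functors $(\mathcal{C},T)\to(\mathcal{D},S)$ --- in Street's sense, these being the $1$-morphisms of $\MND(\CATI)_{\lax}$ --- with that of commuting squares of right adjoints from $\LMod_{T}(\mathcal{C})\to\mathcal{C}$ to $\LMod_{S}(\mathcal{D})\to\mathcal{D}$; this is the $(\infty,2)$-categorical upgrade of the classical bijection between monad functors and squares of forgetful functors, that is, the ``monad half'' of the identification announced in the abstract.

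Granting the two steps, the conclusion is formal. The comparison-functor square induces the identity on induced monads, so $U\eta\colon U\to U\iota L=U|_{\mathcal{D}_{0}}\circ L$ is a natural equivalence, whence $U\simeq U|_{\mathcal{D}_{0}}\circ L$. Composing the adjunction $L\dashv\iota$ with the adjoint equivalence $U|_{\mathcal{D}_{0}}\dashv(U|_{\mathcal{D}_{0}})^{-1}$ gives $U|_{\mathcal{D}_{0}}\circ L\dashv\iota\circ(U|_{\mathcal{D}_{0}})^{-1}$, so $U$ is left adjoint to $G:=\iota\circ(U|_{\mathcal{D}_{0}})^{-1}\colon\MND(\CATI)_{\lax}\to\ADJ(\CATI)_{\lax}$ --- the Eilenberg--Moore functor, sending a monad $T$ on $\mathcal{C}$ to $\LMod_{T}(\mathcal{C})\rightleftarrows\mathcal{C}$. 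As a composite of an equivalence with the inclusion of a full sub-$(\infty,2)$-category, $G$ is fully faithful with essential image $\mathcal{D}_{0}$, the monadic adjunctions.

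The real work is in the first two steps, which require promoting Lurie's monadicity theorem --- which as stated controls only the $\infty$-\emph{groupoid} of monads and the comparison functor on objects --- to a statement about the full mapping $\infty$-categories of $\ADJ(\CATI)_{\lax}$ and $\MND(\CATI)_{\lax}$, and in particular to one allowing \emph{lax}, not merely pseudo-natural, transformations. For pseudo transformations this coherence is the formal theory of homotopy-coherent monads of Riehl--Verity; extending it to the lax case, and reconciling it with the Gray-tensor description of $\FUN(-,-)_{\lax}$ furnished by \cref{ass:gray} and \S\ref{sec:gray}, is where the results of Zaganidis are essential. Once that input is in hand, the reflection-and-adjoint-equivalence bookkeeping of the last step is purely formal.
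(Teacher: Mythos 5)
Your formal skeleton at the end is fine (given a reflective inclusion of the monadic adjunctions and an equivalence of that subcategory with $\MND(\CATI)_{\lax}$, the stated adjunction follows by composing adjunctions), and you have correctly identified the external inputs (Riehl--Verity, Zaganidis, Lurie). But all of the mathematical content sits in your ``two main steps,'' and the proposal never actually proves them; it only asserts that they follow once Lurie's monadicity theorem is ``promoted'' to mapping $\infty$-categories with lax transformations, ``where the results of Zaganidis are essential.'' The concrete gap is that Zaganidis's theorem is not of the shape your steps need: as used in the paper (\cref{thm:Zaganidis}), it provides, for each $I \in \bTt$ separately, a fully faithful right adjoint to the restriction $\Fun(\fadj \otimes^{\lax} I, \mathcal{X}) \to \Fun(\fmnd \otimes^{\lax} I, \mathcal{X})$, with image the functors that are objectwise monadic. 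Passing from this family of levelwise adjunctions of $\infty$-categories to an adjunction of $(\infty,2)$-categories $\ADJ(\mathcal{X})_{\lax} \rightleftarrows \MND(\mathcal{X})_{\lax}$ (equivalently, to your claims about mapping $\infty$-categories and the universal property of the comparison map $\eta_{A}$) is exactly the nontrivial glue the paper supplies in \cref{lem:laxadj}: one needs the adjoints to be natural in $I$ with commuting mate squares, the characterization of adjoints in $\FUN(\bTt^{\op},\CATI)_{\lax}$ from \cref{propn:laxfunadj}, and an explicit construction of unit and counit 2-functors via the comparison maps $\mathcal{B}\otimes^{\lax}(I\times[1]) \to (\mathcal{B}\otimes^{\lax}I)\times[1]$. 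Your proposal contains no counterpart to this assembly step, and without it neither the claim that $\eta_{A}$ induces equivalences of mapping $\infty$-categories nor the construction of the reflection $L$ as a functor of $(\infty,2)$-categories is justified.

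There is also a circularity risk in your second step: you propose to prove full faithfulness of $U|_{\mathcal{D}_{0}}$ by ``identifying monad functors with commuting squares of right adjoints between the forgetful functors $\LMod_{T}(\mathcal{C})\to\mathcal{C}$,'' calling this the monad half of the identification announced in the abstract. But that identification, $\MND(\CATI)_{\lax} \simeq \FUN(C_{1},\CATI)_{\mndradj}$, is precisely the consequence of the theorem you are proving (combined with \cref{thm:adjlaxradj}/\cref{cor:ADJlaxeq}), so it cannot be invoked as an input; it has to be established independently, which again comes down to Zaganidis's levelwise statement plus the assembly lemma. In short: the paper's proof is ``apply \cref{lem:laxadj} to the adjunctions of \cref{thm:Zaganidis},'' and what your proposal is missing is any substitute for \cref{lem:laxadj}; the reflective-subcategory repackaging and the final bookkeeping are correct but are not where the difficulty lies.
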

We prove \cref{thm:adjlaxradj} in \S\ref{sec:adj}, using one of the
main results of \cite{RiehlVerityAdj}, which gives this equivalence on
the level of underlying \igpds{}. \cref{thm:mndlaxadj} is then proved
in \S\ref{sec:mnd} as a corollary of work of
Zaganidis~\cite{Zaganidis}, whose thesis studied lax morphisms of
adjunctions and monads in the framework of
\cite{RiehlVerityAdj}. Combining these two theorems we get an
equivalence of \itcats{}
\[ \MND(\CATI)_{\lax} \simeq \FUN(C_{1}, \CATI)_{\mndradj},\]
where the right-hand side is the \itcat{} of morphisms in
$\CATI$ that are monadic right adjoints. (More generally, we can
replace $\CATI$ by any \itcat{} that can be modelled by an
\emph{$\infty$-cosmos} in the sense of Riehl and Verity.)

We then turn to the relation between descriptions \ref{it:alg} and
\ref{it:mtf}. To see that these give the same objects it is enough to
observe that the one-object 2-category $\fmnd$ is the monoidal
envelope of the non-symmetric associative operad, but to relate the
morphisms we need to understand the connection between (co)lax
transformations and 2-morphisms of monoidal
\icats{}. More generally, if we view \itcats{} (in the guise of
complete 2-fold Segal spaces) as cocartesian
fibrations over $\Dop$, then for \itcats{} $\mathcal{X}$ and
$\mathcal{Y}$ we can define an \icat{} $\Nat(\mathcal{X}, \mathcal{Y})$
consisting of functors over $\Dop$ that preserve cocartesian morphisms
and natural transformations between them. In
\S\ref{sec:icon} we prove the following characterization of these \icats{}:
\begin{thm}
  There is a functor
  \[\Nat(\mathcal{X},\mathcal{Y}) \to
    \Fun(\mathcal{X},\mathcal{Y})_{\colax}\] that identifies the
  domain with the wide subcategory of the \icat{}
  $\Fun(\mathcal{X}, \mathcal{Y})_{\colax}$ underlying
  $\FUN(\mathcal{X}, \mathcal{Y})_{\colax}$ containing those colax
  transformations whose components are all equivalences.
\end{thm}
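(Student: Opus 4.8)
The plan is to prove this in the model of $(\infty,2)$-categories as complete $2$-fold Segal spaces, equivalently (by straightening) as cocartesian fibrations over $\Dop$, using the assumed properties of the Gray tensor product to make the colax functor $(\infty,2)$-category concrete. On the colax side, \cref{ass:gray} tells us that the $1$-morphisms of $\FUN(\mathcal{X},\mathcal{Y})_{\colax}$ — the colax transformations $F \Rightarrow G$ — are classified by functors of $(\infty,2)$-categories out of the Gray cylinder $\mathcal{X} \otimes C_{1}$ with prescribed restrictions to the two ends, and more generally the mapping $\infty$-categories of $\FUN(\mathcal{X},\mathcal{Y})_{\colax}$ are computed from mapping spaces out of Gray tensor products. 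On the $\Nat$ side, a cocartesian-morphism-preserving functor over $\Dop$ is exactly a functor of $(\infty,2)$-categories, and a morphism of $\Nat(\mathcal{X},\mathcal{Y})$ is a natural transformation over $\Dop$ between two such; this is classified by a functor over $\Dop$ out of the ``naive'' cylinder $\mathcal{X}' \times \Delta^{1}$, and one checks that because the two ends preserve cocartesian morphisms, any such functor is automatically a \emph{pseudo} (not merely lax) natural transformation of diagrams — its components are indexed by the objects of the levelwise $\mathcal{X}_{n} \times \{0,1\}$ and are invertible on each end — so that these cylinders carry no surprising data.

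I would then construct the comparison functor $\Nat(\mathcal{X},\mathcal{Y}) \to \Fun(\mathcal{X},\mathcal{Y})_{\colax}$ (the underlying $\infty$-category of $\FUN(\mathcal{X},\mathcal{Y})_{\colax}$). It is the identity on objects, so the task is to extract, naturally, a colax transformation from a $\Nat$-morphism. I would do this by writing $\mathcal{X}$ as a colimit of its cells, using that the Gray cylinder and the naive cylinder both commute with this colimit, and comparing the two cylinders cell by cell: on a $0$-cell the Gray cylinder is the unit $C_{1}$, while the $\Nat$-morphism's value on a $0$-cell is a morphism in the \emph{space} of objects of $\mathcal{Y}$ — a path, hence an equivalence — which maps to the corresponding equivalence $1$-morphism via the (by completeness) fully faithful degeneracy; on higher cells the naive-cylinder data is part of the colax-transformation data. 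Assembling these comparisons over $\Dop$ and checking the coherences, which by the Segal condition reduce to compatibility with composition, produces the functor $\Phi$.

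It remains to show $\Phi$ is faithful with essential image the colax transformations whose components are equivalences. For this I would restrict everything along the inclusion $\iota_{0}\colon \mathcal{X}_{0} \hookrightarrow \mathcal{X}$ of the underlying $\infty$-groupoid of objects. Since $\mathcal{X}_{0}$ is an $\infty$-groupoid, its Gray cylinder and its naive cylinder agree (by \cref{ass:gray}), so colax and strict transformations between functors out of $\mathcal{X}_{0}$ coincide; restriction along $\iota_{0}$ thus gives compatible functors from $\Fun(\mathcal{X},\mathcal{Y})_{\colax}$ and from $\Nat(\mathcal{X},\mathcal{Y})$ to $\Fun(\mathcal{X}_{0},\mathcal{Y})$, each recording the family of components of a transformation, and on the $\Nat$ side this lands in the maximal subgroupoid $\Fun(\mathcal{X}_{0},\mathcal{Y})^{\simeq}$ by the ``path in the space of objects'' observation. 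The claim is then that for all $F,G$ the square
\[
  \begin{tikzcd}
    \mathrm{Map}_{\Nat(\mathcal{X},\mathcal{Y})}(F,G) \arrow{r}{\Phi} \arrow{d} & \mathrm{Map}_{\Fun(\mathcal{X},\mathcal{Y})_{\colax}}(F,G) \arrow{d} \\
    \mathrm{Map}_{\Fun(\mathcal{X}_{0},\mathcal{Y})^{\simeq}}(\iota_{0}^{*}F, \iota_{0}^{*}G) \arrow[hook]{r} & \mathrm{Map}_{\Fun(\mathcal{X}_{0},\mathcal{Y})}(\iota_{0}^{*}F, \iota_{0}^{*}G)
  \end{tikzcd}
\]
is cartesian. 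Since the lower map is a monomorphism, this gives at once that $\Phi$ is faithful and that on each mapping space its image consists of exactly those colax transformations whose restriction to objects is an equivalence — which is the assertion, once one notes that a natural transformation between functors out of an $\infty$-groupoid is an equivalence iff it is pointwise so.

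I expect the main obstacle to be precisely this last step — that the square is cartesian, equivalently that a colax transformation is determined, relative to its restriction to objects, by ``natural-transformation-over-$\Dop$'' worth of data. This is where the formal properties in \cref{ass:gray} do not suffice and one must genuinely understand the Gray cylinder $\mathcal{X} \otimes C_{1}$: concretely, its behaviour on the cells of $\mathcal{X}$ — above all that on a $0$-cell it is just $C_{1}$ — which is what forces the components of the colax transformations in the image to be equivalences and pins $\Phi$ down to an equivalence onto the claimed wide subcategory.
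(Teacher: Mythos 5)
Your strategy is the one the paper itself follows---compare the Gray cylinder $\mathcal{X}\otimes^{\colax}\Delta^{1}$ with the cylinder corepresenting morphisms of $\Nat$, reduce to cells of $\bTt$ and $\simp$ using that both constructions preserve colimits, and deduce the identification from a cartesian square of mapping spaces recording components at objects---but as written the proposal leaves the two substantive steps unproved. The first is the construction of the comparison functor itself: ``comparing the two cylinders cell by cell and checking the coherences'' requires a map $\mathcal{X}\otimes^{\colax}\mathcal{C}\to\mathcal{X}\odot\mathcal{C}$ (in the paper's notation for the $\CatI$-tensoring that corepresents $\Nat$) which is natural in both variables as a transformation of bifunctors on $\bTt\times\simp$ before extending by colimits, and producing that infinite tower of naturality data is exactly where the work lies; an objectwise prescription together with ``compatibility with composition'' does not determine it. The paper circumvents this by interposing an explicit strict 2-category $\Phi(I,[m])=[k]([n_{1}]\times[m],\ldots,[n_{k}]\times[m])$ (\cref{defn:Phi}) and writing down strictly natural 2-functors $\eta_{I,m}\colon I\odot[m]\to\Phi(I,[m])$ (an equivalence by \cref{propn:Phieq}) and $\nu_{I,m}\colon I\otimes^{\colax}[m]\to\Phi(I,[m])$, so that all coherence is checked by hand at the strict level; your argument needs some such device or rigidity input.

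The second, and larger, gap is the one you flag yourself without closing: the cartesian square at the end is the entire content of the theorem. In the paper it follows from \cref{propn:odotpo}, which asserts that the square with corners $\iota_{0}\mathcal{X}\times\mathcal{C}$, $\mathcal{X}\otimes^{\colax}\mathcal{C}$, $\iota_{0}\mathcal{X}\times\|\mathcal{C}\|$, $\mathcal{X}\odot\mathcal{C}$ is a pushout of \itcats{}; mapping into $\mathcal{Y}$ and letting the test object $\mathcal{C}$ vary then gives the pullback square of the theorem by the Yoneda lemma. The pushout is proved by reducing (via the co-Segal conditions and colimit-preservation in each variable) to $I\in\{C_{0},C_{1},C_{2}\}$ and $[m]\in\{[0],[1]\}$, where one needs the decompositions of $C_{1}\otimes^{\colax}C_{1}$ and $C_{2}\otimes^{\colax}C_{1}$ from \cref{lem:graytenscolim} together with the computations $C_{1}\odot[1]\simeq C_{2}$ and $C_{2}\odot[1]\simeq[1]([1]\times[1])$; as you suspect, none of this follows formally from \cref{ass:gray} alone. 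One further correction to your set-up: for an \igpd{} source the two cylinders do \emph{not} agree---$\iota_{0}\mathcal{X}\otimes^{\colax}\mathcal{C}\simeq\iota_{0}\mathcal{X}\times\mathcal{C}$ by \cref{rmk:colaxigpd}, whereas $\iota_{0}\mathcal{X}\odot\mathcal{C}\simeq\iota_{0}\mathcal{X}\times\|\mathcal{C}\|$---and it is precisely this discrepancy between $\mathcal{C}$ and $\|\mathcal{C}\|$ that forces the components of a $\Nat$-morphism to be equivalences and is what the pushout square encodes. So the route is right, but until the pushout (or an equivalent explicit analysis of the Gray cylinder on cells) is actually established, the identification with the wide subcategory is asserted rather than proved.
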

The colax transformations in this subcategory are an \itcatl{}
analogue of the ``icons'' of Lack~\cite{Lack:icons}. Combining this result with the
non-symmetric analogues of the results on (symmetric) monoidal
envelopes of \iopds{} from \cite{HA}*{\S 2.2.4}, we obtain the
following comparison of descriptions \ref{it:alg} and
\ref{it:mtf} in \S\ref{sec:mndalg}:
\begin{thm}
  For any object $X$ of an \itcat{} $\mathcal{X}$, there is an
  equivalence of \icats{}
  \[ \Alg(\End_{\mathcal{X}}(X)) \isoto
    \Mnd(\mathcal{X})_{\colax,X} \]
  between the \icat{} of associative algebras in the monoidal \icat{} of
  endomorphisms of $X$ under composition, and the fibre at $X$ of the
  underlying \icat{} $\Mnd(\mathcal{X})_{\colax}$ of $\MND(\mathcal{X})_{\colax}$.
\end{thm}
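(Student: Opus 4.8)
The plan is to combine the theorem above --- characterizing the colax transformations whose components are all equivalences --- with two facts recalled in the introduction: that the one-object \itcat{} $\fmnd$ is the monoidal envelope of the nonsymmetric associative operad, and that the nonsymmetric analogue of the monoidal-envelope machinery of \cite{HA}*{\S 2.2.4} identifies the associative algebras in a monoidal \icat{} $\mathcal{C}$ with the monoidal functors (and monoidal natural transformations) from that envelope into $\mathcal{C}$. Concretely, I will (i) use evaluation at the unique object of $\fmnd$ to exhibit $\Mnd(\mathcal{X})_{\colax,X}$ as a fibre of $\Nat(\fmnd,\mathcal{X})$; (ii) unwind the definition of $\Nat$ in terms of cocartesian fibrations over $\Dop$ to identify that fibre with the \icat{} $\Fun^{\otimes}(\fmnd,\End_{\mathcal{X}}(X))$ of monoidal functors $\fmnd\to\End_{\mathcal{X}}(X)$ and monoidal natural transformations; and (iii) apply the envelope theorem to recognize this as $\Alg(\End_{\mathcal{X}}(X))$.

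For step (i): since $\pt$ is the monoidal unit for the Gray tensor product, $\FUN(\pt,\mathcal{X})_{\colax}\simeq\mathcal{X}$, so evaluation at the object of $\fmnd$ gives a functor of \itcats{} $\MND(\mathcal{X})_{\colax}\to\mathcal{X}$ and hence, on underlying \icats{}, a functor from $\Mnd(\mathcal{X})_{\colax}$ to the underlying \icat{} of $\mathcal{X}$; by definition $\Mnd(\mathcal{X})_{\colax,X}$ is its fibre over $X$. A morphism in this fibre is a colax transformation between monads on $X$ lying over $\mathrm{id}_{X}$; since $\fmnd$ has a single object, its one component is then equivalent to $\mathrm{id}_{X}$, in particular an equivalence. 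By the theorem above these are precisely the morphisms of $\Nat(\fmnd,\mathcal{X})$, and --- the comparison functor there being compatible with evaluation at objects --- $\Mnd(\mathcal{X})_{\colax,X}$ is equivalent to the fibre over $X$ of the analogous evaluation functor from $\Nat(\fmnd,\mathcal{X})$ to the underlying \icat{} of $\mathcal{X}$.

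For steps (ii)--(iii): viewed as a cocartesian fibration over $\Dop$, $\fmnd$ corresponds precisely to the monoidal \icat{} obtained from $\fmnd$ as a one-object $2$-category, namely its unique Hom-\icat{} with composition as tensor product; and the $2$-fold Segal structure of $\mathcal{X}$ supplies, for each object $X$, the monoidal \icat{} $\End_{\mathcal{X}}(X)$ of endomorphisms under composition. A cocartesian-morphism-preserving functor $\fmnd\to\mathcal{X}$ over $\Dop$ whose restriction to the fibre over $[0]$ picks out $X$ then corresponds, by the Segal conditions on both sides, exactly to a monoidal functor $\fmnd\to\End_{\mathcal{X}}(X)$, and a natural transformation of such functors over $\Dop$ restricts to a monoidal natural transformation; hence the fibre over $X$ of $\Nat(\fmnd,\mathcal{X})$ is $\Fun^{\otimes}(\fmnd,\End_{\mathcal{X}}(X))$. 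Since $\fmnd$, as a monoidal \icat{}, is the monoidal envelope of the nonsymmetric associative operad, the nonsymmetric analogue of \cite{HA}*{\S 2.2.4} identifies $\Fun^{\otimes}(\fmnd,\End_{\mathcal{X}}(X))$ with $\Alg(\End_{\mathcal{X}}(X))$. Chaining the equivalences yields the asserted comparison, which sends a monad $\fmnd\to\mathcal{X}$ at $X$ to its associated algebra and a colax transformation lying over $\mathrm{id}_{X}$ to the corresponding morphism of algebras.

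The main obstacle is step (ii): translating between the ``simplicial \icats{}'' model in which $\Nat$ is defined and the language of monoidal \icats{}, while tracking the completeness and Segal conditions on both $\fmnd$ and $\mathcal{X}$, extracting $\End_{\mathcal{X}}(X)$ as a monoidal \icat{} (equivalently, a cocartesian fibration over $\Dop$) from the $2$-fold Segal structure of $\mathcal{X}$, and verifying that a natural transformation of cocartesian functors over $\Dop$ genuinely restricts to a \emph{monoidal} natural transformation rather than merely a natural transformation of underlying functors. Two lesser points also need attention: fixing the variance conventions so that colax --- rather than lax --- transformations correspond to morphisms of algebras, and recording that the envelope equivalence of \cite{HA}*{\S 2.2.4}, established there for symmetric \iopds{}, carries over verbatim to the nonsymmetric setting.
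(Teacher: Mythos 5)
Your plan is correct and is, at bottom, the same strategy the paper follows (and announces in its introduction): use the icon theorem to identify $\Mnd(\mathcal{X})_{\colax,X}$ with the fibre of $\Nat(\fmnd,\mathcal{X})$ at $X$, recognize $\fmnd$ as the (monoidal) envelope of the associative operad, and invoke the envelope universal property to produce algebras. The difference is in the order of operations and the formalism. You pass to the fibre first, extracting the monoidal \icat{} $\End_{\mathcal{X}}(X)=\mathcal{X}^{\otimes}_{X}$, and then apply the non-symmetric analogue of \cite{HA}*{\S 2.2.4} to monoidal functors $\fmnd\to\End_{\mathcal{X}}(X)$; the paper instead works globally with generalized non-symmetric \iopds{} and the \emph{double} envelope of \cite{nmorita}: it shows $\intDop\fmnd\simeq\txt{Env}(\Dop)$ and deduces $\Nat(\fmnd,\mathcal{X})\simeq\Alg_{\Dop}(\intDop\mathcal{X})$ for the whole double \icat{} $\intDop\mathcal{X}$, taking the fibre at $X$ only at the end. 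This ordering is precisely what disposes of the two points you flag as the main obstacles: the identification of morphisms in $\Nat$ with morphisms of algebras (your ``monoidal natural transformations'') is automatic because $\Alg_{\mathcal{O}}(\mathcal{P})$ is defined via $\Map(\mathcal{O}\times\Delta^{\bullet},\mathcal{P})$ and $\txt{Env}(\mathcal{O}\times\Delta^{n})\simeq\txt{Env}(\mathcal{O})\times\Delta^{n}$, matching the tensoring used to define $\Nat$; and the coherent factorization of a functor ``landing at $X$'' through $\mathcal{X}^{\otimes}_{X}$ is handled by observing that $\Alg_{\mathcal{O}}(\blank)$ preserves limits, so the fibre is literally a pullback square $\Alg_{\mathcal{O}}(\mathcal{M}^{\otimes}_{X})\simeq\Alg_{\mathcal{O}}(\mathcal{M})\times_{\Fun(\mathcal{O}_{0},\mathcal{M}_{0})}\{X\}$. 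Your fibrewise route works, but you would have to re-derive these two facts by hand (and justify the non-symmetric envelope theorem you quote), which is essentially reconstructing what the double-envelope adjunction gives for free; the paper's global argument buys those coherences, while yours stays closer to the familiar language of monoidal \icats{} at the cost of more verification in your step (ii).
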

This equivalence is compatible with the forgetful functors to
endomorphisms of $X$. Replacing lax by colax morphisms, we also obtain
an equivalence between $\Alg(\End_{\mathcal{X}}(X))^{\op}$ and
$\Mnd(\mathcal{X})_{\lax,X}$ and so combined with our first comparison
we obtain for $\mathcal{C}$ an \icat{} equivalences
\[ \Alg(\Fun(\mathcal{C}, \mathcal{C}))^{\op} \simeq
  \Mnd(\CATI)_{\lax, \mathcal{C}} \simeq
  \Cat_{\infty/\mathcal{C}}^{\mndradj} \]
where the right-hand side is the full subcategory of
$\Cat_{\infty/\mathcal{C}}$ spanned by the monadic right adjoints.

\subsection*{Acknowledgments}
This paper began as a revision of the appendix of \cite{polynomial},
and I thank David Gepner and Joachim Kock for a fruitful and rewarding
collaboration. I especially thank Joachim for extensive discussions on
the subject of this paper, particularly Gray tensor products and lax
transformations. I also thank Alexander Campbell for supplying some
2-categorical references.

This paper was begun while the author was employed by the IBS Center
for Geometry and Physics in Pohang, in a position funded by grant
IBS-R003-D1 of the Institute for Basic Science of the Republic of
Korea; it was completed while the author was in residence at the
Matematical Sciences Research Institute in Berkeley, California,
during the Spring 2020 semester, and is thereby based upon work
supported by the National Science Foundation under grant DMS-1440140.

\section{$(\infty,2)$-Categories}\label{subsec:itcat}
In this section we fix some notation for various structures related to
\icats{}, and briefly review the different descriptions of \itcats{}
we make use of; we also make a few simple \itcatl{} observations that will be
useful later on.

\begin{notation}
  We write $\mathcal{S}$ for the \icat{} of spaces (or \igpds{}),
  $\CatI$ for the \icat{} of \icats{}, and $\CatIT$ for the \icat{} of
  \itcats{}.
\end{notation}

\begin{notation}
  If $\mathcal{C}$ is an \icat{}, we write $\mathcal{C}^{\simeq}$ for
  the underlying \igpd{} of $\mathcal{C}$, which is the value at
  $\mathcal{C}$ of the right adjoint to the inclusion $\mathcal{S}
  \hookrightarrow \CatI$. This inclusion also has a left adjoint,
  which takes the \icat{} $\mathcal{C}$ to the \igpd{} obtained by
  inverting all morphisms in $\mathcal{C}$, which we denote by $\| \mathcal{C}\|$.
\end{notation}

The \icat{} $\CatIT$ admits several useful descriptions; in particular, we can view
$(\infty,2)$-categories
\begin{itemize}
\item as complete 2-fold Segal spaces \cite{BarwickThesis},
\item as complete Segal $\bbTheta_{2}$-spaces \cite{RezkThetaN},
\item as certain simplicial objects in $\CatI$
  \cite{LurieGoodwillie},
\item or as \icats{} enriched in $\CatI$ \cite{enr}.
\end{itemize}
The first three of these descriptions
are related through the following commutative diagram, where all
functors except the lower right one are fully faithful:
\begin{equation}
  \label{eq:segdiag}
  \begin{tikzcd}
{} &
    \Seg_{\Dop}^{\mathcal{S}}(\CatI) \arrow[hookrightarrow]{r}
    \arrow[hookrightarrow]{d} & \Seg_{\Dop}(\CatI)
    \arrow[hookrightarrow]{r}  \arrow[hookrightarrow]{d} & \Fun(\Dop,
    \CatI) \arrow[hookrightarrow]{d} \\
    \CatIT \arrow[hookrightarrow]{ur}  \arrow[hookrightarrow]{dr}
    \arrow[hookrightarrow]{r}
    & \Seg^{\txt{2-fold}}_{\Dop \times \Dop}(\mathcal{S})
    \arrow[hookrightarrow]{r}  & \Seg_{\Dop \times \Dop}(\mathcal{S})
    \arrow[hookrightarrow]{r} & \Fun(\Dop \times \Dop, \mathcal{S})\\
    & \Seg_{\bbTheta_{2}^{\op}}(\mathcal{S})
    \arrow[hookrightarrow]{rr} \arrow{u}{\sim} & & 
    \Fun(\bTt^{\op}, \mathcal{S}) \arrow{u}{\tau^{*}}.
  \end{tikzcd}
\end{equation}
We now describe the \icats{} and functors that appear in this diagram:
\begin{defn}
  We write $\simp$ for the usual simplex category, consisting of the
  ordered sets $[n] := \{0,\ldots,n\}$ and order-preserving maps
  between them. A morphism $\phi \colon [m] \to [n]$ in $\simp$ is
  called \emph{inert} if it is the inclusion of a subinterval, \ie{}
  if $\phi(i) = \phi(0)+i$ for $i = 0,\ldots,m$, and \emph{active} if
  it preserves the end points, \ie{} $\phi(0) = 0$ and $\phi(m) = n$.
\end{defn}

\begin{defn}
  For an \icat{} $\mathcal{C}$ with finite limits,
  $\Seg_{\Dop}(\mathcal{C})$ denotes the full subcategory of
  $\Fun(\Dop, \mathcal{C})$ consisting of functors
  $X \colon \Dop \to \mathcal{C}$
  satisfying the \emph{Segal condition}, meaning that the natural map
  \[ X_{n} \to X_{1} \times_{X_{0}} \cdots \times_{X_{0}} X_{1},\]
  induced by the inert maps $[0],[1] \to [n]$ in $\simp$,
  is an equivalence for all $n$. We  also write $\Seg_{\Dop
    \times \Dop}(\mathcal{C})$ for the full subcategory
  $\Seg_{\Dop}(\Seg_{\Dop}(\mathcal{C}))$ of $\Fun(\Dop, \Fun(\Dop,
  \mathcal{C})) \simeq \Fun(\Dop \times \Dop, \mathcal{C})$,
  consisting of functors $\Dop \times \Dop \to \mathcal{C}$ that
  satisfy the Segal condition in each variable.
\end{defn}

\begin{defn}
  $\Seg_{\Dop}^{\mathcal{S}}(\CatI)$ denotes the full subcategory
  of $\Seg_{\Dop}(\CatI)$ consisting of Segal objects $X$ such that
  $X_{0}$ is an $\infty$-groupoid. We can then define
  $\Cat_{(\infty,2)}$ to be the full subcategory of
  $\Seg_{\Dop}^{\mathcal{S}}(\CatI)$ consisting of functors $X$ satisfying the
  \emph{completeness condition}, namely that the underlying Segal space
  $X^{\simeq}$ is complete in the sense of \cite{RezkCSS}.
\end{defn}

\begin{defn}
  $\Seg_{\Dop \times \Dop}^{\txt{2-fold}}(\mathcal{S})$ denotes
  the full subcategory of $\Seg_{\Dop \times \Dop}(\mathcal{S})$
  consisting of \emph{2-fold Segal spaces}, meaning those
  objects $X$ such that $X_{0,\bullet}\colon \Dop \to \mathcal{S}$ is
  constant. 
\end{defn}

\begin{remark}
  The top right vertical morphism in \cref{eq:segdiag} arises from the
  inclusion $\CatI \hookrightarrow \Seg_{\Dop}(\mathcal{S})$ of
  \icats{} as the complete Segal objects, due to
  Rezk~\cite{RezkCSS}. This also induces the other inclusions between
  the top two rows, and identifies $\CatIT$ with the full subcategory
  of $\Seg_{\Dop \times \Dop}^{\txt{2-fold}}(\mathcal{S})$ consisting
  of the complete 2-fold Segal spaces in the sense of
  Barwick~\cite{BarwickThesis}.
\end{remark}

\begin{defn}
  The category $\bTt$ has objects $[k](n_{1},\ldots,n_{k})$ for
  non-negative integers $k, n_{1},\ldots,n_{k}$, with a morphism
  $[k](n_{1},\ldots,n_{k}) \to [l](m_{1},\ldots,m_{l})$ given by a
  morphism $\phi \colon [k] \to [l]$ in $\simp$ together with a
  morphism $\psi_{ij} \colon [n_{i}] \to [m_{j}]$ in $\simp$ whenever
  $\phi(i-1) < j \leq \phi(i)$. Composition is defined in the obvious
  way, and we say this morphism is \emph{inert} or \emph{active} if $\phi$ and each of
  the maps $\psi_{ij}$ is inert or active, respectively.
\end{defn}

\begin{remark}
We can think of the objects of $\bTt$ as globular pasting diagrams, such as
\[
\begin{tikzcd}
  \bullet 
  \ar[r, bend left=75, ""{below,name=A,inner sep=0.5pt}] 
  \ar[r, bend left=25, ""{name=B1,inner sep=0.5pt},
  ""{name=B2,below,inner sep=1pt}] 
  \ar[r, bend right=25, ""{name=C1,inner sep=1pt}, ""{below,name=C2,inner sep=0.5pt}] 
  \ar[r, bend right=75, ""{name=D,inner sep=1pt}] &
  \bullet 
  \ar[r]  &
  \bullet 
  \ar[r, bend left=25, ""{below,name=E,inner sep=1pt}] 
  \ar[r, bend right=25, ""{name=F,inner sep=1pt}] &
  \bullet 
  \ar[r, bend left=50, ""{below,name=G,inner sep=1pt}] 
  \ar[r, ""{name=H1,inner sep=1pt}, ""{name=H2,below,inner sep=1pt}] 
  \ar[r, bend right=50, ""{name=I,inner sep=1pt}] &
  \bullet
  \arrow[from=A,to=B1,Rightarrow]
  \arrow[from=B2,to=C1,Rightarrow]
  \arrow[from=C2,to=D,Rightarrow]
  \arrow[from=E,to=F,Rightarrow]
  \arrow[from=G,to=H1,Rightarrow]
  \arrow[from=H2,to=I,Rightarrow]
\end{tikzcd}
\]
which corresponds to the object $[4](3,0,1,2)$. This leads to the
equivalent definition of $\bTt$ as a full subcategory of the category of strict
2-categories, by thinking of the object $[k](n_1,\ldots,n_k)$ as the strict 2-category with objects
$0, \ldots, n$ whose category of morphisms $i \to j$ is
$\prod_{i < k \leq j} [n_{i}]$ if $i \leq j$ and empty otherwise, and with
composition given by taking products.
\end{remark}

\begin{notation}
We shall use the following special notation for the most basic objects of $\bbTheta_2$:
\begin{align*}
  C_0 &:= [0](),  \\
  C_1 &:= [1](0),  \\
  C_2 &:= [1](1).
\end{align*}
They can be pictured, respectively, as
\[
\begin{tikzcd}
  \bullet,
  \end{tikzcd}
  \qquad\qquad
\begin{tikzcd}
  \bullet \ar[r] & \bullet,
  \end{tikzcd}
  \qquad\qquad
\begin{tikzcd}
  \bullet 
  \ar[r, bend left=40, ""{below,name=E,inner sep=2pt}] 
  \ar[r, bend right=40, ""{name=F,inner sep=2pt}] &
  \bullet . 
  \arrow[from=E,to=F,Rightarrow]
  \end{tikzcd}
\]
We refer to the object $C_{n}$ as the \emph{$n$-cell}; it is the
generic 2-category containing an $n$-morphism.
\end{notation}

\begin{defn}
  $\Seg_{\bTt^{\op}}(\mathcal{S})$ denotes the full subcategory of
  $\Fun(\bTt^{\op}, \mathcal{S})$ consisting of functors $X$ that
  satisfy the following pair of Segal conditions:
  \begin{itemize}
  \item for every object $[k](n_{1},\ldots,n_{k})$, the morphism
    \[ X([k](n_{1},\ldots,n_{k})) \to X([1](n_{1})) \times_{X(C_{0})}
      \cdots \times_{X(C_{0})} X([1](n_{k}))\]
    is an equivalence,
  \item for every object $[1](n)$, the morphism
    \[ X([1](n)) \to X(C_{2}) \times_{X(C_{1})} \cdots
      \times_{X(C_{1})} X(C_{2})\]
    is an equivalence.
  \end{itemize}
\end{defn}

\begin{remark}
  The bottom right vertical morphism in \cref{eq:segdiag} is given by composition with
  the functor $\tau \colon \simp \times \simp \to \bTt$, given on
  objects by $([k], [n]) \mapsto [k]([n],\ldots,[n])$. This restricts
  to an equivalence between $\Seg_{\bTt^{\op}}(\mathcal{S})$ and
  $\Seg_{\Dop \times \Dop}^{\txt{2-fold}}(\mathcal{S})$ and
  furthermore identifies $\CatIT$ with the full subcategory of
  \emph{complete} objects in $\Seg_{\bTt^{\op}}(\mathcal{S})$ in the
  sense of Rezk~\cite{RezkThetaN}; this comparison was first proved by
  Barwick and Schommer-Pries~\cite{BarwickSchommerPriesUnicity} and in
  different ways by Bergner and Rezk~\cite{BergnerRezk2} and the
  author \cite{thetan}.  
\end{remark}

\begin{notation} We introduce some notation for various structures
  related to \itcats{}:
  \begin{enumerate}[(i)]
  \item If $\mathcal{X}$ is an \itcat{}, we write
    $\iota_{1}\mathcal{X}$ for
    the underlying \icat{} of $\mathcal{X}$, and $\iota_{0}\mathcal{X}$
    for the underlying \igpd{}. If we view $\mathcal{X}$ as an object 
    $X_{\bullet} \in \Seg^{\mathcal{S}}_{\Dop}(\CatI)$, then $\iota_{1}\mathcal{X}$
    is the complete Segal space obtained by taking the underlying
    $\infty$-groupoid levelwise, \ie{} $X^{\simeq}_{\bullet}$, while
    $\iota_{0}\mathcal{X}$ is the $\infty$-groupoid $X_{0}$.
  \item If $\mathcal{X}$ is an \itcat{} and $x,y$ are objects of
    $\mathcal{X}$ then we write $\mathcal{X}(x,y)$ for the \icat{} of
    morphisms from $x$ to $y$ in $\mathcal{X}$. If we view $\mathcal{X}$
    as a simplicial \icat{} $X$, then this is given by the pullback
    square
    \[
      \begin{tikzcd}
        \mathcal{X}(x,y) \arrow{r} \arrow{d} & X_{1} \arrow{d} \\
        \{(x,y)\} \arrow{r} & X_{0} \times X_{0},
      \end{tikzcd}
      \]
    where the right vertical map is the functor induced by the two maps
    $[0] \to [1]$.
  \item If $\mathcal{X}$ and $\mathcal{Y}$ are \itcats{}, we write
    $\FUN(\mathcal{X}, \mathcal{Y})$ for the \itcat{} of functors
    between them, \ie{} the internal Hom in $\CatIT$, and
    $\Fun(\mathcal{X}, \mathcal{Y}) := \iota_{1}\FUN(\mathcal{X},
    \mathcal{Y})$ for its underlying \icat{}.
  \item If $\mathcal{X}$ is an \itcat{}, we write $\mathcal{X}^{\onop}$
    for the \itcat{} obtained from $\mathcal{X}$ by reversing the
    1-morphisms, and $\mathcal{X}^{\twop}$ for that obtained by
    reversing the 2-morphisms. If $\mathcal{X}$ is represented by a
    simplicial \icat{} $X_{\bullet}$ then $\mathcal{X}^{\twop}$
    corresponds to taking $\op$ levelwise to obtain $X_{\bullet}^{\op}$,
    while $\mathcal{X}^{\onop}$ is obtained by composing $X_{\bullet}$
    with the order-reversing involution of $\simp$.
  \end{enumerate}
\end{notation}

\begin{remark}
Another description of \itcats{} is that
they are precisely \icats{} \emph{enriched} in the symmetric monoidal
\icat{} $\CatI$, in the sense of \cite{enr}. This definition is shown
in \cite{enrcomp}
to be equivalent to \itcats{} viewed as complete objects in
$\Seg^{\mathcal{S}}_{\Dop}(\CatI)$, and hence is also equivalent to
the other definitions we have considered thus far; the comparison also
extends to an
equivalence between $\Seg_{\Dop}^{\mathcal{S}}(\CatI)$ and
\emph{categorical algebras} in $\CatI$, defined in \cite{enr} as
algebras for a family of (generalized non-symmetric) \iopds{}
$\Dop_{X}$. This allows us to construct certain \itcats{} as
\emph{free} algebras for these \iopds{}, as we will now explain:
\end{remark}

\begin{defn}
  A \emph{$\CatI$-graph} on a space $X$ is a functor $X \times X \to
  \CatI$; using the obvious naturality in $X$, these combine into an
  \icat{} $\txt{Graph}(\CatI)$. This can equivalently be viewed as the
  \icat{} $\Fun^{\mathcal{S}}(\simp^{\txt{el},\op}, \CatI)$
  where $\simp^{\txt{el}}$ is the subcategory of $\simp$ containing
  the objects $[0], [1]$ and the two inert maps $d_{0},d_{1} \colon
  [0] \to [1]$, and $\Fun^{\mathcal{S}}(\simp^{\txt{el},\op}, \CatI)$
  is the full subcategory of $\Fun(\simp^{\txt{el},\op}, \CatI)$
  consisting of functors $\Phi$ such that $\Phi_{0} \in
  \mathcal{S}$. The forgetful functor from categorical algebras to
  graphs then corresponds to the functor
  $\Seg_{\Dop}^{\mathcal{S}}(\CatI) \to \txt{Graph}(\CatI)$ induced by
  composition with the inclusion $\simp^{\txt{el}} \to \simp$. This
  has a left adjoint $\txt{Free} \colon \txt{Graph}(\CatI) \to
  \Seg_{\Dop}^{\mathcal{S}}(\CatI)$, which can be described by an
  explicit formula (as it is given by free algebras for a family of \iopds{}).
\end{defn}

\begin{defn}\label{defn:[n]free}
  In particular, given \icats{}
  $\mathcal{C}_{1},\ldots,\mathcal{C}_{n}$ we can define a
  $\CatI$-graph \[[n](\mathcal{C}_{1},\ldots,\mathcal{C}_{n})_{\txt{graph}}\] on the set $\{0,\ldots,n\}$ by
  \[ (i,j) \mapsto
    \begin{cases}
      \mathcal{C}_{j}, &  i = j-1,\\
      \emptyset, & \txt{otherwise}.
    \end{cases}
  \]
  We write $[n](\mathcal{C}_{1},\ldots,\mathcal{C}_{n})$ for the free
  \itcat{} on
  $[n](\mathcal{C}_{1},\ldots,\mathcal{C}_{n})_{\txt{graph}}$. The
  formula for free algebras implies that this \itcat{} has objects
  $0,\ldots,n$, and the \icats{} of maps are given by
  \[ [n](\mathcal{C}_{1},\ldots,\mathcal{C}_{n})(i,j) \simeq
    \begin{cases}
      \mathcal{C}_{i+1} \times \cdots \times \mathcal{C}_{j}, & i \leq
      j,\\
      \emptyset, & i > j,
    \end{cases}
  \]
  with composition given by the obvious equivalence
  \[ [n](\mathcal{C}_{1},\ldots,\mathcal{C}_{n})(i,j) \times
    [n](\mathcal{C}_{1},\ldots,\mathcal{C}_{n})(j,k) \isoto
    [n](\mathcal{C}_{1},\ldots,\mathcal{C}_{n})(i,k).\] Note that any
  inert map $\phi \colon [m] \to [n]$ in $\simp$ induces a fully
  faithful functor
  \[ \bar{\phi} \colon
    [m](\mathcal{C}_{\phi(1)},\ldots,\mathcal{C}_{\phi(m)}) \to
    [n](\mathcal{C}_{1},\ldots,\mathcal{C}_{n}),\] as the free functor
  on the inclusion of graphs determined by $\phi$.
\end{defn}

\begin{remark}
  In particular, we have a functor $[1](\blank) \colon \CatI \to
  \CatIT$ with two natural morphisms $[0] \to [1](\blank)$. From the
  free-forgetful adjunction for graphs, we see that for any \itcat{}
  $\mathcal{X}$ the fibre of
   \[ \Map([1](\mathcal{C}), \mathcal{X}) \to \Map([0], \mathcal{X})^{\times
       2} \]
   at objects $x,y \in \mathcal{X}$ is naturally equivalent to $\Map_{\CatI}(\mathcal{C},
   \mathcal{X}(x,y))$.
\end{remark}

We can use these free \itcats{} to describe some colimits of \itcats{}
that will be useful later on:
\begin{lemma}\label{lem:[n]Segal}
  For any \icats{} $\mathcal{C}_{1},\ldots,\mathcal{C}_{n}$, the
  functor
  \[ [1](\mathcal{C}_{1}) \amalg_{[0]} \cdots \amalg_{[0]}
    [1](\mathcal{C}_{n}) \to
    [n](\mathcal{C}_{1},\ldots,\mathcal{C}_{n}),\]
  induced by the inert maps $[0],[1] \to [n]$, is an equivalence.
\end{lemma}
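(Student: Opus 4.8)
The plan is to prove this by reducing to the universal property of free algebras over the $\infty$-operads $\Dop_X$, which is where the description of $[n](\mathcal{C}_1,\ldots,\mathcal{C}_n)$ in \cref{defn:[n]free} comes from. First I would observe that both sides are obtained by applying the free functor $\txt{Free}\colon \txt{Graph}(\CatI) \to \Seg^{\mathcal{S}}_{\Dop}(\CatI)$ to an appropriate $\CatI$-graph or diagram of $\CatI$-graphs: the target is $\txt{Free}$ applied to $[n](\mathcal{C}_1,\ldots,\mathcal{C}_n)_{\txt{graph}}$, while the source, being a colimit, is the colimit of the $\txt{Free}([1](\mathcal{C}_i)_{\txt{graph}})$ along the maps induced by $[0]$. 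Since $\txt{Free}$ is a left adjoint it preserves colimits, so it suffices to check that the corresponding statement holds \emph{at the level of $\CatI$-graphs}: namely that
\[
  [1](\mathcal{C}_1)_{\txt{graph}} \amalg_{[0]} \cdots \amalg_{[0]} [1](\mathcal{C}_n)_{\txt{graph}} \to [n](\mathcal{C}_1,\ldots,\mathcal{C}_n)_{\txt{graph}}
\]
is an equivalence in $\txt{Graph}(\CatI)$. But this is essentially immediate: a $\CatI$-graph is a functor $X \times X \to \CatI$ from a \emph{space}, and here all the spaces in sight are finite sets, so the pushouts of graphs are computed by pushouts of the underlying sets of objects (giving $\{0,\ldots,n\}$, since we are gluing $n$ copies of a $1$-simplex end to end) together with the evident "left Kan extension / extension by $\emptyset$" of the hom-$\infty$-categories. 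One checks directly from the defining case distinction in \cref{defn:[n]free} that the resulting graph sends $(i-1,i)\mapsto \mathcal{C}_i$ and everything else to $\emptyset$, which is exactly $[n](\mathcal{C}_1,\ldots,\mathcal{C}_n)_{\txt{graph}}$.

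Here there is a subtlety I would need to address carefully: the object $[1](\mathcal{C}_i)$ appearing in the statement is a \emph{free \itcat{}}, hence a complete object, whereas the pushout in the lemma is a priori only a pushout in $\CatIT$ (equivalently in complete objects of $\Seg^{\mathcal{S}}_{\Dop}(\CatI)$), not in all of $\Seg^{\mathcal{S}}_{\Dop}(\CatI)$. However, the completion (localization) functor $\Seg^{\mathcal{S}}_{\Dop}(\CatI) \to \CATI_{(\infty,2)}$ is a left adjoint and so commutes with pushouts, and it also commutes with $\txt{Free}$ up to the fact that free algebras on graphs with groupoid of objects $\iota_0$ are automatically complete when the graph is (here the objects form a finite set, hence a groupoid). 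So I can freely work in $\Seg^{\mathcal{S}}_{\Dop}(\CatI)$, apply $\txt{Free}$, and the statement descends. Alternatively — and this may be cleaner — I would test the map against an arbitrary \itcat{} $\mathcal{X}$ using the mapping-space description in the Remark preceding the lemma: $\Map([1](\mathcal{C}),\mathcal{X})$ over $\Map([0],\mathcal{X})^{\times 2}$ has fibre $\Map_{\CatI}(\mathcal{C},\mathcal{X}(x,y))$. Then $\Map$ out of the left-hand pushout is, by the universal property of colimits, the limit over the diagram $[1](\mathcal{C}_1) \leftarrow [0] \rightarrow [1](\mathcal{C}_2) \leftarrow \cdots$, which unwinds to: a tuple of objects $x_0,\ldots,x_n \in \mathcal{X}$ together with a morphism $\mathcal{C}_i \to \mathcal{X}(x_{i-1},x_i)$ for each $i$. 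This is visibly the same as $\Map([n](\mathcal{C}_1,\ldots,\mathcal{C}_n),\mathcal{X})$ by the formula for the hom-$\infty$-categories of $[n](\mathcal{C}_1,\ldots,\mathcal{C}_n)$ together with its composition — indeed $[n](\mathcal{C}_1,\ldots,\mathcal{C}_n)$ corepresents exactly this data by construction as a free algebra.

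I expect the main obstacle to be purely bookkeeping: making precise the claim that "$\txt{Free}$ commutes with the relevant colimit and the relevant completion," and checking that the comparison map in the lemma is genuinely the one induced by the inert maps $[0],[1]\to[n]$ under all the identifications — i.e.\ that the map built from the universal property agrees with the geometrically evident one. Both potential proofs reduce to the same combinatorial core, namely that gluing $n$ intervals end to end along their endpoints, at the level of $\CatI$-graphs, produces the graph $[n](\mathcal{C}_1,\ldots,\mathcal{C}_n)_{\txt{graph}}$; once that is in hand, the lemma follows formally. I would present the mapping-space argument as the main line since it avoids any discussion of pushouts in the non-complete category, invoking only the Remark immediately preceding the lemma and the explicit description of hom-categories and composition in \cref{defn:[n]free}.
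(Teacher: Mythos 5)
Your proposal is correct and takes essentially the same approach as the paper, whose entire proof is that taking free \itcats{} is a left adjoint, so the map is the free functor applied to the morphism of graphs $[1](\mathcal{C}_{1})_{\txt{graph}} \amalg_{[0]_{\txt{graph}}} \cdots \amalg_{[0]_{\txt{graph}}} [1](\mathcal{C}_{n})_{\txt{graph}} \to [n](\mathcal{C}_{1},\ldots,\mathcal{C}_{n})_{\txt{graph}}$, which is obviously an equivalence. The completeness subtlety you raise is not needed (the free \itcat{} functor, being a composite of left adjoints, preserves the pushout directly in $\CatIT$), and your mapping-space variant is the same adjunction argument read through the Yoneda lemma.
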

\begin{proof}
  Since taking free \itcats{} is a left adjoint, this is the free
  functor on a morphism of graphs
  \[ [1](\mathcal{C}_{1})_{\txt{graph}} \amalg_{[0]_{\txt{graph}}}
    \cdots \amalg_{[0]_{\txt{graph}}}
    [1](\mathcal{C}_{n})_{\txt{graph}} \to
    [n](\mathcal{C}_{1},\ldots,\mathcal{C}_{n})_{\txt{graph}},\] which
  is obviously an equivalence.
\end{proof}

\begin{lemma}\label{lem:[1]wccolim}
  The functor $[1](\blank) \colon \CatI \to \CatIT$ preserves weakly contractible colimits.
\end{lemma}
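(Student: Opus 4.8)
The plan is to factor $[1](\blank)$ as a functor $G$ followed by a left adjoint, and then to check directly that $G$ preserves weakly contractible colimits. Concretely, $[1](\blank)$ factors as
\[
\CatI \xrightarrow{G} \txt{Graph}(\CatI) \xrightarrow{F} \CatIT,
\]
where $G$ sends an $\infty$-category $\mathcal{C}$ to the $\CatI$-graph $[1](\mathcal{C})_{\txt{graph}}$, and $F$ is the free $(\infty,2)$-category functor --- the left adjoint to the forgetful functor $\CatIT \to \txt{Graph}(\CatI)$, equivalently $\txt{Free}$ followed by the completion localization. As a left adjoint, $F$ preserves all colimits, so it is enough to prove that $G$ preserves weakly contractible colimits.

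Unwinding the definition of $\txt{Graph}(\CatI) = \Fun^{\mathcal{S}}(\simp^{\txt{el},\op}, \CatI)$ and of $[1](\mathcal{C})_{\txt{graph}}$ from \cref{defn:[n]free}, the functor $G$ takes $\mathcal{C}$ to the diagram $\simp^{\txt{el},\op} \to \CatI$ whose value at $[0]$ is the two-element set $\{0,1\}$ and whose value at $[1]$ is $\mathcal{C}$ itself, with the two maps $d_{0},d_{1} \colon [0] \to [1]$ of $\simp^{\txt{el}}$ acting as the two constant maps $\mathcal{C} \to \{0,1\}$. (The relevant point is that a $\CatI$-graph has no identity morphisms, so the value at $[1]$ is $\mathcal{C}$ on the nose, with no identity summands added; this is what keeps $G$ close to colimit-preserving.) Moreover, $\txt{Graph}(\CatI) \hookrightarrow \Fun(\simp^{\txt{el},\op}, \CatI)$ is reflective, as it is cut out by the single locality condition ``the value at $[0]$ lies in $\mathcal{S}$'' and $\mathcal{S} \hookrightarrow \CatI$ is itself reflective. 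Hence colimits in $\txt{Graph}(\CatI)$ are computed by applying the reflector to the objectwise colimit in $\Fun(\simp^{\txt{el},\op}, \CatI)$; in particular, whenever the objectwise colimit already satisfies the locality condition, it is the colimit in $\txt{Graph}(\CatI)$.

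Now fix a weakly contractible diagram $p \colon K \to \CatI$ and compute the objectwise colimit of $G \circ p$. Its value at $[1]$ is $\operatorname{colim}_{K} p$. Its value at $[0]$ is $\operatorname{colim}_{K}$ of the constant diagram at $\{0,1\}$, which is again $\{0,1\}$: mapping out of it computes the limit over $K$ of a constant diagram of spaces, and the limit of a constant diagram indexed by a weakly contractible $\infty$-category is the constant value. The two structure maps of the objectwise colimit are the colimits of the two constant maps $\mathcal{C}_{i} \to \{0,1\}$, so they are again the two constant maps $\operatorname{colim}_{K} p \to \{0,1\}$. Thus the objectwise colimit of $G \circ p$ is precisely $G(\operatorname{colim}_{K} p)$, whose value at $[0]$ is the space $\{0,1\}$; by the previous paragraph it is therefore also the colimit of $G \circ p$ in $\txt{Graph}(\CatI)$, and we conclude that $G$, and hence $[1](\blank)$, preserves weakly contractible colimits.

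The only real content here is the objectwise description of $G$; everything else is formal. The hypothesis is evidently sharp: for an indexing shape that is not weakly contractible --- already the empty diagram, or a two-point discrete diagram --- the colimit of the constant diagram at $\{0,1\}$ has the wrong number of objects, which is precisely why $[1](\blank)$ does not preserve all colimits. One could instead avoid $\txt{Graph}(\CatI)$ and argue directly in $\Fun(\Dop,\CatI)$, using that $[1](\mathcal{C})$, viewed as a complete Segal object in $\CatI$, is levelwise a coproduct of copies of the point and copies of $\mathcal{C}$ with all structure maps acting as the identity on the $\mathcal{C}$-summands; but the graph-level argument avoids this bookkeeping. I do not anticipate a genuine obstacle.
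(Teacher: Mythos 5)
Your proof is correct, and it takes a genuinely different (though closely parallel) route from the paper's. The paper argues contravariantly, by mapping into a test \itcat{} $\mathcal{X}$: using the fibre sequence $\Map_{\CatI}(\mathcal{C}, \mathcal{X}(x,y)) \to \Map([1](\mathcal{C}), \mathcal{X}) \to \Map([0],\mathcal{X})^{\times 2}$ recorded in the remark after \cref{defn:[n]free}, it compares $\Map([1](\colim_{\mathcal{I}} f), \mathcal{X})$ with $\lim_{\mathcal{I}^{\op}} \Map([1](f), \mathcal{X})$ over the base $\Map([0],\mathcal{X})^{\times 2}$; weak contractibility enters exactly once, to say that the limit of this constant base is the base itself, after which the fibres are handled by the usual mapping-space adjunction in $\CatI$. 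You instead work covariantly: you factor $[1](\blank)$ as $G \colon \CatI \to \txt{Graph}(\CatI)$ followed by the free functor, which is a left adjoint by the paper's own definition of $[n](\mathcal{C}_{1},\ldots,\mathcal{C}_{n})$ (composed with the completion localization), and then verify directly that $G$ preserves weakly contractible colimits, using that $\txt{Graph}(\CatI)$ is reflective in $\Fun(\simp^{\txt{el},\op}, \CatI)$ and that the pointwise colimit of $G \circ p$ is already $G(\colim_{K} p)$ because the object part $\{0,1\}$ is constant. The essential input is the same in both arguments -- over a weakly contractible index, the (co)limit of the constant object-level data ($\{0,1\}$ in your version, $\Map([0],\mathcal{X})^{\times 2}$ in the paper's) is unchanged -- but your formulation isolates it structurally and makes the sharpness of the hypothesis visible, at the modest cost of the reflectivity and pointwise-colimit verifications in the graph category, which you address adequately; the paper's version avoids any discussion of colimits of graphs and needs only the fibre identification it has already set up.
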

\begin{proof}
  Given a diagram $f \colon \mathcal{I} \to \CatI$ and
  $\mathcal{X} \in \CatIT$, we have a
  natural commutative square
  \[
    \begin{tikzcd}
      \Map([1](\colim_{\mathcal{I}} f), \mathcal{X}) \arrow{r} \arrow{d} &
      \lim_{\mathcal{I}^{\op}} \Map([1](f), \mathcal{X}) \arrow{d} \\
      \Map([0], \mathcal{X})^{\times 2} \arrow{r} &
      \lim_{\mathcal{I}^{\op}} \Map([0], \mathcal{X})^{\times 2}.
    \end{tikzcd}
  \]
  If $\mathcal{I}$ is weakly contractible then the bottom horizontal
  morphism is an equivalence, so to show the top horizontal morphism
  is an equivalence it suffices to show it is an equivalence on the
  fibre at any pair of objects $x,y \in \mathcal{C}$. Since limits
  commute, we can identify the map on
  fibres as
  \[ \Map_{\CatI}(\colim_{\mathcal{I}} f, \mathcal{X}(x,y)) \to
    \lim_{\mathcal{I}^{\op}} \Map_{\CatI}(f,
    \mathcal{X}(x,y)),\]
  which is indeed an equivalence.
\end{proof}

\begin{lemma}\label{lem:[1]wcrep}
  If $\mathcal{I}$ is a weakly contractible \icat{} and $\mathcal{X}$
  is an \itcat{} which corresponds to a simplicial \icat{} $X_{\bullet}$, then there is a
  natural equivalence
  \[ \Map_{\CatIT}([1](\mathcal{I}), \mathcal{X}) \simeq
    \Map_{\CatI}(\mathcal{I}, X_{1}).\]
\end{lemma}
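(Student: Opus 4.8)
The plan is to view each side as a space over $X_{0}\times X_{0}$, to identify the fibres of both, and to write down a natural comparison map and check that it is an equivalence on each fibre. For the left-hand side this is exactly what the Remark following \cref{defn:[n]free} provides: restriction along the two inclusions $[0]\to[1](\mathcal{I})$ of the objects gives a map $\Map_{\CatIT}([1](\mathcal{I}),\mathcal{X}) \to \Map_{\CatIT}([0],\mathcal{X})^{\times 2}$, and since $\Map_{\CatIT}([0],\mathcal{X}) \simeq \iota_{0}\mathcal{X} = X_{0}$ the target is $X_{0}\times X_{0}$, with fibre over a pair $(x,y)$ naturally equivalent to $\Map_{\CatI}(\mathcal{I},\mathcal{X}(x,y))$.

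For the right-hand side I would use the map $X_{1}\to X_{0}\times X_{0}$ induced by the two maps $[0]\to[1]$. Applying $\Map_{\CatI}(\mathcal{I},\blank)$ and using that $X_{0}$ is an \igpd{} together with the weak contractibility of $\mathcal{I}$ — so that $\Map_{\CatI}(\mathcal{I},X_{0}) \simeq \Map_{\mathcal{S}}(\|\mathcal{I}\|,X_{0}) \simeq X_{0}$, by the adjunction $\|\blank\| \dashv (\mathcal{S}\hookrightarrow\CatI)$ — we obtain a map $\Map_{\CatI}(\mathcal{I},X_{1}) \to X_{0}\times X_{0}$. Since $\Map_{\CatI}(\mathcal{I},\blank)$ preserves the pullback square defining $\mathcal{X}(x,y)$, and the functor $\mathcal{I}\to X_{0}\times X_{0}$ singling out $(x,y)$ factors through the point $\{(x,y)\}$, the fibre of this map over $(x,y)$ is again $\Map_{\CatI}(\mathcal{I},\mathcal{X}(x,y))$.

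It remains to produce a map realizing the matching of these descriptions. Under the identification of the Remark following \cref{defn:[n]free}, the identity of $\mathcal{I}$ — seen as a point of $\Map_{\CatI}(\mathcal{I},[1](\mathcal{I})(0,1))$ — gives, after composing with the inclusion of the fibre $[1](\mathcal{I})(0,1)\hookrightarrow([1](\mathcal{I}))_{1}$ over the object $(0,1)$, a canonical functor $u\colon \mathcal{I}\to([1](\mathcal{I}))_{1}$ that is natural in $\mathcal{I}$. Writing $F_{1}$ for the degree-$1$ part of a functor $F\colon[1](\mathcal{I})\to\mathcal{X}$ as a map of simplicial \icats{}, the assignment $F\mapsto F_{1}\circ u$ defines a map $\Map_{\CatIT}([1](\mathcal{I}),\mathcal{X}) \to \Map_{\CatI}(\mathcal{I},X_{1})$, clearly natural in $\mathcal{I}$ and in $\mathcal{X}$. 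Since $u$ factors through the fibre over $(0,1)$, this map lies over $X_{0}\times X_{0}$, and the same factorization shows that over $(x,y)$ it is the identity of $\Map_{\CatI}(\mathcal{I},\mathcal{X}(x,y))$. A map of spaces over the space $X_{0}\times X_{0}$ that is an equivalence on every fibre is an equivalence, which finishes the proof.

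The step that needs the most care is the identification of the fibres of $\Map_{\CatI}(\mathcal{I},X_{1})$, and in particular making explicit where weak contractibility of $\mathcal{I}$ enters: it is used only to collapse $\Map_{\CatI}(\mathcal{I},X_{0})$ to $X_{0}$ so that the base matches the one appearing on the left. (Without this hypothesis the statement fails: the left-hand side still lives over $X_{0}\times X_{0}$, whereas the right-hand side lives over $\Map_{\mathcal{S}}(\|\mathcal{I}\|,X_{0})^{\times 2}$.) Once the bases and fibres are matched, checking that $F\mapsto F_{1}\circ u$ respects the projections and restricts to the identity on each fibre is routine bookkeeping with the free-forgetful adjunction recalled in that Remark.
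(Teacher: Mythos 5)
Your proof is correct and takes essentially the same route as the paper: both arguments compare the fibre sequence $\Map(\mathcal{I},\mathcal{X}(x,y)) \to \Map_{\CatIT}([1](\mathcal{I}),\mathcal{X}) \to X_{0}^{\times 2}$ (via the remark following Definition~\ref{defn:[n]free}) with the fibre sequence for $\Map_{\CatI}(\mathcal{I},X_{1}) \to \Map_{\CatI}(\mathcal{I},X_{0})^{\times 2}$, using that $X_{0}$ is an $\infty$-groupoid and that $\mathcal{I}$ is weakly contractible to identify the bases. Your explicit comparison map $F \mapsto F_{1}\circ u$ simply makes precise the identification of the two fibre sequences that the paper leaves implicit, which is a welcome extra level of care rather than a different method.
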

\begin{proof}
  We have a natural fibre sequence
  \[ \Map(\mathcal{I}, \mathcal{X}(x,y)) \to \Map(\mathcal{I},
    X_{1}) \to \Map(\mathcal{I}, X_{0})^{\times 2}
    \simeq \Map(\| \mathcal{I} \|, X_{0})^{\times 2},\]
  where the equivalence uses that $X_{0}$ is an $\infty$-groupoid.
  If $\mathcal{I}$ is weakly contractible, this is equivalent to the
  fibre sequence above for $\Map([1](\mathcal{I}), \mathcal{X})$.
\end{proof}

\section{The Gray Tensor Product}\label{sec:gray}
For ordinary (strict) 2-categories, Gray~\cite{GrayFormal} defined a
(non-symmetric) tensor product $\otimes^{\pcolax}$ (where
$\mathbf{A} \otimes^{\lax} \mathbf{B} \cong \mathbf{B}
\otimes^{\colax} \mathbf{A}$), colimit-preserving in each variable,
such that the internal Homs are 2-categories of functors where
morphisms are either lax or colax\footnote{We have tried to follow the
  convention that the prefix ``co'' refers to reversing the direction
  of 2-morphisms, while ``op'' refers to reversing that of
  1-morphisms. Since the two types of lax natural transformations are
  related by reversing 2-morphisms, we call them lax and colax
  transformations (just as we would refer to lax and colax functors,
  though these do not appear in this paper). However, in the
  2-categorical literature the term \emph{oplax} natural
  transformation is also common.}  natural transformations (depending
on whether we take the adjoint in the first or second variable). In
this section we first recall an explicit description of
$I \otimes^{\colax}J$ for $I,J \in \bTt$ and then discuss the
(expected) extension of the Gray tensor product to \itcats{} and its
basic properties.

\begin{notation}
Recall that $[k]$ denotes the ordered set $\{0 < 1 < \cdots < n\}$.
Viewing this as a poset, the product $[k]\times [m]$
of posets has the shape of a rectangular grid.
This is a ranked poset; its maximal chains (\ie{} the paths from
$(0,0)$ to $(k,m)$)
all have length $k+m$ and form a poset denoted $\operatorname{MaxCh}([k]{\times}[m])$,
whose partial order relation is generated by 
\[
\rotatebox[origin=c]{180}{$\Lsh$} \leq \rotatebox[origin=c]{270}{$\Rsh$}
\]
(Note that this poset is isomorphic to the poset $\operatorname{Sh}(k,m)$ of 
\emph{$(k,m)$-shuffles}, ordered with $k+m$ as the least element and $m+k$ as the 
greatest element.)
\end{notation}

\begin{notation}
  For non-negative integers $i \leq j$ it is convenient to also
  introduce the notation $[i,j]$ for the ordered set $\{i < i+1 < \cdots
  < j \}$, which is isomorphic to $[j-i]$. If $i > j$ it is convenient
  to take $[i,j] = \emptyset$.
\end{notation}

\begin{defn}\label{defn:GrayTh2}
  If $I = [n](x_1,\ldots,x_n)$ and $J = [m](y_1,\ldots,y_m)$ are
  objects of $\bTt$, then the Gray tensor product
  $I \otimes^{\colax} J$ is the
  $2$-category with object set $\obj([n]) \times \obj([m])$ and
  Hom-categories (actually posets)
  \[
    \begin{split}
  \Hom((i,j), (i',j')) & :=
  \begin{cases}
    \operatorname{MaxCh}([i,i']{\times}[j,j']) \times 
\prod_{i < s \leq i'} [x_s] \times \prod_{j < t \leq j'} [y_t], & i\leq
i',j \leq j' \\
    \emptyset, & \txt{otherwise}
  \end{cases}
  \\ &
  \cong
    \operatorname{MaxCh}([i,i']{\times}[j,j']) \times 
I(i,i') \times J(j,j')
    \end{split}
  \]
The composition of morphisms $(i,j) \to (i',j')$ and
$(i',j') \to (i'',j'')$ is defined by combining the composition in $I$
and $J$ with the natural inclusion
\[ \operatorname{MaxCh}([i,i']{\times}[j,j']) \times
  \operatorname{MaxCh}([i',i'']{\times}[j',j'']) \to
  \operatorname{MaxCh}([i,i'']{\times}[j,j''])\]
that combines a path from $(i,j)$ to $(i',j')$ with a path from
$(i',j')$ to $(i'',j'')$ to get the subset of paths from $(i,j)$ to
$(i'',j'')$ that factor through $(i',j')$. With this definition there
is also a canonical way to define 
functors between Gray tensor products from morphisms in $\bTt$, so
that we obtain a functor $\otimes^{\colax} \colon \bTt \times \bTt \to \CatIT$.
\end{defn}

\begin{exs}
$[1](0) \otimes^{\colax} [1](0) $ has $4$ objects, $00, 01, 10, 11$,
and 
\(
\Hom(00,11) = (\rotatebox[origin=c]{180}{$\Lsh$} \leq 
\rotatebox[origin=c]{270}{$\Rsh$})
\).
The remaining hom categories are discrete: contractible if the indices are 
non-decreasing, empty if some index decreases.
The whole $2$-category can therefore be depicted as a colax square:
\[
\begin{tikzcd}
  00 \ar[r] \ar[d] & 01 \ar[d] \\
  10 \ar[r] \ar[ru, Rightarrow] & 11
\end{tikzcd}
\]
Similarly, $C_2 \otimes^{\colax} C_{1} = [1](1) \otimes^{\colax} [1](0)$
has the shape
of a cylinder (with side squares colax):
\[
\begin{tikzcd}
  00 \ar[rr] \ar[d, bend right=45, ""{right,name=A1}] && 01 \ar[d,
  bend right=45, ""{right,name=A2}] \ar[d, bend left=45, ""{left,name=B2}] \\
  10 \ar[rr] 
  \ar[rru, bend right=30, 
  start anchor=north east, end anchor=south, crossing over, Rightarrow]
  \ar[rru, bend left=30, 
   start anchor=north, end anchor=south west,Rightarrow]
  \ar[u, leftarrow, crossing over, bend right=45, ""{left,name=B1}]
  && 11.
  \arrow[from=A1,to=B1,Rightarrow]
  \arrow[from=A2,to=B2,Rightarrow]
\end{tikzcd}
\]
This means that a diagram of shape $C_{2} \otimes^{\colax} C_{1}$ in
an \itcat{} $\mathcal{X}$ consists of the following data in $\mathcal{X}$:
\begin{itemize}
\item objects $X,Y,X',Y'$,
\item morphisms $f,g \colon X \to Y$, $f',g' \colon X' \to Y'$, $\xi
  \colon X \to X'$, $\eta \colon Y \to Y'$,
\item 2-morphisms $\alpha \colon f \to g$, $\alpha' \colon f' \to g'$,
  $\phi \colon \eta f \to f' \xi$, $\psi \colon \eta g \to g' \xi$,
\item an equivalence 
  $\psi \circ (\eta \alpha) \simeq (\alpha' \xi) \circ \phi$
  of 2-morphisms $\eta f \to g' \xi$.
\end{itemize}
\end{exs}

Since we can view 2-categories as \itcats{}, the classical Gray tensor
product induces a functor
\[ \otimes^{\colax} \colon \bbTheta_{2} \times \bbTheta_{2} \to
  \CatIT.\]
We will make the following three assumptions about this functor:
\begin{assumption}\label{ass:gray}\ 
  \begin{enumerate}[(1)]
  \item The functor $\otimes^{\colax}$ satisfies the co-Segal condition\footnote{By the co-Segal condition for a functor
      $\phi \colon \bTt \to \mathcal{C}$ we mean the Segal condition
      for $\phi^{\op} \colon \bTt^{\op}\to \mathcal{C}^{\op}$.}
    in each variable. The
    unique extension to a functor $\mathcal{P}(\bTt) \times
    \mathcal{P}(\bTt) \to \CatIT$ that preserves colimits in each
    variable therefore uniquely factors through a functor
    $\Seg_{\bTt^{\op}}(\mathcal{S}) \times
    \Seg_{\bTt^{\op}}(\mathcal{S}) \to \CatIT$ that preserves colimits
    in each variable.
  \item The functor $\Seg_{\bTt^{\op}}(\mathcal{S}) \times
    \Seg_{\bTt^{\op}}(\mathcal{S}) \to \CatIT$ takes fully faithful
    and essentially surjective morphisms in each variable to
    equivalences, and thus factors uniquely through a functor
    \[ \otimes^{\lax} \colon \CatIT \times \CatIT \to \CatIT.\]
  \item The restriction of $\otimes^{\lax}$ to ordinary (strict)
    2-categories agrees with the classical Gray tensor
    product.\footnote{In fact, we only need this assumption in the
      case of \emph{gaunt} 2-categories, meaning ones with no
      non-trivial invertible 1- or 2-morphisms, which may be more
      straightforward to prove than the general case.}
  \end{enumerate}
\end{assumption}

\begin{remark}\label{rmk:grayconstr}
  Assumptions (1) and (2) have recently been proved by Y.~Maehara
  \cite{MaeharaGray}, who shows that formally extending the ordinary
  Gray tensor product on $\bTt$ gives a left Quillen bifunctor for
  $\bTt$-sets. Several other constructions of Gray tensor products in
  various models of $(\infty,2)$-categories (some more generally in
  $(\infty,n)$-categories) have also recently appeared, including
  \cite{GagnaHarpazLanariGray,OzornovaRovelliVerityGray,CKMComical}.
\end{remark}

\begin{remark}
  As observed by Ayala--Francis~\cite{AyalaFrancisFlagged}, a colimit
  diagram in $\CatIT$ whose underlying diagrams of \icats{} and
  \igpds{} are also colimit diagrams is a colimit in
  $\Seg_{\bbTheta_{2}^{\op}}(\mathcal{S})$. This is true for the
  diagrams exhibiting the co-Segal condition for $\otimes^{\colax}$,
  hence we can also take left Kan extensions to obtain a functor
\[ \otimes^{\colax}_{\Seg} \colon \Seg_{\bbTheta_{2}^{\op}}(\mathcal{S}) \times
  \Seg_{\bbTheta_{2}^{\op}}(\mathcal{S}) \to
  \Seg_{\bbTheta_{2}^{\op}}(\mathcal{S}),\]
colimit-preserving in each variable, such that there is a commutative
diagram
\[
  \begin{tikzcd}
    \Seg_{\bbTheta_{2}^{\op}}(\mathcal{S}) \times
    \Seg_{\bbTheta_{2}^{\op}}(\mathcal{S}) \arrow{d}
    \arrow{r}{\otimes^{\colax}_{\Seg}} &
    \Seg_{\bbTheta_{2}^{\op}}(\mathcal{S}) \arrow{d} \\
    \CatIT \times \CatIT \arrow{r}{\otimes^{\colax}} & \CatIT,
  \end{tikzcd}
\]
where the vertical morphisms are given by localization.
\end{remark}

\begin{defn}\label{defn:laxcolaxgray}
  For \itcats{} $\mathcal{X}$ and $\mathcal{Y}$ we call
  $\mathcal{X} \otimes^{\colax} \mathcal{Y}$ the \emph{colax Gray tensor
    product} of $\mathcal{X}$ and $\mathcal{Y}$. We will also write
  $\mathcal{X} \otimes^{\lax} \mathcal{Y} := \mathcal{Y}
  \otimes^{\colax} \mathcal{X}$, and call this the {\em lax Gray
    tensor product}.
\end{defn}

\begin{defn}
  The functor $\otimes^{\pcolax}$ preserves colimits in each variable,
  and so has adjoints
  $\FUN(\blank,\blank)_{\txt{(co)lax}}$, which satisfy
  \[
    \begin{split}
    \Map_{\CatIT}(\mathcal{X}, \FUN(\mathcal{Y}, \mathcal{Z})_{\colax}) 
	&\;\simeq\;
	\Map_{\CatIT}(\mathcal{Y} \otimes^{\colax} \mathcal{X}, \mathcal{Z})
	\\
	&\;\simeq\;
    \Map_{\CatIT}(\mathcal{X} \otimes^{\lax} \mathcal{Y}, \mathcal{Z})
	\;\simeq\; \Map_{\CatIT}(\mathcal{Y}, \FUN(\mathcal{X},\mathcal{Z})_{\lax}) 
	.  
    \end{split}
  \]
  A \emph{(co)lax natural
    transformation} (between functors $\mathcal{X} \to \mathcal{Y}$)
	is a functor of $(\infty,2)$-categories
  \[ \mathcal{X} \otimes^{\txt{(co)lax}} \Delta^{1} \to \mathcal{Y}.\]
  The $(\infty,2)$-category $\FUN(\mathcal{X},
  \mathcal{Y})_{\txt{(co)lax}}$ thus has usual functors of
  $(\infty,2)$-categories as objects, and (co)lax natural
  transformations as morphisms. Similarly, the $2$-morphisms are functors of 
  $(\infty,2)$-categories $\mathcal{X} \otimes^{\txt{(co)lax}} C_2 
  \to \mathcal{Y}$.
\end{defn}

\begin{remark}
  A lax natural transformation $\eta$ between functors
  $F,G \colon \mathcal{X} \to \mathcal{Y}$ assigns to every morphism
  $f \colon X \to X'$ in $\mathcal{X}$ a lax square
  \[
    \begin{tikzcd}
      F(X) \arrow{r}{\eta_{X}} \arrow{d}[swap]{F(f)} & G(X) \arrow{d}{G(f)}
      \arrow[dl,Rightarrow,inner sep=2pt]\\
      F(X') \arrow{r}{\eta_{X'}} & G(X'),
    \end{tikzcd}
  \]
  while a colax natural transformation assigns a colax square
    \[
    \begin{tikzcd}
      F(X) \arrow{r}{\eta_{X}} \arrow{d}[swap]{F(f)} & G(X) \arrow{d}{G(f)}
      \arrow[dl,Leftarrow,inner sep=2pt]\\
      F(X') \arrow{r}{\eta_{X'}} & G(X').
    \end{tikzcd}
  \]
\end{remark}

\begin{remark}\label{rmk:colaxigpd}
  Note that if $f$ in the previous remark is $\id_{X}$ then our
  definition requires the (co)lax square to be the identity of
  $\eta_{X}$, and then the compatibility with composition implies that
  if $f$ is an equivalence then the (co)lax square commutes. This
  suggests that for functors from an \igpd{} (co)lax natural
  transformations should reduce to ordinary natural
  transformations. To see this more formally, first note that if $X$
  is an \igpd{} then the natural equivalence
  $X \simeq \colim_{X} C_{0}$ induces for any \itcat{} $\mathcal{Y}$
  an equivalence
  \[ X \otimes^{\colax} \mathcal{Y} \simeq \colim_{X} (C_{0}
    \otimes^{\colax} \mathcal{Y}) \simeq \colim_{X} \mathcal{Y} \simeq
    X \times \mathcal{Y}.\]
  Hence if $\mathcal{Z}$ is another \itcat{}, we have natural
  equivalences
  \[ \Map(\mathcal{Y}, \FUN(X,\mathcal{Z})_{\colax}) \simeq \Map(X
    \otimes^{\colax} \mathcal{Y}, \mathcal{Z}) \simeq \Map(X \times
    \mathcal{Y}, \mathcal{Z}) \simeq \Map(\mathcal{Y},
    \FUN(X,\mathcal{Z})),\]
  which implies by the Yoneda lemma that we indeed have a natural equivalence
  \[ \FUN(X,\mathcal{Z})_{\colax} \simeq \FUN(X,\mathcal{Z}). \]
\end{remark}

\begin{remark}
  The paper \cite{JohnsonFreydScheimbauerLax} of Johnson-Freyd and
  Scheimbauer gives an alternative construction of the \itcats{} of
  $\FUN(\mathcal{X},\mathcal{Y})_{\pcolax}$, without defining the Gray
  tensor product in general: in our notation they give explicit
  definitions of \itcats{} corresponding to our $\FUN(\tau([n],[m]),
  \mathcal{Y})_{\pcolax}$ using the functor $\tau \colon \simp \times
  \simp \to \bTt$, and then define
  $\FUN(\mathcal{X},\mathcal{Y})_{\lax}$ as the 2-fold Segal space
  \[\Map(\mathcal{X}, \FUN(\tau(\blank,\blank), \mathcal{Y})_{\colax}).\]
\end{remark}

\begin{propn}\label{propn:gray2op}
  There is a natural equivalence
  \[ (\mathcal{X} \otimes^{\lax} \mathcal{Y})^{\txt{2-op}} \simeq
    \mathcal{X}^{\txt{2-op}} \otimes^{\colax} \mathcal{Y}^{\txt{2-op}}.\]
\end{propn}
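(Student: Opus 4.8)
The strategy is to reduce the claimed equivalence to an identity on the level of the generating category $\bTt$, then propagate it through the colimit-completion and localization steps by which $\otimes^{\lax}$ and $\otimes^{\colax}$ are defined in \cref{ass:gray}. Concretely, I would first verify that for $I, J \in \bTt$ there is a natural isomorphism of (strict) $2$-categories
\[ (I \otimes^{\colax} J)^{\twop} \cong I^{\twop} \otimes^{\lax} J^{\twop} = J^{\twop} \otimes^{\colax} I^{\twop}. \]
This is a direct inspection of \cref{defn:GrayTh2}: the object set $\obj([n]) \times \obj([m])$ is symmetric in $I$ and $J$, and the Hom-poset $\operatorname{MaxCh}([i,i']\times[j,j']) \times I(i,i') \times J(j,j')$ becomes, after taking $\op$ of each Hom-category (which is what $(\blank)^{\twop}$ does to a strict $2$-category, since these Homs are posets), the poset $\operatorname{MaxCh}([i,i']\times[j,j'])^{\op} \times I(i,i')^{\op} \times J(j,j')^{\op}$; and $\operatorname{MaxCh}([i,i']\times[j,j'])^{\op} \cong \operatorname{MaxCh}([j,j']\times[i,i'])$ via the reflection swapping the two coordinates of the grid (equivalently, reversing the shuffle order), which is exactly the combinatorial input identifying $J^{\twop} \otimes^{\colax} I^{\twop}$. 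One checks this reflection is compatible with the composition maps and with the functoriality in $\bTt$, giving a natural isomorphism of functors $\bTt \times \bTt \to \CatIT$.

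Next I would bootstrap this to all of $\CatIT$ using \cref{ass:gray}. The involution $(\blank)^{\twop}$ on $\CatIT$ corresponds, under the equivalence with $\Seg_{\bTt^{\op}}(\mathcal{S})$, to restriction along an involution of $\bTt$ (reversing all the $\psi_{ij}$), and in particular $(\blank)^{\twop}$ is an equivalence that preserves colimits, fully faithful morphisms, and essentially surjective morphisms. Both sides of the desired equivalence are, by construction, the essentially unique colimit-preserving-in-each-variable extensions (first to $\mathcal{P}(\bTt) \times \mathcal{P}(\bTt)$, then descending to $\Seg_{\bTt^{\op}}(\mathcal{S})$, then to $\CatIT$) of functors on $\bTt \times \bTt$: the left-hand side $(\mathcal{X} \otimes^{\lax} \mathcal{Y})^{\twop} = (\mathcal{Y} \otimes^{\colax} \mathcal{X})^{\twop}$ is the composite of $\otimes^{\colax}$ with the equivalence $(\blank)^{\twop}$ and a swap, while the right-hand side $\mathcal{X}^{\twop} \otimes^{\colax} \mathcal{Y}^{\twop}$ is the composite of $(\blank)^{\twop} \times (\blank)^{\twop}$ with $\otimes^{\colax}$. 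Both are colimit-preserving in each variable (since $(\blank)^{\twop}$ is), both take fully faithful and essentially surjective morphisms to equivalences in each variable, and by the previous paragraph they agree on $\bTt \times \bTt$. By the uniqueness clauses in \cref{ass:gray}(1)--(2) they therefore agree, naturally, on $\CatIT \times \CatIT$.

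The main obstacle is the first step: pinning down the reflection isomorphism $\operatorname{MaxCh}([i,i']\times[j,j'])^{\op} \cong \operatorname{MaxCh}([j,j']\times[i,i'])$ and checking it is natural with respect to both the composition maps of \cref{defn:GrayTh2} and the morphisms of $\bTt$, so that one genuinely obtains a natural isomorphism of functors rather than just objectwise isomorphisms; the generating relation $\rotatebox[origin=c]{180}{$\Lsh$} \leq \rotatebox[origin=c]{270}{$\Rsh$}$ of the shuffle poset is reversed by the reflection, which is the precise reason $\colax$ and $\lax$ get swapped, but one must confirm the $I(i,i')$- and $J(j,j')$-factors are handled correctly (these acquire $\op$'s from $(\blank)^{\twop}$ applied to $I$ and $J$, matching the $\op$'s appearing in $I^{\twop}(i,i')$ and $J^{\twop}(j,j')$). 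Everything after that is formal manipulation of the universal properties in \cref{ass:gray}, together with the observation that the $\twop$ involution is implemented by restriction along a (colimit-, fully-faithful-, and essential-surjectivity-preserving) self-equivalence of the indexing data.
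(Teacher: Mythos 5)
Your proposal is correct and takes essentially the same approach as the paper: the paper simply cites the corresponding identity for the classical Gray tensor product of strict 2-categories (which you instead verify by hand from \cref{defn:GrayTh2}, via the transposition isomorphism of shuffle posets) to get a natural equivalence for objects of $\bTt$, and then extends by colimits in each variable to all \itcats{}, exactly as in your second step.
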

\begin{proof}
  There is such an equivalence for the tensor product of ordinary
  2-categories, so there is a natural equivalence for  $\mathcal{X},
  \mathcal{Y} \in \bbTheta_{2}$, which extends by colimits to an
  equivalence for all $\mathcal{X},\mathcal{Y}$.
\end{proof}
\begin{cor}\label{cor:FUNlax2op}
  There is a natural equivalence
  \[ \FUN(\mathcal{X}, \mathcal{Y})_{\lax}^{\twop} \simeq
    \FUN(\mathcal{X}^{\twop}, \mathcal{Y}^{\twop})_{\colax}\]
  for all \itcats{} $\mathcal{X}, \mathcal{Y}$.\qed
\end{cor}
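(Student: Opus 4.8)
The plan is to deduce the corollary from \cref{propn:gray2op} by a Yoneda argument, using that the 2-op functor $(\blank)^{\twop} \colon \CatIT \to \CatIT$ is an involution, hence an equivalence. (This last point is immediate from the description of $\mathcal{X}^{\twop}$ as the levelwise opposite $X_{\bullet}^{\op}$ of the simplicial \icat{} representing $\mathcal{X}$, together with the fact that $\op \colon \CatI \to \CatI$ is an involution.) Concretely, I would fix an arbitrary \itcat{} $\mathcal{Z}$ and transform the mapping space $\Map_{\CatIT}(\mathcal{Z}, \FUN(\mathcal{X}, \mathcal{Y})_{\lax}^{\twop})$ step by step into $\Map_{\CatIT}(\mathcal{Z}, \FUN(\mathcal{X}^{\twop}, \mathcal{Y}^{\twop})_{\colax})$.

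The chain of natural equivalences is: first, since $(\blank)^{\twop}$ is an equivalence, $\Map(\mathcal{Z}, \FUN(\mathcal{X}, \mathcal{Y})_{\lax}^{\twop}) \simeq \Map(\mathcal{Z}^{\twop}, \FUN(\mathcal{X}, \mathcal{Y})_{\lax})$; next, the defining adjunction for $\FUN(\blank,\blank)_{\lax}$ gives $\simeq \Map(\mathcal{X} \otimes^{\lax} \mathcal{Z}^{\twop}, \mathcal{Y})$; applying $(\blank)^{\twop}$ once more yields $\simeq \Map\bigl((\mathcal{X} \otimes^{\lax} \mathcal{Z}^{\twop})^{\twop}, \mathcal{Y}^{\twop}\bigr)$; \cref{propn:gray2op} then identifies $(\mathcal{X} \otimes^{\lax} \mathcal{Z}^{\twop})^{\twop} \simeq \mathcal{X}^{\twop} \otimes^{\colax} (\mathcal{Z}^{\twop})^{\twop} \simeq \mathcal{X}^{\twop} \otimes^{\colax} \mathcal{Z}$, where the final identification again uses that $(\blank)^{\twop}$ is an involution; and finally the defining adjunction for $\FUN(\blank,\blank)_{\colax}$ gives $\Map(\mathcal{X}^{\twop} \otimes^{\colax} \mathcal{Z}, \mathcal{Y}^{\twop}) \simeq \Map(\mathcal{Z}, \FUN(\mathcal{X}^{\twop}, \mathcal{Y}^{\twop})_{\colax})$. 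Since every step is natural in $\mathcal{Z}$, the Yoneda lemma produces the asserted equivalence; as each step is also natural in $\mathcal{X}$ and $\mathcal{Y}$, the resulting equivalence is natural in those variables too.

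There is no serious obstacle here; the only thing that requires care is the bookkeeping of which slot each \itcat{} occupies in the (co)lax adjunctions, keeping in mind that $\otimes^{\lax}$ and $\otimes^{\colax}$ differ precisely by interchanging their two arguments, so that the 2-op in \cref{propn:gray2op} matches up correctly with the variance of $\FUN(\blank,\blank)_{\pcolax}$ in its two slots.
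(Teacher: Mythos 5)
Your argument is correct and is exactly the argument the paper leaves implicit behind the \qed: transferring the equivalence of \cref{propn:gray2op} across the defining adjunctions for $\FUN(\blank,\blank)_{\pcolax}$, using that $(\blank)^{\twop}$ is an involutive equivalence, and concluding by Yoneda. The bookkeeping of the slots in your chain of mapping-space equivalences matches the paper's conventions, so nothing further is needed.
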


\begin{remark}\label{rmk:graytocart}
  Since $C_{0} \otimes^{\colax} \blank \simeq \id \simeq \blank
  \otimes^{\colax} C_{0}$, we obtain natural morphisms $\mathcal{X}
  \otimes^{\colax} \mathcal{Y} \to \mathcal{X} \otimes^{\colax} C_{0}
  \simeq \mathcal{X}$ and $\mathcal{X}
  \otimes^{\colax} \mathcal{Y} \to C_{0} \otimes^{\colax} \mathcal{Y}
  \simeq \mathcal{Y}$, and so
  a natural morphism \[\mathcal{X}
    \otimes^{\colax} \mathcal{Y} \to \mathcal{X} \times \mathcal{Y}.\]
  We
will now observe that this exhibits $\mathcal{X} \times \mathcal{Y}$
as a localization of $\mathcal{X}
\otimes^{\colax} \mathcal{Y}$:  
\end{remark}

\begin{propn}\label{propn:laxtocart}
  \ 
  \begin{enumerate}[(i)]
  \item If $\mathcal{C}$ and $\mathcal{D}$ are \icats{}, the functor
    $\mathcal{C} \otimes^{\colax} \mathcal{D} \to \mathcal{C} \times
    \mathcal{D}$ exhibits $\mathcal{C} \times \mathcal{D}$ as the
    \icat{} $L_{(\infty,1)}(\mathcal{C} \otimes^{\colax} \mathcal{D})$
    obtained by inverting all 2-morphisms in $\mathcal{C}
    \otimes^{\colax} \mathcal{D}$.
  \item The natural commutative square
    \[
      \begin{tikzcd}
        \iota_{1}\mathcal{X} \otimes^{\colax} \iota_{1}\mathcal{Y}
        \arrow{d} \arrow{r} & \iota_{1}\mathcal{X} \times
        \iota_{1}\mathcal{Y} \arrow{d} \\
        \mathcal{X} \otimes^{\colax} \mathcal{Y} \arrow{r} & \mathcal{X}
        \times \mathcal{Y}
      \end{tikzcd}
    \]
    is a pushout square for all \itcats{} $\mathcal{X}, \mathcal{Y}$.
  \end{enumerate}
\end{propn}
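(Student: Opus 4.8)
The plan is to reduce both statements to the case of representables, where $\otimes^{\colax}$ is given explicitly by \cref{defn:GrayTh2} and \cref{ass:gray}(3), and then to propagate along colimits; part (ii) will feed part (i) into the top horizontal map of the square.

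For (i), note that both $(\mathcal{C},\mathcal{D})\mapsto L_{(\infty,1)}(\mathcal{C}\otimes^{\colax}\mathcal{D})$ and $(\mathcal{C},\mathcal{D})\mapsto\mathcal{C}\times\mathcal{D}$ are functors $\CatI\times\CatI\to\CatI$ preserving colimits in each variable: $L_{(\infty,1)}$ is a left adjoint, $\otimes^{\colax}$ preserves colimits in each variable by \cref{ass:gray}, $\CatI$ is cartesian closed, and the inclusion $\CatI\hookrightarrow\CatIT$ preserves colimits since it is left adjoint to $\iota_{1}$ (the value of $\iota_{1}$ at $\mathcal{X}$ being the sub-\icat{} on all objects and $1$-morphisms with only invertible $2$-morphisms, through which every functor out of an \icat{} factors). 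The natural transformation induced by the map of \cref{rmk:graytocart} has target already an \icat{}, hence is $L_{(\infty,1)}$-local, so it suffices to evaluate at $\mathcal{C}=[k]$, $\mathcal{D}=[l]$ (as $\simp$ generates $\CatI$ under colimits). There \cref{ass:gray}(3) and \cref{defn:GrayTh2} identify $[k]\otimes^{\colax}[l]$ with the poset-enriched $2$-category with object set $[k]\times[l]$ and mapping poset $\operatorname{MaxCh}([i,i']{\times}[j,j'])$ for $i\le i'$, $j\le j'$ (empty otherwise). Each of these posets has a least and a greatest element, so is weakly contractible; and $L_{(\infty,1)}$, being the change of enrichment along the finite-product-preserving left adjoint $\|\blank\|\colon\CatI\to\mathcal{S}$, is computed by applying $\|\blank\|$ to each mapping \icat{} (no recompletion being needed, the output being gaunt). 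Hence $L_{(\infty,1)}([k]\otimes^{\colax}[l])$ has mapping spaces $\pt$ (resp.\ $\emptyset$), which with the induced composition is precisely $[k]\times[l]$, the equivalence being the map of \cref{rmk:graytocart}.

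For (ii) I would first establish that $\mathcal{X}\otimes^{\colax}\mathcal{Y}\to\mathcal{X}\times\mathcal{Y}$ is always a localization and record the class it inverts. The localization functors form a full subcategory of $\Fun(\Delta^{1},\CatIT)$ closed under colimits (it suffices to treat coproducts and cobase changes; and the colimit of localizations at classes $W_{\alpha}$ is the localization at the union of the images of the $W_{\alpha}$). Since $\otimes^{\colax}$ and $\times$ preserve colimits in each variable, $\mathcal{X}\otimes^{\colax}\mathcal{Y}\to\mathcal{X}\times\mathcal{Y}$ is the colimit over $\bTt_{/\mathcal{X}}\times\bTt_{/\mathcal{Y}}$ of the maps $I\otimes^{\colax}J\to I\times J$. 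By \cref{defn:GrayTh2} each of these is, on each mapping \icat{}, the projection $\operatorname{MaxCh}([i,i']{\times}[j,j'])\times I(i,i')\times J(j,j')\to I(i,i')\times J(j,j')$ off a weakly contractible poset; since a map of \itcats{} that is the identity on objects and a localization on each mapping \icat{} (compatibly with composition) is itself a localization, $I\otimes^{\colax}J\to I\times J$ is a localization, and the $2$-morphisms it inverts are exactly the images of the $2$-morphisms of $\iota_{1}I\otimes^{\colax}\iota_{1}J$, whose mapping \icats{} are the discrete disjoint unions $\coprod\operatorname{MaxCh}([i,i']{\times}[j,j'])$. Consequently $\mathcal{X}\otimes^{\colax}\mathcal{Y}\to\mathcal{X}\times\mathcal{Y}$ is the localization at the set $W''$ of images in $\mathcal{X}\otimes^{\colax}\mathcal{Y}$ of the $2$-morphisms of the various $\iota_{1}I\otimes^{\colax}\iota_{1}J$ (over $I\to\mathcal{X}$, $J\to\mathcal{Y}$ in $\bTt$).

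To conclude, observe that by part (i) the top map of the square is $L_{(\infty,1)}$ applied to $\iota_{1}\mathcal{X}\otimes^{\colax}\iota_{1}\mathcal{Y}$, i.e.\ the localization inverting all of its $2$-morphisms; so, a cobase change of a localization being a localization, the pushout $P$ of the span is the localization of $\mathcal{X}\otimes^{\colax}\mathcal{Y}$ at the set $W$ of images of those $2$-morphisms. Naturality of $\iota_{1}$ gives $W''\subseteq W$; conversely, any functor out of $\mathcal{X}\otimes^{\colax}\mathcal{Y}$ inverting $W''$ factors through $\mathcal{X}\times\mathcal{Y}$, so — by commutativity of the given square — its restriction to $\iota_{1}\mathcal{X}\otimes^{\colax}\iota_{1}\mathcal{Y}$ factors through $\iota_{1}\mathcal{X}\times\iota_{1}\mathcal{Y}=L_{(\infty,1)}(\iota_{1}\mathcal{X}\otimes^{\colax}\iota_{1}\mathcal{Y})$ and hence inverts all of $W$. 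Thus $W$ and $W''$ generate the same localization, $P$ and $\mathcal{X}\times\mathcal{Y}$ coincide as localizations of $\mathcal{X}\otimes^{\colax}\mathcal{Y}$, and the canonical comparison $P\to\mathcal{X}\times\mathcal{Y}$ over $\mathcal{X}\otimes^{\colax}\mathcal{Y}$ is an equivalence.

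The step I expect to need the most care is the representable input for (ii): pinning down exactly which $2$-morphisms of $I\otimes^{\colax}J$ become invertible in $I\times J$ and matching them with the $2$-morphisms of $\iota_{1}I\otimes^{\colax}\iota_{1}J$, together with the supporting lemma that a map of \itcats{} which is the identity on objects and a localization on each mapping \icat{} is a localization — this last point is what allows localizations of \itcats{} to be both produced and recognized mapping-\icat{}-wise, and it underlies the whole reduction; it also lets one avoid the fact that $\mathcal{X}\mapsto\iota_{1}\mathcal{X}$ does \emph{not} preserve colimits, which would otherwise obstruct a naive reduction of the square to the cells.
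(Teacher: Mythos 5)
Your treatment of (i) is essentially the paper's own argument: both $(\mathcal{C},\mathcal{D})\mapsto L_{(\infty,1)}(\mathcal{C}\otimes^{\colax}\mathcal{D})$ and $(\mathcal{C},\mathcal{D})\mapsto \mathcal{C}\times\mathcal{D}$ preserve colimits in each variable, so one checks the map on generators; the paper uses only $C_{0}$ and $C_{1}$ (the single nontrivial case being $C_{1}\otimes^{\colax}C_{1}\to C_{1}\times C_{1}$), while you use all of $\simp$ and the shuffle-poset formula. Two small remarks: you do not need \cref{ass:gray}(3) here, since on objects of $\bTt$ the extended tensor product agrees with \cref{defn:GrayTh2} by construction; and your claim that $L_{(\infty,1)}$ is computed by applying $\|\blank\|$ to mapping \icats{} (up to completion) is itself a statement of the same kind as your later recognition lemma and deserves justification, though for the gaunt outputs at hand it is harmless.

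For (ii) you take a genuinely different route. The paper's proof is built around exactly the obstruction you name at the end, namely that $\iota_{1}$ does not preserve colimits on $\CatIT$: it passes to $\Seg_{\bTt^{\op}}(\mathcal{S})$, where $\iota_{1}$ \emph{does} preserve colimits and $\otimes^{\colax}_{\Seg}$ preserves colimits in each variable, reduces the square to the cells $C_{0},C_{1},C_{2}$, verifies those pushouts directly in presheaves using \cref{lem:graytenscolim}, and then notes that the resulting squares consist of local (complete) objects, so they remain pushouts in $\CatIT$. You instead stay in $\CatIT$ and identify both the pushout $P$ and $\mathcal{X}\times\mathcal{Y}$ as localizations of $\mathcal{X}\otimes^{\colax}\mathcal{Y}$ at classes of 2-morphisms generating the same localization. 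This is a correct and rather attractive strategy --- it even yields the sharper statement that $\mathcal{X}\otimes^{\colax}\mathcal{Y}\to\mathcal{X}\times\mathcal{Y}$ is itself a localization at an explicit class --- but note its price: you need (a) that 2-morphism localizations are closed under colimits in $\Fun(C_{1},\CatIT)$ with localizing class the union of the images (cleanest proved directly from the mapping-space universal property, rather than via the coproducts-and-cobase-changes reduction you sketch), and, more substantially, (b) your recognition lemma that a functor bijective on objects and a localization on each mapping \icat{} is a localization of \itcats{}. Both are true, but neither is in the paper, and (b) is real work; the natural proof verifies the universal property levelwise on Segal objects (using that the object space is fixed, that maps over a space localize fibrewise, and that the conditions at higher simplicial levels follow from level one by the Segal condition) --- which is precisely the sort of pre-completion argument that the paper's detour through $\Seg_{\bTt^{\op}}(\mathcal{S})$ packages for you. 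So: correct in outline, with part (i) matching the paper and part (ii) a different decomposition whose supporting lemmas (which you rightly flag as the crux) must be supplied, whereas the paper's argument avoids them entirely.
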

\begin{proof}
  To prove (i) it suffices, since both sides preserve colimits in
  each variable,  to show that this morphism is an equivalence for
  $\mathcal{C}$ and $\mathcal{D}$ either $C_{0}$ or $C_{1}$. The only
  non-trivial case is $C_{1} \otimes^{\colax} C_{1} \to C_{1} \times
  C_{1}$, which indeed exhibits the commuting square $C_{1} \times
  C_{1}$ as obtained by inverting the unique 2-morphism in $C_{1}
  \otimes^{\colax} C_{1}$.
  
  To prove (ii), it suffices to prove the analogue of (ii) for the pairing $\otimes^{\colax}_{\Seg}$ on
  $\Seg_{\bbTheta_{2}^{\op}}(\mathcal{S})$, from which
  $\otimes^{\colax}$ is obtained by localization.
  This also preserves
  colimits in each variable, and $\iota_{1}$ on
  $\Seg_{\bbTheta_{2}^{\op}}(\mathcal{S})$ preserves colimits, so it
  suffices to check the square is a pushout for $\mathcal{X},
  \mathcal{Y}$ being either $C_{0}$, $C_{1}$ or $C_{2}$. Here it
  follows from the
  description of the Gray tensor product in \cref{defn:GrayTh2} that 
  \[
    \begin{tikzcd}
      \iota_{1}C_{i} \otimes^{\colax} \iota_{1}C_{j}
      \arrow{d} \arrow{r} & \iota_{1}C_{i} \times
      \iota_{1}C_{j} \arrow{d} \\
      C_{i} \otimes^{\colax} C_{j} \arrow{r} & C_{i}
      \times C_{j}
    \end{tikzcd}
  \]
  is a pushout square in $\Fun(\Dop, \mathcal{S})$ and hence in the
  localization $\CatIT$ since these are already local objects.
\end{proof}

Composing with the natural map from \cref{rmk:graytocart} we get for
any \itcats{} $\mathcal{X},\mathcal{Y},\mathcal{Z}$ a natural map
\[\Map_{\CatIT}(\mathcal{X} \times \mathcal{Z}, \mathcal{Y}) \to
  \Map_{\CatIT}(\mathcal{X}, \otimes^{\colax} \mathcal{Z},
  \mathcal{Y}),\]
which by adjunction induces a natural map
\[ \FUN(\mathcal{X}, \mathcal{Y}) \to
  \FUN(\mathcal{X},\mathcal{Y})_{\colax}.\]
We will now show that this identifies $\FUN(\mathcal{X},
\mathcal{Y})$ with a subobject of $\FUN(\mathcal{X},\mathcal{Y})_{\pcolax}$:
\begin{cor}
  There is a natural identification of $\FUN(\mathcal{X},
  \mathcal{Y})$ with the sub-\itcat{} of
  $\FUN(\mathcal{X}, \mathcal{Y})_{\pcolax}$ containing all objects,
  with  1-morphisms the
  (co)lax natural transformations all of whose (co)lax naturality
  squares commute, and all 2-morphisms between these.
\end{cor}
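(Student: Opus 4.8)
The plan is to analyze the natural functor $P\colon\FUN(\mathcal{X},\mathcal{Y})\to\FUN(\mathcal{X},\mathcal{Y})_{\colax}$ directly on mapping spaces. By adjunction $P$ sends an \itcat{} $\mathcal{A}$ to the map
\[ q_{\mathcal{A}}^{*}\colon\Map(\mathcal{X}\times\mathcal{A},\mathcal{Y})\longrightarrow\Map(\mathcal{X}\otimes^{\colax}\mathcal{A},\mathcal{Y}) \]
of precomposition along the natural morphism $q_{\mathcal{A}}\colon\mathcal{X}\otimes^{\colax}\mathcal{A}\to\mathcal{X}\times\mathcal{A}$ of \cref{rmk:graytocart}. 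Both $\FUN(\mathcal{X},\mathcal{Y})$ and $\FUN(\mathcal{X},\mathcal{Y})_{\colax}$ lie in $\Seg_{\bTt^{\op}}(\mathcal{S})$, so by the Segal conditions their values on any object $[k](n_{1},\ldots,n_{k})$ of $\bTt$ are iterated fibre products of their values on $C_{0}$, $C_{1}$, $C_{2}$; hence it suffices to understand $q_{\mathcal{A}}^{*}$ for $\mathcal{A}\in\{C_{0},C_{1},C_{2}\}$, compatibly with the structure maps among these. The colax statement will imply the lax one upon applying $(-)^{\twop}$ and \cref{cor:FUNlax2op}, since $(-)^{\twop}$ carries colax transformations with commuting naturality squares to lax ones with commuting naturality squares.

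For $\mathcal{A}=C_{0}$ the morphism $q_{C_{0}}$ is the unit equivalence $\mathcal{X}\otimes^{\colax}C_{0}\simeq\mathcal{X}\simeq\mathcal{X}\times C_{0}$, so $q_{C_{0}}^{*}$ is an equivalence and $P$ is an equivalence on objects, both being identified with the space of functors $\mathcal{X}\to\mathcal{Y}$. For $\mathcal{A}=C_{1}$ (where $\iota_{1}C_{1}=C_{1}$) and $\mathcal{A}=C_{2}$ (where $\iota_{1}C_{2}\simeq C_{1}\amalg_{C_{0}\amalg C_{0}}C_{1}$, the walking parallel pair of $1$-morphisms, using that $\otimes^{\colax}$ and $\times$ preserve colimits in each variable) I would apply $\Map(\blank,\mathcal{Y})$ to the pushout square of \cref{propn:laxtocart}(ii) to exhibit $\Map(\mathcal{X}\times\mathcal{A},\mathcal{Y})$ as the pullback
\[ \Map(\mathcal{X}\otimes^{\colax}\mathcal{A},\mathcal{Y})\times_{\Map(\iota_{1}\mathcal{X}\otimes^{\colax}\iota_{1}\mathcal{A},\mathcal{Y})}\Map(\iota_{1}\mathcal{X}\times\iota_{1}\mathcal{A},\mathcal{Y}), \]
with $q_{\mathcal{A}}^{*}$ the projection to the first factor. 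By \cref{propn:laxtocart}(i) the morphism $\iota_{1}\mathcal{X}\otimes^{\colax}\iota_{1}\mathcal{A}\to\iota_{1}\mathcal{X}\times\iota_{1}\mathcal{A}$ is the $(\infty,1)$-localization inverting all $2$-morphisms, so by the universal property of localization the other leg of this pullback is a monomorphism of spaces whose image is the functors inverting those $2$-morphisms; hence $q_{\mathcal{A}}^{*}$ is a monomorphism. For $\mathcal{A}=C_{1}$ its image consists of the colax transformations whose restriction to $\iota_{1}\mathcal{X}\otimes^{\colax}C_{1}$ inverts all the Gray square $2$-cells, \ie{} those whose naturality squares over all $1$-morphisms of $\mathcal{X}$ commute.

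For $\mathcal{A}=C_{2}$ I would use the inclusion $\iota_{1}C_{2}\hookrightarrow C_{2}$ of the source and target $1$-morphisms of the generating $2$-cell: restricting a colax $2$-morphism $\Gamma\colon\eta\Rightarrow\eta'$ along this, and then down to the underlying \icat{}, recovers the restrictions of $\eta$ and of $\eta'$ to $\iota_{1}\mathcal{X}$ (here one uses that $\iota_{1}\mathcal{X}$ has no non-degenerate $2$-cells, so that the two pieces of $\iota_{1}\mathcal{X}\otimes^{\colax}\iota_{1}C_{2}$ carry disjoint families of Gray $2$-cells). Thus the $C_{2}$ pullback square says that $\Gamma$ lies in the image of $q_{C_{2}}^{*}$ exactly when both $\eta$ and $\eta'$ lie in the image of $q_{C_{1}}^{*}$; equivalently, $\Map(C_{2},\FUN(\mathcal{X},\mathcal{Y}))$ is the full preimage, under the source–target map to $\Map(C_{1},\FUN(\mathcal{X},\mathcal{Y})_{\colax})\times_{\Map(C_{0},\FUN(\mathcal{X},\mathcal{Y})_{\colax})^{\times 2}}\Map(C_{1},\FUN(\mathcal{X},\mathcal{Y})_{\colax})$, of the subspace of pairs lying in $\Map(C_{1},\FUN(\mathcal{X},\mathcal{Y}))$. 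Read off over a fixed pair of objects $F,G$, this says the functor $\FUN(\mathcal{X},\mathcal{Y})(F,G)\to\FUN(\mathcal{X},\mathcal{Y})_{\colax}(F,G)$ is fully faithful.

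Finally I would assemble the pieces: the maps $q_{C_{i}}^{*}$ are natural in $\mathcal{A}$, hence compatible with the structure maps among $C_{0},C_{1},C_{2}$, and by the Segal conditions they assemble into a monomorphism in $\Seg_{\bTt^{\op}}(\mathcal{S})$ exhibiting $\FUN(\mathcal{X},\mathcal{Y})$ as the sub-\itcat{} of $\FUN(\mathcal{X},\mathcal{Y})_{\colax}$ with the same objects (the $C_{0}$ step), with $1$-morphisms the colax transformations with commuting naturality squares (the $C_{1}$ step), and with all $2$-morphisms between these, \ie{} full hom-categories (the $C_{2}$ step). I expect the main obstacle to be precisely that $C_{2}$ step: getting ``all $2$-morphisms between these'' rather than merely a non-full sub-$2$-category requires the decomposition of $\iota_{1}C_{2}$ into arrows together with the verification that the source and target of a colax $2$-morphism restrict to the expected transformations of the underlying \icats{}; the reduction to $C_{0},C_{1},C_{2}$ and the final assembly are routine given the Segal conditions.
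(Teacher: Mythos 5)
Your argument is correct, and it rests on the same two inputs as the paper's proof — both parts of \cref{propn:laxtocart} — but it is organized differently. The paper maps the pasted pushout squares (for the pairs $(\iota_{1}\mathcal{X},\iota_{1}\mathcal{Y})$, $(\iota_{1}\mathcal{X},\mathcal{Y})$, $(\mathcal{X},\mathcal{Y})$) into an arbitrary test \itcat{}, and reads off in one stroke that a functor $\mathcal{X}\to\FUN(\mathcal{Y},\mathcal{Z})_{\lax}$ factors through $\FUN(\mathcal{Y},\mathcal{Z})$ exactly when its restriction to $\iota_{1}\mathcal{X}$ does; this gives local full faithfulness (hence ``all 2-morphisms between these'') without ever decomposing a cell, and the description of the image of the 1-morphisms then comes from part (i) applied to $\iota_{1}\mathcal{X}\otimes^{\colax}\iota_{1}\mathcal{Y}$. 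You instead fix the source and evaluate the comparison map on the cells $C_{0},C_{1},C_{2}$, using the Segal conditions to assemble, which is perfectly legitimate but pushes the work into the $C_{2}$ level: there you need that a functor out of $\iota_{1}\mathcal{X}\otimes^{\colax}\iota_{1}C_{2}$ inverts all 2-morphisms as soon as its restrictions to the two copies of $\iota_{1}\mathcal{X}\otimes^{\colax}C_{1}$ do. Your parenthetical about ``disjoint families of Gray 2-cells'' is the right idea but should be justified, e.g.\ by noting that the $(\infty,1)$-localization of \cref{propn:laxtocart}(i) preserves the pushout decomposition $\iota_{1}C_{2}\simeq C_{1}\amalg_{C_{0}\amalg C_{0}}C_{1}$ (equivalently, invertible 2-morphisms are closed under composition and whiskering), so the two factorizations glue; with that said, your proof goes through, and your use of the localization property against an $(\infty,2)$-categorical target is at the same level of rigour as the paper's own appeal to \cref{propn:laxtocart}(i). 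The trade-off is that the paper's uniform test-object argument is shorter and avoids the $C_{2}$ bookkeeping, while your cellwise version makes the subcategory description at each simplicial level completely explicit; your reduction of the lax case to the colax one via $(\blank)^{\twop}$ and \cref{cor:FUNlax2op} is also fine.
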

\begin{proof}
  Let $\mathcal{X}$ and $\mathcal{Y}$ be \itcats{}, and consider the
  commutative diagram
  \[
    \begin{tikzcd}
      \iota_{1}\mathcal{X} \otimes^{\colax} \iota_{1}\mathcal{Y}
      \arrow{r} \arrow{d} & \iota_{1}\mathcal{X} \times
      \iota_{1}\mathcal{Y} \arrow{d} \\
      \iota_{1}\mathcal{X} \otimes^{\colax}\mathcal{Y} \arrow{r}
      \arrow{d} & \iota_{1}\mathcal{X} \times \mathcal{Y} \arrow{d} \\
      \mathcal{X} \otimes^{\colax} \mathcal{Y} \arrow{r} & \mathcal{X}
      \times \mathcal{Y}.
    \end{tikzcd}
  \]
  Here the top square and the outer square are pushouts by
  \cref{propn:laxtocart}, hence so is the bottom
  square. Given a third \itcat{} $\mathcal{Z}$ we obtain a commutative
  diagram 
  \[
    \begin{tikzcd}
    \Map(\mathcal{X} \times \mathcal{Y}, \mathcal{Z}) \arrow{r}
    \arrow{d} & \Map(\mathcal{X} \otimes^{\colax} \mathcal{Y},
    \mathcal{Z}) \arrow{d} \\
    \Map(\iota_{1}\mathcal{X} \times \mathcal{Y}, \mathcal{Z})
    \arrow{d} \arrow{r} & \Map(\iota_{1}\mathcal{X} \otimes^{\colax} \mathcal{Y},
    \mathcal{Z}) \arrow{d} \\
    \Map(\iota_{1} \mathcal{X} \times \iota_{1}\mathcal{Y},
    \mathcal{Z}) \arrow{r} & \Map(\iota_{1}\mathcal{X} \otimes^{\colax}
    \iota_{1}\mathcal{Y}, \mathcal{Z}),
  \end{tikzcd}
\]
  where all squares are cartesian. We can rewrite this as
  \[
    \begin{tikzcd}
    \Map(\mathcal{X}, \FUN(\mathcal{Y}, \mathcal{Z})) \arrow{r} \arrow{d} &
    \Map(\mathcal{X}, \FUN(\mathcal{Y}, \mathcal{Z})_{\lax})
    \arrow{d} \\
        \Map(\iota_{1}\mathcal{X}, \FUN(\mathcal{Y}, \mathcal{Z})) \arrow{r} \arrow{d}&
    \Map(\iota_{1}\mathcal{X}, \FUN(\mathcal{Y},
    \mathcal{Z})_{\lax}) \arrow{d} \\
    \Map(\iota_{1}\mathcal{X}, \FUN(\iota_{1}\mathcal{Y}, \mathcal{Z})
    \arrow{r} & \Map(\iota_{1}\mathcal{X}, \FUN(\iota_{1}\mathcal{Y},
    \mathcal{Z})_{\lax}).
  \end{tikzcd}
\]
This says, firstly, that a functor $\mathcal{X} \to \FUN(\mathcal{Y},
\mathcal{Z})_{\lax}$ factors through $\FUN(\mathcal{Y},
\mathcal{Z})$ \IFF{} its restriction to the underlying \icat{}
$\iota_{1}\mathcal{X}$ does so. In other words, $\FUN(\mathcal{Y},
\mathcal{Z}) \to \FUN(\mathcal{Y},\mathcal{Z})_{\lax}$ is locally
fully faithful. Furthermore, a functor from $\iota_{1}\mathcal{X}$
factors through $\FUN(\mathcal{Y}, \mathcal{Z})$ \IFF{} the induced
functor to $\FUN(\iota_{1}\mathcal{Y}, \mathcal{Z})_{\lax}$ factors
through $\FUN(\iota_{1}\mathcal{Y}, \mathcal{Z})$, which we can
interpret via \cref{propn:laxtocart}(i) as saying that the
adjoint functor $\iota_{1}\mathcal{X} \otimes^{\colax}
\iota_{1}\mathcal{Y} \to \mathcal{Z}$ takes all 2-morphisms in
$\iota_{1}\mathcal{X} \otimes^{\colax} \iota_{1}\mathcal{Y}$ to
equivalences in $\mathcal{Z}$, as required.
\end{proof}
Finally, we note the following colimit decomposition of the Gray
tensor product of the generators $C_{i}$:
\begin{lemma}\label{lem:graytenscolim}
  We have the following colimit decompositions in
  $\Seg_{\bbTheta_{2}^{\op}}(\mathcal{S})$ (and hence in $\CatIT$):  
  \[ C_{1}\otimes^{\colax} C_{1} \simeq {[2]}(0,0) \cup_{C_{1}} C_{2}
    \cup_{C_{1}} [2](0,0),\]
  \[ C_{2}\otimes^{\colax} C_{1} \simeq {[2]}(1,0) \cup_{C_{2}}
    [1]([1]^{2}) \cup_{C_{2}} {[2]}(0,1),\]
  \[ C_{2}\otimes^{\colax} C_{2} \simeq {[2]}(1,1) \cup_{[1]([1]^{2})} [1]([1]^{3})
    \cup_{[1]([1]^{2})} {[2]}(1,1).\]
  where the maps in the colimits are the obvious ones.
\end{lemma}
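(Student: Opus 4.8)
The plan is to reduce all three identities to the explicit combinatorics of the Gray tensor product on $\bTt$ from \cref{defn:GrayTh2}. Since $C_1 = [1](0)$ and $C_2 = [1](1)$ are gaunt, \cref{ass:gray}(3) — which, as the footnote notes, is only needed for gaunt $2$-categories — identifies each left-hand side $C_i \otimes^{\colax} C_j$ with the gaunt $2$-category described there, with object set $\{00,01,10,11\}$ and hom-posets read off from the formula: for $C_1 \otimes^{\colax} C_1$ this is the colax square of the Examples; for $C_2 \otimes^{\colax} C_1$ the two side homs $\Hom(00,10)$ and $\Hom(01,11)$ are $[1]$ and the corner hom is $\operatorname{MaxCh}([1]{\times}[1]) \times [1] \cong [1]^2$; and for $C_2 \otimes^{\colax} C_2$ all four side homs are $[1]$ and the corner hom is $\operatorname{MaxCh}([1]{\times}[1]) \times [1] \times [1] \cong [1]^3$. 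Dually, the terms on the right are all gaunt: $[2](m_1,m_2)$ is the $2$-category of \cref{defn:[n]free}, and $[1]([1]^k)$ is the $2$-category with two objects and hom-poset the $k$-cube $[1]^k$.

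Next I would exhibit, in each case, the maps in the colimit as inclusions of sub-$2$-categories of $C_i \otimes^{\colax} C_j$: the two outer terms $[2](m_1,m_2)$ are the full sub-$2$-categories on the objects $\{00,10,11\}$ and $\{00,01,11\}$ — the two composite triangles/prisms running through $10$ and through $01$ — the middle term ($C_2$, resp.\ $[1]([1]^2)$, resp.\ $[1]([1]^3)$) is the full sub-$2$-category on $\{00,11\}$, and the gluing maps are the evident inclusions of a sub-edge, sub-$2$-cell or sub-face visible in the displayed pictures ($C_1 \hookrightarrow [2](0,0)$ and $C_1 \hookrightarrow C_2$ for the first identity; $C_2 \hookrightarrow [2](m_1,m_2)$ and $C_2 \hookrightarrow [1]([1]^2)$ for the second; $[1]([1]^2) \hookrightarrow [2](1,1)$ and $[1]([1]^2) \hookrightarrow [1]([1]^3)$ for the third). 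One then checks directly, from \cref{defn:GrayTh2} and \cref{defn:[n]free}, that the canonical map out of the colimit is an isomorphism: the object sets glue to $\{00,01,10,11\}$, and the hom-posets glue correctly — for instance the corner hom of $C_2 \otimes^{\colax} C_1$ is reassembled from the square $[1]^2$ with its two opposite edges, and that of $C_2 \otimes^{\colax} C_2$ from the cube $[1]^3$ with two opposite square faces. Crucially, every one of these gluings is \emph{cellular}: over the shared objects the colimit acquires no composites or cells beyond those already present in the pieces, so it is computed levelwise on objects and hom-posets.

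Finally I would promote these identities of gaunt $2$-categories to colimit diagrams in $\Seg_{\bbTheta_{2}^{\op}}(\mathcal{S})$. All the $2$-categories in sight are gaunt, hence complete, and the diagrams are colimit diagrams in $\CatIT$ — here the cellular nature is used, as the pushouts can then be computed objectwise in the description of $\CatIT$ as $\CatI$-enriched categories — so the remark of Ayala–Francis recalled above applies: a colimit diagram in $\CatIT$ whose underlying diagrams of $\infty$-categories and $\infty$-groupoids are colimit diagrams is already a colimit diagram in $\Seg_{\bbTheta_{2}^{\op}}(\mathcal{S})$. The underlying $\infty$-groupoid diagrams are the diagrams of object sets, colimits by inspection; the underlying $\infty$-category diagrams are obtained by replacing each hom-poset by its underlying set, which are again cellular pushouts of $1$-categories and hence colimits in $\CatI$. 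The main obstacle is the bookkeeping in the middle step — pinning down the correct sub-$2$-categories and gluing maps, especially which two faces of the cube $[1]^3$ are glued to the copies of $[2](1,1)$ — together with the routine but essential verification that no new cells appear, which is exactly what licenses moving freely between the strict, $\CatIT$- and $\Seg_{\bbTheta_{2}^{\op}}(\mathcal{S})$-level colimits.
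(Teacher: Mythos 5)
Your combinatorial picture of $C_i\otimes^{\colax}C_j$ and of the three decompositions is essentially right (one slip: the outer pieces $[2](m_1,m_2)$ are \emph{not} full sub-2-categories on $\{00,10,11\}$ and $\{00,01,11\}$ --- a full subcategory would contain the whole hom from $00$ to $11$; they are the non-full ``composite triangle/prism'' subcategories, as your parenthetical description suggests). The genuine gap is in the promotion step. You assert that because the gluing is ``cellular'', the pushouts ``can be computed objectwise in the description of $\CatIT$ as $\CatI$-enriched categories'', and likewise that the underlying diagrams of $1$-categories are colimits in $\CatI$ ``by inspection''. There is no such general principle: pushouts of ($\infty$-)categories are not computed objectwise on mapping objects, even along fully faithful inclusions (already $[1]\amalg_{[0]}[1]\simeq[2]$ in $\CatI$ creates a morphism present in neither piece). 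Saying ``no new composites or cells appear'' is a description of the expected answer, not a hypothesis you can invoke to compute the $\infty$-categorical colimit; some actual computation of the pushout is required. Moreover your logical route is backwards relative to the statement: you try to establish the colimit in $\CatIT$ first and then deduce the $\Seg_{\bbTheta_2^{\op}}(\mathcal{S})$-statement via the Ayala--Francis remark, but that remark needs the $\CatIT$-colimit as input, which is exactly the step left unjustified, while the lemma's ``and hence in $\CatIT$'' goes in the opposite (easy) direction, since the localization $\Seg_{\bbTheta_2^{\op}}(\mathcal{S})\to\CatIT$ preserves colimits.

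The paper's proof closes this gap in the cheapest possible way: it checks that each square is a colimit already in the presheaf category $\Fun(\bbTheta_2^{\op},\mathcal{S})$, where colimits are computed levelwise. Since every object in sight is gaunt, the value at each $\theta\in\bbTheta_2$ is a set of $2$-functors, and one verifies directly that $\Hom(\theta, C_i\otimes^{\colax}C_j)$ is the corresponding pushout of sets (every map from $\theta$ factors through one of the pieces, compatibly on the overlap) --- this is where your cellularity observation does real work. Because the resulting presheaf is the Gray tensor product, hence Segal, the diagram is then automatically a colimit in the full subcategory $\Seg_{\bbTheta_2^{\op}}(\mathcal{S})$, and hence in $\CatIT$ after localization, with no appeal to Ayala--Francis or to enriched pushouts. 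If you replace your final paragraph by this levelwise check on $\bbTheta_2$ (which subsumes the checks you wanted to do on objects and underlying categories), your argument becomes the paper's.
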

\begin{proof}
  It suffices to prove that these give colimit diagrams in
  $\Seg_{\bbTheta_{2}^{\op}}(\mathcal{S})$. But in fact in all three
  cases it is easy to see that we have a colimit diagram already in
  the \icat{} $\Fun(\bbTheta_{2}^{\op},\mathcal{S})$ of presheaves.
\end{proof}

\section{Lax Morphisms of Adjunctions}\label{sec:adj}
In this section we will study (co)lax morphisms of adjunctions in an
\itcat{}, which arise as a special case of (co)lax natural
transformations:
\begin{notation}
  Let $\fadj$ denote the ``walking adjunction'' 2-category, i.e.~the
  free 2-category containing an adjunction. Following
  \cite{SchanuelStreet}, Riehl and
  Verity~\cite{RiehlVerityAdj} give a combinatorial description of
  this 2-category; we will not recall this here, but for notational
  convenience we will name the lower-dimensional parts of the
  category: it has two objects, $-$ and $+$, and morphisms are
  generated by $l \colon - \to +$ (the left adjoint) and
  $r \colon + \to -$ (the right adjoint).
\end{notation}
\begin{defn}
  Let $\mathcal{X}$ be an \itcat{}. An \emph{adjunction} in
  $\mathcal{X}$ is a functor of \itcats{} $\fadj \to \mathcal{X}$, and
  a \emph{(co)lax morphism} of adjunctions is a (co)lax natural
  transformation between adjunctions, \ie{} a functor
  \[ \fadj \otimes^{\pcolax} C_{1} \to \mathcal{X}.\] We write
  $\ADJ(\mathcal{X})_{\pcolax} := \FUN(\fadj, \mathcal{X})_{\pcolax}$
  for the \itcat{} of adjunctions in $\mathcal{X}$ and (co)lax
  morphisms between them, and $\Adj(\mathcal{X})_{\pcolax}$ for the
  underlying \icat{}.
\end{defn}

\begin{remark}\label{rmk:adjsymm}
The symmetry of the definition of $\fadj$ gives
equivalences
\begin{itemize}
\item $\fadj^{\twop} \simeq \fadj$, interchanging $-$ and $+$ and
  swapping $l$ and $r$,  
\item $\fadj^{\op} \simeq \fadj$, fixing the objects but interchanging $l$ and $r$,
\end{itemize}
Combined with \cref{cor:FUNlax2op}, the first gives a natural
equivalence
\[ \ADJ(\mathcal{X})_{\lax}^{\twop} \simeq
  \ADJ(\mathcal{X}^{\twop})_{\colax}.\]
\end{remark}

For ordinary 2-categories, one can show that
\begin{itemize}
\item a lax morphism of adjunctions corresponds to
  a commutative square of right adjoints,
\item a colax morphism of adjunctions corresponds to
  a commutative square of left adjoints.
\end{itemize}
Our goal in this section is to extend these equivalences to the \itcatl{}
setting, \ie{} to identify the \itcats{} $\ADJ(\mathcal{X})_{\pcolax}$
with the full subcategories of the arrow \itcat{} $\FUN(C_{1},
\mathcal{X})$ spanned by the morphisms that are right and left
adjoints, respectively. Our starting point is the following result of Riehl and Verity:
\begin{thm}[Riehl--Verity \cite{RiehlVerityAdj}]\label{thm:rvadj}
  Let $\mathcal{X}$ be an $(\infty,2)$-category, and denote by
 $\Map(\Delta^{1}, \mathcal{X})^{\ladj}$ and $\Map(\Delta^{1},
 \mathcal{X})^{\radj}$ the subspaces of $\Map(\Delta^{1},
 \mathcal{X})$ consisting of those components that correspond to left and right
 adjoint 1-morphisms, respectively. Then the maps
 \[\Map(\mathfrak{adj}, \mathcal{X}) \to \Map(\Delta^{1},
    \mathcal{X})^{\ladj}, \quad \Map(\mathfrak{adj}, \mathcal{X}) \to \Map(\Delta^{1},
    \mathcal{X})^{\radj}\] given by evaluation at the morphisms $l$ and
  $r$, respectively, are both equivalences.
\end{thm}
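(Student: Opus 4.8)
The plan is to reduce to the case $\mathcal{X} = \CATI$, where the statement is essentially a theorem of Riehl and Verity, and then to bootstrap to a general \itcat{} $\mathcal{X}$ via the Yoneda embedding. For $\mathcal{X} = \CATI$: the \itcat{} $\CATI$ is presented by the Riehl--Verity $\infty$-cosmos of quasi-categories, Riehl and Verity's free homotopy coherent adjunction (the combinatorial object of \cite{SchanuelStreet,RiehlVerityAdj}) is a model for $\fadj$, and the main results of \cite{RiehlVerityAdj} on the homotopy coherent theory of adjunctions say precisely that the space of homotopy coherent adjunctions extending a fixed 1-morphism which is a left (resp.\ right) adjoint is contractible --- equivalently, that evaluation at $l$ (resp.\ $r$) is a trivial fibration from that space onto $\Map(\Delta^{1},\CATI)^{\ladj}$ (resp.\ $\Map(\Delta^{1},\CATI)^{\radj}$), which is exactly the assertion of the theorem when $\mathcal{X} = \CATI$. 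For the bootstrap I will want the mild strengthening that evaluation at $l$ induces an equivalence of \itcats{} $\FUN(\fadj,\CATI) \simeq \FUN(\Delta^{1},\CATI)^{\ladj}$ onto the full sub-\itcat{} spanned by the left-adjoint arrows (and similarly for $r$); since Riehl and Verity argue with the simplicially enriched hom rather than just its set of components, this too comes out of their analysis.

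To treat an arbitrary \itcat{} $\mathcal{X}$ I would apply the Yoneda embedding $y \colon \mathcal{X} \hookrightarrow \mathcal{P}(\mathcal{X}) := \FUN(\mathcal{X}^{\onop},\CATI)$, which is fully faithful. Two standard properties of fully faithful functors then reduce the claim for $\mathcal{X}$ to the claim for $\mathcal{P}(\mathcal{X})$: first, they reflect adjunctions --- if $y(f)$ has a right adjoint $g'$ in $\mathcal{P}(\mathcal{X})$, then $g' \simeq y(g)$ and the unit and counit descend, as $y$ identifies the relevant hom-\icats{}, so $f$ is a left adjoint in $\mathcal{X}$ exactly when $y(f)$ is one in $\mathcal{P}(\mathcal{X})$; secondly, for every \itcat{} $A$ the induced functor $\FUN(A,\mathcal{X}) \to \FUN(A,\mathcal{P}(\mathcal{X}))$ is again fully faithful, hence on underlying \igpds{} an inclusion of path components, compatibly with evaluation at $l$ and $r$. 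With $A = \fadj$ and $A = \Delta^{1}$ these identify $\Map(\fadj,\mathcal{X})$ and $\Map(\Delta^{1},\mathcal{X})^{\ladj}$ with the unions of components of $\Map(\fadj,\mathcal{P}(\mathcal{X}))$ and $\Map(\Delta^{1},\mathcal{P}(\mathcal{X}))^{\ladj}$ lying over $\mathcal{X}$, so it is enough to prove the theorem for $\mathcal{P}(\mathcal{X})$. Since $\mathcal{P}(\mathcal{X})$ is a functor \itcat{} into $\CATI$, adjunctions in it are detected pointwise (the componentwise right adjoints assembling by uniqueness of adjoints); using the Cartesian closure of $\CatIT$ to rewrite $\Map(\fadj,\FUN(\mathcal{X}^{\onop},\CATI))$ as $\Map(\mathcal{X}^{\onop},\FUN(\fadj,\CATI))$ and $\Map(\Delta^{1},\FUN(\mathcal{X}^{\onop},\CATI))^{\ladj}$ as $\Map(\mathcal{X}^{\onop},\FUN(\Delta^{1},\CATI)^{\ladj})$, the case of $\mathcal{P}(\mathcal{X})$ follows from the equivalence of functor \itcats{} $\FUN(\fadj,\CATI) \simeq \FUN(\Delta^{1},\CATI)^{\ladj}$ of the previous paragraph.

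The step I expect to be the genuine obstacle is the interface between the two settings: one must verify that the Riehl--Verity $\infty$-cosmos of quasi-categories computes mapping spaces and hom-\icats{} correctly in the model of \itcats{} used here, and one must promote their contractibility statement to the asserted equivalence of functor \itcats{} over $\CATI$ --- the version for underlying \igpds{} alone is genuinely insufficient for the bootstrap, since evaluation at $l$ on $\mathcal{P}(\mathcal{X})$ uses all of $\FUN(\fadj,\CATI)$, not just its underlying \igpd{}. An alternative that sidesteps $\infty$-cosmoi would be to reprove the contractibility directly in an arbitrary \itcat{} $\mathcal{X}$: filter $\fadj$ by the cells of its Schanuel--Street computad presentation so that $\Map(\fadj,\mathcal{X})$ becomes a sequential limit of pullbacks of maps of the form $\Map(C_{k},\mathcal{X}) \to \Map(\partial C_{k},\mathcal{X})$, and show --- following Riehl and Verity's combinatorics --- that over the locus where $l$ is equipped with the data exhibiting it as a left adjoint each stage of the limit is an equivalence. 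This argument works verbatim over any \itcat{}, at the price of reproducing a fair amount of bookkeeping.
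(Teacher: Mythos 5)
There is a genuine gap, and it sits exactly at the point your last paragraph identifies as delicate. First, a contextual remark: the paper gives no proof of \cref{thm:rvadj} at all --- it is imported from \cite{RiehlVerityAdj}, whose results on homotopy coherent adjunctions apply to any quasi-categorically enriched category (no $\infty$-cosmos structure is needed for this statement), hence to any \itcat{} after rectification; so no specialization to $\CATI$ and no Yoneda bootstrap is required. But the bootstrap you propose is not just unnecessary, it is broken. The ``mild strengthening'' you want --- that evaluation at $l$ gives an equivalence of \itcats{} $\FUN(\fadj,\CATI) \simeq \FUN(C_{1},\CATI)^{\ladj}$ onto the \emph{full} sub-\itcat{} of left-adjoint arrows --- is false. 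A $1$-morphism in $\FUN(\fadj,\CATI)$ is an ordinary natural transformation of adjunctions, and this forces the naturality square for $r$ to be the mate of the naturality square for $l$ and to be \emph{invertible}; a general commutative square of left adjoints does not have an invertible mate. Concretely, for any adjunction $l \dashv r$ whose counit is not invertible, the square with top $l$, bottom the identity, and vertical maps $l$ and $\id$ commutes, but its mate is the counit $lr \to \id$, so it does not lift to $\FUN(\fadj,\CATI)$. The correct statements are the lax/colax ones, $\ADJ(\mathcal{X})_{\lax}\simeq \FUN(C_{1},\mathcal{X})_{\radj}$ and $\ADJ(\mathcal{X})_{\colax}\simeq \FUN(C_{1},\mathcal{X})_{\ladj}$ (\cref{thm:adjlaxradj}, \cref{cor:ADJlaxeq}), and those are \emph{deduced from} \cref{thm:rvadj}, so they cannot be fed back into a proof of it.

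The same issue invalidates the reduction for $\mathcal{P}(\mathcal{X})=\FUN(\mathcal{X}^{\onop},\CATI)$: adjunctions in a functor \itcat{} are \emph{not} detected pointwise. If every component of a natural transformation is a left adjoint, the componentwise right adjoints assemble only into a \emph{lax} transformation; one gets a genuine right adjoint in $\FUN(\mathcal{X}^{\onop},\CATI)$ precisely when all mate squares are invertible (doctrinal adjunction; this is the content of \cref{propn:laxfunadj}). Consequently your rewriting of $\Map(\Delta^{1},\mathcal{P}(\mathcal{X}))^{\ladj}$ as $\Map(\mathcal{X}^{\onop},\FUN(\Delta^{1},\CATI)^{\ladj})$ conflates ``left adjoint in the functor \itcat{}'' with ``pointwise left adjoint'', and the final step then needs exactly the false equivalence above. (The first half of the bootstrap --- full faithfulness of an $(\infty,2)$-categorical Yoneda embedding, reflection of adjunctions, fully faithfulness of $\FUN(A,-)$ --- is plausible in outline but is itself a nontrivial input not developed in this paper, and it buys nothing once one observes that Riehl--Verity's theorem already holds at the level of generality of the statement. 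The concluding ``alternative'' via the Schanuel--Street computad filtration is, as you say, a promise to redo Riehl--Verity's combinatorics rather than an argument.)
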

Our description of the \itcats{} $\ADJ(\mathcal{X})_{\pcolax}$ will
follow from a description of certain adjoints in \itcats{} of the form
$\FUN(\mathcal{Y}, \mathcal{X})_{\pcolax}$. To state this we need some terminology:
\begin{remark}\label{rmk:mate}
  Given a colax square
  \begin{equation}
    \label{eq:colaxsq}
    \begin{tikzcd}
      A \arrow[swap]{d}{a} \arrow{r}{l} & B
      \arrow{d}{b} \arrow[Leftarrow, shorten >= 6pt,shorten <= 6pt]{dl}{\phi}\\
      A' \arrow[swap]{r}{l'} & B'
    \end{tikzcd}
  \end{equation}
in some \itcat{}, where $\phi$ is a 2-morphism $l'a \to bl$
and the morphisms $l$ and $l'$ have right adjoints $r$ and $r'$,
respectively, then the \emph{mate} of $\phi$ is the 
transformation $ar \to r'b$ given by the composite
\[ ar \to r'l'ar \xto{r' \phi r} r'blr \to r'b,\]
using the unit $\id \to r'l'$ and the counit $lr \to \id$. We can
depict this as a \emph{lax} square
\begin{equation}
  \label{eq:laxsq}
\begin{tikzcd}
  B \arrow[swap]{d}{b} \arrow{r}{r} & A
  \arrow{d}{a} \arrow[Rightarrow, shorten >= 6pt,shorten <= 6pt]{dl}\\
 B' \arrow[swap]{r}{r'} & B,
\end{tikzcd}
\end{equation}
Similarly, given a lax square \cref{eq:laxsq} whose horizontal
morphisms are right adjoints, we can produce a colax mate square
\cref{eq:colaxsq}. The adjunction identities moreover imply that
taking mates twice gives back the original square.
\end{remark}

\begin{thm}\label{propn:laxfunadj}
  Let $\mathcal{X}$ be an \itcat{}.
  \begin{enumerate}[(i)]
  \item A 1-morphism in $\FUN(\mathcal{Y}, \mathcal{X})_{\lax}$, \ie{} a lax natural
    transformation $\phi \colon F \to G$, is a left adjoint
    \IFF{} 
    for every $Y \in \mathcal{Y}$ the morphism $\phi_{Y} \colon F(Y)
    \to G(Y)$ is a left adjoint, and the lax square 
    \[
      \begin{tikzcd}
        F(Y) \arrow{r} \arrow{d} & G(Y) \arrow{d}
        \arrow[dl,Rightarrow,inner sep=2pt]\\
        F(Y') \arrow{r} & G(Y')
      \end{tikzcd}
    \]
    commutes for all morphisms $Y \to Y'$ in $\mathcal{Y}$.  In this
    case the right adjoint is given by the mate of this square, which is also a lax square.
  \item A 1-morphism in $\FUN(\mathcal{Y}, \mathcal{X})_{\colax}$,
    \ie{} a colax natural
    transformation $\phi \colon F \to G$, is a right adjoint
    \IFF{} 
    for every $Y \in \mathcal{Y}$ the morphism $\phi_{Y} \colon F(Y)
    \to G(Y)$ is a right adjoint, and the colax square 
    \[
      \begin{tikzcd}
        F(Y) \arrow{r} \arrow{d} & G(Y) \arrow{d}
        \arrow[dl,Leftarrow,inner sep=2pt]\\
        F(Y') \arrow{r} & G(Y')
      \end{tikzcd}
    \]
    commutes for all morphisms $Y \to Y'$ in $\mathcal{Y}$.  In this
    case the left adjoint is given by the mate of this square, which
    is also a colax square.
  \end{enumerate}
\end{thm}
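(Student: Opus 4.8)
The plan is to reduce the statement about adjunctions in the $(\infty,2)$-category $\FUN(\mathcal{Y},\mathcal{X})_{\pcolax}$ to a pointwise statement, using the fact that adjunctions are detected by the walking adjunction $\fadj$ together with the characterization of morphisms into Gray-tensor internal homs. The two parts are dual via \cref{cor:FUNlax2op} and \cref{rmk:adjsymm}-style symmetry (replacing $\mathcal{X}$ by $\mathcal{X}^{\twop}$ and $\mathcal{Y}$ by $\mathcal{Y}^{\twop}$), so I would prove (i) in detail and deduce (ii) formally. For (i): a lax natural transformation $\phi\colon F\to G$ is a functor $\mathcal{Y}\otimes^{\lax}C_{1}\to\mathcal{X}$, and asking that $\phi$ be a left adjoint in $\FUN(\mathcal{Y},\mathcal{X})_{\lax}$ is, by definition of adjunction in an $(\infty,2)$-category, asking for an extension along $C_{1}\hookrightarrow\fadj$ of the classifying functor $\fadj\to\FUN(\mathcal{Y},\mathcal{X})_{\lax}$ — equivalently, by adjunction, a functor $\mathcal{Y}\otimes^{\lax}\fadj\to\mathcal{X}$ restricting to $\phi$ along $\mathcal{Y}\otimes^{\lax}C_{1}$.

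The key computational input is therefore a description of $\mathcal{Y}\otimes^{\lax}\fadj$, and the cleanest route is to avoid describing it globally: instead, note that $\fadj$ is generated under colimits by its cells, but more usefully, apply \cref{thm:rvadj}. The point is that $\Map(\fadj,-)$ picks out right adjoints (resp.\ left adjoints) in \emph{any} $(\infty,2)$-category, and this is exactly what lets us identify the space of left adjoints in $\FUN(\mathcal{Y},\mathcal{X})_{\lax}$. So the structure of the argument is: (1) unwind that $\phi$ is a left adjoint iff the map $C_{1}\to\FUN(\mathcal{Y},\mathcal{X})_{\lax}$ classifying $\phi$ lies in the image of $\Map(\fadj,\FUN(\mathcal{Y},\mathcal{X})_{\lax})\to\Map(C_{1},\FUN(\mathcal{Y},\mathcal{X})_{\lax})^{\ladj}$ — here we use \cref{thm:rvadj} applied to the $(\infty,2)$-category $\FUN(\mathcal{Y},\mathcal{X})_{\lax}$; (2) transpose across the Gray adjunction so that this becomes a lifting problem for a functor out of $\mathcal{Y}\otimes^{\lax}C_{1}$ to one out of $\mathcal{Y}\otimes^{\lax}\fadj$; (3) reduce the lifting problem to $\mathcal{Y}$ being a generator $C_{i}$, or better, to evaluation at each object $Y\in\mathcal{Y}$ and each morphism $Y\to Y'$, since $\mathcal{Y}$ is generated under colimits by $C_{0}$ and $C_{1}$ and $\otimes^{\lax}C_{1}$ preserves colimits in the first variable.

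Concretely, evaluating $\phi$ at an object $Y$ gives the component $\phi_{Y}\colon F(Y)\to G(Y)$, a morphism in $\mathcal{X}$; requiring $\phi$ to come from $\fadj$ forces each $\phi_{Y}$ to be a left adjoint (pull back along $C_{0}\otimes^{\lax}C_{1}\to\mathcal{Y}\otimes^{\lax}C_{1}$ for each $Y$), and evaluating at a morphism $Y\to Y'$ gives a lax naturality square $\Sigma_{f}$; the mate construction of \cref{rmk:mate} produces from $\Sigma_{f}$ a lax square of the right adjoints $\phi_{Y'}^{R}$, and the content of "$\phi$ is a left adjoint" is precisely that these mate squares assemble into a lax natural transformation $G^{R}\to F^{R}$ that serves as the right adjoint — which by the adjunction identities (the last sentence of \cref{rmk:mate}, "taking mates twice gives back the original square") is equivalent to the original lax squares $\Sigma_{f}$ being \emph{invertible}, i.e.\ commutative. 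Spelling this out requires checking that the unit and counit 2-morphisms of the adjunction $\phi\dashv\phi^{R}$ in $\FUN(\mathcal{Y},\mathcal{X})_{\lax}$ — which are 2-morphisms, hence functors $\mathcal{Y}\otimes^{\lax}C_{2}\to\mathcal{X}$ — can be built from the pointwise units and counits together with the mate data, using the colimit decompositions of \cref{lem:graytenscolim} to see that a 2-morphism in $\FUN(\mathcal{Y},\mathcal{X})_{\lax}$ out of $\mathcal{Y}$ is determined by its restrictions to the objects and morphisms of $\mathcal{Y}$.

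I expect the main obstacle to be step (3)'s bookkeeping: showing that the naively pointwise data (an adjunction $\phi_{Y}\dashv\phi_{Y}^{R}$ for each $Y$, plus mate squares, plus the compatibility of units/counits) genuinely glues to a single adjunction in $\FUN(\mathcal{Y},\mathcal{X})_{\lax}$ rather than just a compatible family, and in particular verifying the triangle identities at the level of $\mathcal{Y}\otimes^{\lax}C_{2}$. This is where \cref{thm:rvadj} does the heavy lifting: because $\Map(\fadj,-)\to\Map(C_{1},-)^{\ladj}$ is an \emph{equivalence} (not merely essentially surjective), once we know each component is a left adjoint and the naturality squares are invertible, the lift to $\fadj$ exists and is unique up to contractible choice, so there is nothing further to check — the pointwise adjunction data automatically refines. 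Thus the real work is just the equivalence "all naturality squares invertible $\iff$ the mate squares define a genuine lax transformation $G^{R}\to F^{R}$," which follows from the double-mate identity, and the identification of the right adjoint with the mate square, which is then read off from chasing the counit $\phi\phi^{R}\to\id$ through evaluation at a morphism.
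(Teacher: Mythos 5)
Your scaffolding matches the paper's: the duality reduction via \cref{cor:FUNlax2op}, the identification of the putative right adjoint with the mate square, and the reduction to the generators $C_{0},C_{1},C_{2}$ (because the space of left adjoints in $\FUN(\mathcal{Y},\mathcal{X})_{\lax}$, being $\Map(\mathcal{Y}\otimes^{\lax}\fadj,\mathcal{X})$ by \cref{thm:rvadj}, turns colimits in $\mathcal{Y}$ into limits) are all exactly how the paper proceeds. The gap is your final step, where you claim that \cref{thm:rvadj} means ``there is nothing further to check.'' Riehl--Verity's theorem, applied to the \itcat{} $\FUN(\mathcal{Y},\mathcal{X})_{\lax}$, says that the space of functors $\fadj\to\FUN(\mathcal{Y},\mathcal{X})_{\lax}$ is equivalent to the space of 1-morphisms that \emph{already are} left adjoints in that \itcat{}; it gives contractibility of the space of adjunction data over a given left adjoint, but it says nothing about \emph{which} lax transformations are left adjoints. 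That criterion is precisely the statement being proved, so invoking \cref{thm:rvadj} to conclude that ``once each component is a left adjoint and the naturality squares are invertible, the lift to $\fadj$ exists'' assumes the sufficiency direction rather than proving it; equivalently, it would amount to already knowing \cref{cor:ADJlaxeq}, which the paper deduces \emph{from} this theorem, not the other way around.

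Concretely, after reducing to $\mathcal{Y}=C_{1}$ and $C_{2}$ one still has to exhibit the adjunction by hand: take the mate square as the right adjoint, construct the unit and counit as 2-morphisms, i.e.\ functors $\mathcal{Y}\otimes^{\lax}C_{2}\to\mathcal{X}$, using the decompositions of \cref{lem:graytenscolim} together with the naturality and invertibility data, and then verify the triangle identities (the paper does this by writing the pointwise units and counits as mates of identity squares and using compatibility of mates with pasting, so that the triangle composites are mates of composites that are visibly identities). Your other shortcut --- ``all naturality squares invertible iff the mate squares define a lax transformation $G^{R}\to F^{R}$, by the double-mate identity'' --- is not a proof either: assembling the mates into a lax transformation requires coherence with composition in $\mathcal{Y}$, and even granted the candidate right adjoint, the unit, counit and triangle identities are extra data and conditions, not formal consequences. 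The necessity direction does go roughly as you indicate, but it too requires the explicit check (as in the paper) that the mate of the right adjoint's naturality square is inverse to the given lax square, using naturality of the unit and counit; as stated, your proposal leaves both directions resting on an appeal to \cref{thm:rvadj} that it cannot support.
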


\begin{remark}
  Although we have not found a specific mention of the 2-categorical
  analogue of this statement, it can be seen as a special case of
  Kelly's theory of \emph{doctrinal adjunctions}
  \cite{KellyDoctrine}*{Theorem 1.4},
  at least if the target 2-category is cocomplete. A related result is
  \cite{StreetLaxConstr}*{Theorem 1}, which shows that a lax
  transformation of functors from a category to $\Cat$ has a left
  adjoint colax transformation \IFF{} each component has a left adjoint.
\end{remark}

\begin{proof}
  Statement (i) for the \itcat{} $\mathcal{X}$ is equivalent to
  statement (ii) for $\mathcal{X}^{\twop}$ using the equivalences of
  \cref{cor:FUNlax2op} and \cref{rmk:adjsymm}. It thus
  suffices to prove (i).

  We first suppose that $\phi$ is a left adjoint, so that there exists
  a lax natural transformation $\rho \colon G \to F$ that is its right
  adjoint, a unit $\eta \colon \id \to \rho \phi$ and a counit
  $\epsilon \colon \phi \rho \to \id$. Since any functor of \itcats{}
  preserves adjunctions, we then have that the component $\phi_{Y}
  \colon F(Y) \to G(Y)$ is a left adjoint in $\mathcal{X}$ with right
  adjoint $\rho_{Y}$, with unit and counit given by the components of
  $\eta$ and $\epsilon$ at $Y$. For a 1-morphism $f \colon Y \to Y'$,
  the lax transformations $\phi$ and $\rho$ supply lax squares
  \[
    \begin{tikzcd}
      F(Y) \arrow{r}{\phi_{Y}} \arrow{d}[swap]{F(f)} & G(Y) \arrow{d}{G(f)}
      \arrow[dl,Rightarrow,inner sep=2pt, "\phi(f)"]\\
      F(Y') \arrow{r}[below]{\phi_{Y'}} & G(Y'),
    \end{tikzcd}
    \qquad
    \begin{tikzcd}
      G(Y) \arrow{r}{\rho_{Y}} \arrow{d}[swap]{G(f)} & F(Y) \arrow{d}{F(f)}
      \arrow[dl,Rightarrow,inner sep=2pt, "\rho(f)"]\\
      G(Y') \arrow{r}[below]{\rho_{Y'}} & F(Y').
    \end{tikzcd}
  \]
  and the 2-morphisms $\eta$ and $\epsilon$ supply diagrams
  that amount to commutative diagrams of 1-morphisms
  \[
    \begin{tikzcd}
      {} & F(f)\rho_{Y}\phi_{Y} \arrow{d}{\rho(f)\phi_{Y}} \\
      F(f) \arrow{ur}{F(f)\eta_{Y}} \arrow{dr}[below left]{\eta_{Y'}F(f)}  & \rho_{Y'}G(f)\phi_{Y}
      \arrow{d}{\rho_{Y'}\phi(f)} \\
      & \rho_{Y'}\phi_{Y'}F(f),
    \end{tikzcd}
    \qquad
    \begin{tikzcd}
      G(f)\phi_{Y}\rho_{Y} \arrow{d}[swap]{\phi(f)\rho_{Y}}
      \arrow{dr}{G(f)\epsilon_{Y}} \\
      \phi_{Y'}F(f)\rho_{Y} \arrow{d}[swap]{\phi_{Y'}\rho(f)} & G(f) \\
      \phi_{Y'} \rho_{Y'} F(f) \arrow{ur}[below right]{\epsilon_{Y'}F(f)}
    \end{tikzcd}
    \]
  The second lax square has a \emph{mate square}, which is an oplax square
  \[
    \begin{tikzcd}
      F(Y) \arrow{r}{\phi_{Y}} \arrow{d}[swap]{F(f)} & G(Y) \arrow{d}{G(f)}
      \arrow[dl,Leftarrow,inner sep=2pt, "\psi(f)"]\\
      F(Y') \arrow{r}[below]{\phi_{Y'}} & G(Y'),
    \end{tikzcd}
  \]
  where $\psi(f) \colon \phi_{Y'}F(f) \to G(f)\phi_{Y}$ is the composite
  \[ \phi_{Y'}F(f) \xto{\phi_{Y'}F(f)\eta_{Y}}
    \phi_{Y'}F(f)\rho_{Y}\phi_{Y} \xto{\rho(f)}
    \phi_{Y'}\rho_{Y'}G(f)\phi_{Y} \xto{\epsilon_{Y'}G(f)\phi_{Y}} G(f)\phi_{Y}.\]
  We claim that $\psi(f)$ is an inverse to the 2-morphism
  $\phi(f)$. Indeed, using the equivalences of 2-morphisms from the unit
  and counit we get commutative diagrams
     \[
   \begin{tikzcd}[column sep=huge]
     \phi_{Y'}F(f) \arrow[bend left=20]{rrr}{\phi_{Y'}\eta_{Y'}F(f)} \arrow[bend
     right=10]{drr}[below left]{\psi(f)} \arrow{r}{\phi_{Y'}F(f)\eta_{Y}} &
     \phi_{Y'}F(f)\rho_{Y}\phi_{Y} 
     \arrow{r}{\phi_{Y'}\rho(f)\phi_{Y}} & \phi_{Y'}\rho_{Y'}G(f)\phi_{Y} \arrow{r}{\phi_{Y'}\rho_{Y'}\phi(f)}  \arrow{d}{\epsilon_{Y'}G(f)\phi_{Y}} &
     \phi_{Y'}\rho_{Y'}\phi_{Y'}F(f)
     \arrow{d}{\epsilon_{Y'}\phi_{Y'}F(f)} \\
      && G(f)\phi_{Y} \arrow{r}[below]{\phi(f)} & \phi_{Y'}F(f),
    \end{tikzcd}
  \]
  \[
    \begin{tikzcd}[column sep=huge]
  G(f)\phi_{Y} \arrow{r}{\phi(f)} \arrow{d}{G(f)\phi_{Y}\eta_{Y}} & \phi_{Y'}F(f)
      \arrow{d}{\phi_{Y'}F(f)\eta_{Y}} \arrow[bend left=10]{drr}{\psi(f)} \\
      G(f)\phi_{Y}\rho_{Y}\phi_{Y} \arrow[bend
      right=20]{rrr}{G(f)\phi_{Y}\epsilon_{Y}} \arrow{r}{\phi(f)\rho_{Y}\phi_{Y}}
      & \phi_{Y'}F(f)\rho_{Y}\phi_{Y} \arrow{r}{\phi_{Y'}\rho(f)\phi_{Y}} &
      \phi_{Y'}\rho_{Y'}G(f)\phi_{Y} \arrow{r}{\epsilon_{Y'}G(f)\phi_{Y}} & G(f)\phi_{Y}.
    \end{tikzcd}
  \]
Together with the adjunction equivalences these diagrams show that
$\psi(f)$ is inverse to $\phi(f)$, and so $\phi(f)$ is
invertible. Thus any left adjoint morphism in $\FUN(\mathcal{Y},
\mathcal{X})_{\lax}$ does indeed lie in $\FUN(\mathcal{Y},
\mathcal{X})$.

We now need to prove the converse, \ie{} if we have a natural
transformation $\phi \colon F \to G$ in
$\FUN(\mathcal{Y}, \mathcal{X})$ such that $\phi_{Y}$ is a left adjoint
for all $Y$, then $\phi$ is a left adjoint in $\FUN(\mathcal{Y},
\mathcal{X})_{\lax}$. Since the space of left adjoints in
$\FUN(\mathcal{Y}, \mathcal{X})_{\lax}$ commutes with colimits in
$\mathcal{Y}$, it suffices to show this for $\mathcal{Y}$ being
$C_{0}, C_{1}$, and $C_{2}$ (with the case of $C_{0}$ being trivial). 
For the case of $C_{1}$ we have a commutative square
\[
  \begin{tikzcd}
    A \arrow{r}{l} \arrow{d}{a}& B \arrow{d}{b} \\
    A' \arrow{r}{l'} & B'
  \end{tikzcd}
\]
given by an equivalence $\iota \colon bl \isoto l'a$ and
where $l$ and $l'$ are left adjoints, and we must show that this has a
right adjoint in $\FUN(C_{1},\mathcal{X})_{\lax}$. Let $r \colon b \to
a$ and $r' \colon b' \to a'$ be right adjoints of $l$ and $l'$,
and let
\[ \eta \colon \id_{A} \to rl, \quad \epsilon \colon lr \to \id_{B},\]
\[ \eta' \colon \id_{A'} \to r'l', \quad \epsilon' \colon l'r' \to \id_{B'},\]
be unit and counit 2-morphisms. The right adjoint
will be given by the mate square
\[
  \begin{tikzcd}
    B \arrow{r}{r} \arrow{d}{b}& A \arrow{d}{a}
    \arrow[dl,Rightarrow,inner sep=2pt, "\psi"] \\
    A' \arrow{r}{r'} & B',
  \end{tikzcd}
\]
where the
2-morphism $\psi \colon ar \to r'b$ is the composite
\[ ar \xto{\eta'ar} r'l'ar \xto{r'\iota^{-1}r} r'blr \xto{r'b\epsilon}
  r'b.\]
Composing the original square with the mate we get lax squares
\[
  \begin{tikzcd}[column sep=2cm]
    B \arrow{r}{lr} \arrow{d}{b}& B \arrow{d}{b}
    \arrow[dl,Rightarrow,inner sep=2pt, "(l'\psi)(\iota r)"] \\
    B' \arrow[swap]{r}{l'r'} & B',
  \end{tikzcd}
  \qquad
  \begin{tikzcd}[column sep=2cm]
    A \arrow{r}{rl} \arrow{d}{a}& A \arrow{d}{a}
    \arrow[dl,Rightarrow,inner sep=2pt, "(r'\iota)(\psi l)"] \\
    A' \arrow[swap]{r}{r'l'} & A'.
  \end{tikzcd}
\]
Using \cref{lem:graytenscolim}, to define the unit and counit we must
define diagrams of shape $[2](1,0) \cup_{C_{2}} [1]([1]^{2})
\cup_{C_{2}} [2](0,1)$ in $\mathcal{X}$. These are
given by using the units and counits of the two adjunctions together
with commutative squares of 2-morphisms of the form
\[  \begin{tikzcd}
    a \arrow{r}{a \eta} \arrow[equals]{d} & arl
    \arrow{d}{(r'\iota)(\psi l)} \\
    a \arrow{r}{\eta'a} & r'l'a
  \end{tikzcd}
  \qquad
  \begin{tikzcd}
    blr \arrow{r}{b\epsilon} \arrow{d}[left]{(l'\psi)(\iota r)} & b \arrow[equals]{d} \\
    l'r'b \arrow{r}{\epsilon' b} & b,
  \end{tikzcd}
\]
which can be defined as the commutative diagrams
\[
  \begin{tikzcd}
    a \arrow{rrr}{a\eta} \arrow{dr}{\eta'a}\arrow[equals]{dddd} & & & arl \arrow{d} \\
    & r'l'a \arrow{rr} \arrow{dr}{r'\iota^{-1}}  \arrow[equals,bend
    right]{dddrr} & & r'l'arl \arrow{d}{r'\iota^{-1}rl} \\
    & & r'bl \arrow{r} \arrow[equals]{dr} \arrow{ddr}[below
    left]{r'\iota}  & r'blrl \arrow{d} \\
    & & & r'bl \arrow{d}{r'\iota} \\
    a \arrow{rrr}{\eta'a} & & & r'l'a,
  \end{tikzcd}
  \qquad
  \begin{tikzcd}
    blr \arrow{rrr}{b\epsilon} \arrow{d}{\iota r} \arrow{ddr}{\iota r}
    \arrow[bend left,equals]{dddrr} & & & b
    \arrow[equals]{dddd} \\
    l'ar \arrow{d} \arrow[equals]{dr} \\
    l'r'l'ar \arrow{d}{l'r'\iota^{-1}r} \arrow{r} & l'ar
    \arrow[swap]{dr}{\iota^{-1}r} \\
    l'r'blr \arrow{rr} \arrow{d} & & blr \arrow{dr} \\
    l'r'b \arrow{rrr}{\epsilon'b} & & & b,
  \end{tikzcd}
  \]
composed of naturality squares together with the adjunction
equivalences for $l$ and $l'$
and the invertibility equivalence of $\iota$.

To check the adjunction identities it is convenient to first give an
alternative description of these diagrams: Recall that the unit and
counit of the adjunction $l \dashv r$ can be described as mates:
\[
  \begin{tikzcd}
    A \arrow[equals]{r} \arrow[equals]{d} & A \arrow{d}{l} \\
    A \arrow{r}{l} & B
  \end{tikzcd}
  \rightsquigarrow
  \begin{tikzcd}
    A \arrow[equals]{r} \arrow{d}[swap]{l} & A \arrow[equals]{d} \ar[dl,Rightarrow,inner sep=2pt, "\eta"] \\
    B \arrow{r}{r} & A
  \end{tikzcd}
  \qquad  
  \begin{tikzcd}
    A \arrow{r}{l} \arrow{d}[swap]{l} & B \arrow[equals]{d} \\
    B \arrow[equals]{r} & B
  \end{tikzcd}
  \rightsquigarrow
  \begin{tikzcd}
    B \arrow{r}{r} \arrow[equals]{d} & A \arrow{d}{l} \ar[dl,Rightarrow,inner sep=2pt, "\epsilon"] \\
    B \arrow[equals]{r} & B
  \end{tikzcd}
  \]
The diagrams for the unit and counit above can then be obtained by
taking mates horizontally in the following cubes:
\[
  \begin{tikzcd}[row sep=small]
    A \arrow{dd}{a} \arrow[equals]{rr} \arrow[equals]{dr} & & A
    \arrow{dr}{l} \arrow{dd}[near start]{a} \\
    & A \arrow[crossing over]{rr}[near start]{l} & & B \arrow{dd}{b} \\
      A' \arrow[equals]{rr} \arrow[equals]{dr} & & A' \arrow{dr}{l'}
      \\
       & A' \arrow[leftarrow,crossing over]{uu}[near end]{a} \arrow{rr}{l'} & & B'
     \end{tikzcd}
     \rightsquigarrow
  \begin{tikzcd}[row sep=small]
    A \arrow{dd}{a} \arrow[equals]{rr} \arrow{dr}{l} & & A
    \arrow[equals]{dr} \arrow{dd}[near start]{a} \ar[dl,Rightarrow,inner sep=2pt]
     \\
    & B \arrow[crossing over]{rr}[near start]{r} & & A \arrow{dd}{a}
    \ar[ddll,Rightarrow,inner sep=2pt,bend left=12,crossing over] \\
      A' \arrow[equals]{rr} \arrow{dr}{l'} & & A' \arrow[equals]{dr} \ar[dl,Rightarrow,inner sep=2pt]
      \\
       & B' \arrow[leftarrow,crossing over]{uu}[near end]{b} \arrow{rr}{r'} & & A',
     \end{tikzcd}     
  \]
\[
  \begin{tikzcd}[row sep=small]
    A \arrow{dd}{a} \arrow{rr}{l} \arrow{dr}{l} & & B 
    \arrow[equals]{dr} \arrow{dd}[near start]{b} \\
    & B \arrow[crossing over,equals]{rr} & & B \arrow{dd}{b} \\
      A' \arrow{rr}[near end]{l'} \arrow{dr}{l'} & & B' \arrow[equals]{dr}
      \\
       & B' \arrow[leftarrow,crossing over]{uu}[near end]{b} \arrow[equals]{rr} & & B'
     \end{tikzcd}
     \rightsquigarrow
  \begin{tikzcd}[row sep=small]
    B \arrow{dd}{b} \arrow{rr}{r} \arrow[equals]{dr} & & A
    \arrow{dr}{l} \arrow{dd}[near start]{a} \ar[dl,Rightarrow,inner
    sep=2pt] \ar[ddll,Rightarrow,inner sep=2pt,bend left=12]
     \\
    & B \arrow[crossing over,equals]{rr} & & B \arrow{dd}{b}
     \\
      B' \arrow{rr}[near end]{r'} \arrow[equals]{dr} & & A' \arrow{dr}{l'} \ar[dl,Rightarrow,inner sep=2pt]
      \\
       & B' \arrow[leftarrow,crossing over]{uu}[near end,swap]{b} \arrow[equals]{rr} & & B'.
     \end{tikzcd}     
   \]
With this description checking the adjunction identities amounts to showing that the following
composite cubes are horizontal and vertical identities, respectively (here we have omitted the
2-morphisms to make the diagram legible):
\[
  \begin{tikzcd}[row sep=tiny,column sep=tiny]
    A \arrow{ddd}{a} \arrow[equals]{rrrrr} \arrow{drr}{l} & & & & & A
    \arrow[equals]{drr} \arrow{ddd}{a} 
     \\
    & &  B \arrow[crossing over]{rrrrr}[near start]{r}
    \arrow[equals]{drr} & & & & & A \arrow{ddd}{a} \arrow{drr}{l}
    \\
    & & & & B \arrow[crossing over,equals]{rrrrr}   & &
    & & & B \arrow{ddd}{b} \\
    A' \arrow[equals]{rrrrr} \arrow{drr}{l'} & & & & & A' \arrow[equals]{drr}       \\
       & & B' \arrow[leftarrow,crossing over]{uuu}[near end]{b}
       \arrow{rrrrr}{r'} \arrow[equals]{drr} & & & & & A' \arrow{drr}{l'}\\
       & & & & B'\arrow[leftarrow,crossing over]{uuu}{b} \arrow[equals]{rrrrr} & & & & & B',
     \end{tikzcd}
     \qquad
  \begin{tikzcd}[row sep=small,column sep=small]
    B \arrow{dd}{b} \arrow{rr}{r} \arrow[equals]{dr} & & A \arrow[equals]{rr}
    \arrow{dr}{l} \arrow{dd}[near start]{a} & & A \arrow[equals]{dr}
    \arrow{dd}[near start]{a}
     \\
    & B \arrow[crossing over,equals]{rr} & & B 
    \arrow[crossing over]{rr}[near start]{r} & & A \arrow{dd}{a}
     \\
      B' \arrow{rr}[near end]{r'} \arrow[equals]{dr} & & A' \arrow[equals]{rr}
      \arrow{dr}{l'}  & & A' \arrow[equals]{dr} 
      \\
       & B' \arrow[leftarrow,crossing over]{uu}[near end,swap]{b}
       \arrow[equals]{rr} & & B' \arrow[leftarrow,crossing
       over]{uu}[near end,swap]{b} \arrow{rr}{r'} & & A'.
     \end{tikzcd}     
   \]
Since taking mates is compatible with horizontal and vertical
composition of squares, these composite cubes are obtained by taking
horizontal mates in the composite cubes
\[
  \begin{tikzcd}[row sep=tiny,column sep=tiny]
    A \arrow{ddd}{a} \arrow[equals]{rrrrr} \arrow[equals]{drr} & & & & & A
    \arrow{drr}{l} \arrow{ddd}{a} 
     \\
    & &  A \arrow[crossing over]{rrrrr}[near start]{l}
    \arrow{drr}{l} & & & & & B \arrow{ddd}{b} \arrow[equals]{drr}
    \\
    & & & & B \arrow[crossing over,equals]{rrrrr}   & &
    & & & B \arrow{ddd}{b} \\
    A' \arrow[equals]{rrrrr} \arrow[equals]{drr} & & & & & A' \arrow{drr}{l'}       \\
       & & A' \arrow[leftarrow,crossing over]{uuu}{a}
       \arrow{rrrrr}{l'} \arrow{drr}{l'} & & & & & B' \arrow[equals]{drr}\\
       & & & & B'\arrow[leftarrow,crossing over]{uuu}{b} \arrow[equals]{rrrrr} & & & & & B',
     \end{tikzcd}
     \qquad
  \begin{tikzcd}[row sep=small,column sep=small]
    A \arrow{dd}{a} \arrow[equals]{rr} \arrow[equals]{dr} & & A \arrow{rr}{l}
    \arrow{dr}{l} \arrow{dd}[near start]{a} & & B \arrow[equals]{dr}
    \arrow{dd}[near start]{b}
     \\
    & A \arrow[crossing over]{rr}[near start]{l} & & B 
    \arrow[equals,crossing over]{rr} & & B \arrow{dd}{b}
     \\
      A' \arrow[equals]{rr} \arrow[equals]{dr} & & A' \arrow{rr}[near start]{l'}
      \arrow{dr}{l'}  & & B' \arrow[equals]{dr} 
      \\
       & A' \arrow[leftarrow,crossing over]{uu}[near end,swap]{a}
       \arrow{rr}{l'} & & B' \arrow[leftarrow,crossing
       over]{uu}[near end,swap]{b} \arrow[equals]{rr} & & B'.
     \end{tikzcd}     
   \]
Since these composites are clearly identities, we have proved the
adjunction identities for $C_{1}$.

We now discuss the case $C_{2}$. Here the putative left adjoint is a
commutative cylindrical diagram
\[
\begin{tikzcd}
  A \ar[rr, "l"] \ar[d, bend right=45, "a"{left} ""{right,name=A1}] && B \ar[d,
  bend right=45, "b"{left}, ""{right,name=A2}] \ar[d, bend left=45,
  "b'"{right}, ""{left,name=B2}] \\
  A' \ar[rr, "l'"] 
  \ar[u, leftarrow, crossing over, bend right=45, "a'"{right}, ""{left,name=B1}]
  && B'
  \arrow[from=A1,to=B1,Rightarrow, "\alpha"]
  \arrow[from=A2,to=B2,Rightarrow, "\beta"]
\end{tikzcd}
\]
where $l$ and $l'$ are left adjoints. The commutativity data amounts
to equivalences
\[ bl \isoto l'a, \qquad b'l \isoto l'a'\]
together with a commutative square of morphisms $A \to B'$
\[
  \begin{tikzcd}
    bl \arrow{d}{\sim} \arrow{r}{\beta l} & b'l \arrow{d}{\sim}
    \\
    l'a \arrow{r}{l'\alpha} & l'a'.
  \end{tikzcd}
\]
The right adjoint is then given by a diagram of shape
\[
\begin{tikzcd}
  B \ar[rr,"r"] \ar[d, bend right=45, ""{right,name=A1}] && A \ar[d,
  bend right=45, ""{right,name=A2}] \ar[d, bend left=45, ""{left,name=B2}] \\
  B' \ar[rr,"r'"{below}]
  \ar[rru, bend right=30, 
  start anchor=north east, end anchor=south, crossing over, Leftarrow]
  \ar[rru, bend left=30, 
   start anchor=north, end anchor=south west,Leftarrow]
  \ar[u, leftarrow, crossing over, bend right=45, ""{left,name=B1}]
  && A'
  \arrow[from=A1,to=B1,Rightarrow]
  \arrow[from=A2,to=B2,Rightarrow]
\end{tikzcd}
\]
where $r$ and $r'$ are the right adjoints of $l$ and $l'$, and the
front and back lax squares are defined as above in the case
$C_{1}$. The additional coherence data required amounts to a
commutative square of morphisms, which we define as the composite of
the diagram
\[
  \begin{tikzcd}
    ar \arrow{r}{\alpha r} \arrow{d}{\eta'ar} & a'r \arrow{d}{\eta'
      a'r} \\
    r'l'ar \arrow{r}{r'l' \alpha r} \arrow{d}{\sim} & r'l'a' r
    \arrow{d}{\sim} \\
    r'blr \arrow{r}{r' \beta lr} \arrow{d}{r'b\epsilon} & r'b'lr
    \arrow{d}{r'b' \epsilon} \\
    r'b \arrow{r}{r'\beta} & r'b'
  \end{tikzcd}
\]
using the specified square and two naturality squares.

The decomposition in \cref{lem:graytenscolim} implies that the non-obvious part of defining
the unit and counit (using the unit and counit for $l \dashv r$ and
$l' \dashv r'$) is specifying two commutative cubes, which we can
define using naturality data for the (co)units defined in the
$C_{1}$-case; this can also be thought of as taking mates in one
direction in a 4-dimensional cube. Naturality of taking mates then gives the adjunction
identities by a 4-dimensional version of the argument above.
\end{proof}

\begin{cor}\label{cor:ADJlaxeq}
  Let $\FUN(C_{1}, \mathcal{X})_{\ladj}$ and $\FUN(C_{1},
  \mathcal{X})_{\radj}$ denote the full sub-\itcats{} of $\FUN(C_{1},
  \mathcal{X})$ containing only the left and right adjoint morphisms
  in $\mathcal{X}$, respectively. There are equivalences
  \[ \ADJ(\mathcal{X})_{\lax} \isoto \FUN(C_{1},
    \mathcal{X})_{\radj},\]
  \[ \ADJ(\mathcal{X})_{\colax} \isoto \FUN(C_{1},
    \mathcal{X})_{\ladj},\]
  given by composition with the morphisms $C_{1} \to \fadj$ 
  picking out the right and left adjoint 1-morphisms, respectively.
\end{cor}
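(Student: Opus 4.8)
The plan is to deduce both equivalences from \cref{thm:rvadj} and \cref{propn:laxfunadj} by a Yoneda argument, testing $\ADJ(\mathcal{X})_{\lax}$ against an arbitrary \itcat{} $\mathcal{Z}$.

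First I would reduce the colax statement to the lax one. By \cref{cor:FUNlax2op} and the equivalence $\fadj^{\twop}\simeq\fadj$ of \cref{rmk:adjsymm} there is an equivalence $\ADJ(\mathcal{X})_{\colax}\simeq\ADJ(\mathcal{X}^{\twop})_{\lax}^{\twop}$; since $C_{1}^{\twop}=C_{1}$ has no non-trivial $2$-morphisms one has $\FUN(C_{1},\mathcal{X}^{\twop})\simeq\FUN(C_{1},\mathcal{X})^{\twop}$, and reversing $2$-morphisms interchanges left and right adjoints, so this equivalence carries the full sub-\itcat{} on right-adjoint morphisms to $\FUN(C_{1},\mathcal{X})_{\ladj}^{\twop}$. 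Granting the lax equivalence for $\mathcal{X}^{\twop}$ then gives the colax equivalence for $\mathcal{X}$, with the map $C_{1}\to\fadj$ picking out $r$ corresponding under the symmetry to the one picking out $l$. So it suffices to prove the lax statement for every \itcat{} $\mathcal{X}$.

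For the lax case, given an \itcat{} $\mathcal{Z}$, tensor-hom adjunction and \cref{defn:laxcolaxgray} identify $\Map(\mathcal{Z},\ADJ(\mathcal{X})_{\lax})$ with $\Map(\fadj\otimes^{\lax}\mathcal{Z},\mathcal{X})\simeq\Map(\mathcal{Z}\otimes^{\colax}\fadj,\mathcal{X})\simeq\Map(\fadj,\FUN(\mathcal{Z},\mathcal{X})_{\colax})$, the space of adjunctions in the \itcat{} $\FUN(\mathcal{Z},\mathcal{X})_{\colax}$. I would then apply \cref{thm:rvadj} to this \itcat{} (identifying $\Delta^{1}$ with $C_{1}$): evaluation at $r$ identifies this space with the subspace $\Map(C_{1},\FUN(\mathcal{Z},\mathcal{X})_{\colax})^{\radj}$ of right-adjoint $1$-morphisms, while a further use of tensor-hom rewrites $\Map(C_{1},\FUN(\mathcal{Z},\mathcal{X})_{\colax})\simeq\Map(\mathcal{Z},\FUN(C_{1},\mathcal{X})_{\lax})$. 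By \cref{propn:laxfunadj}(ii) the right-adjoint $1$-morphisms are exactly the colax transformations whose naturality squares all commute and whose components are all right adjoints; the first condition says (by the corollary preceding \cref{lem:graytenscolim}) that the morphism lies in the sub-\itcat{} $\FUN(\mathcal{Z},\mathcal{X})\subseteq\FUN(\mathcal{Z},\mathcal{X})_{\colax}$, equivalently that the associated functor factors through $\FUN(C_{1},\mathcal{X})\hookrightarrow\FUN(C_{1},\mathcal{X})_{\lax}$, and the second then says this functor $\mathcal{Z}\to\FUN(C_{1},\mathcal{X})$ carries every object to a right-adjoint morphism of $\mathcal{X}$, \ie{} factors through the full sub-\itcat{} $\FUN(C_{1},\mathcal{X})_{\radj}$. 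This gives $\Map(\mathcal{Z},\ADJ(\mathcal{X})_{\lax})\simeq\Map(\mathcal{Z},\FUN(C_{1},\mathcal{X})_{\radj})$ naturally in $\mathcal{Z}$, and Yoneda produces the equivalence; since the only non-identity step is evaluation at $r$, the equivalence is restriction along $r\colon C_{1}\to\fadj$, and taking $\mathcal{Z}=C_{0}$ recovers the Riehl--Verity equivalence on objects.

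The hard part will be the middle of the lax argument: checking that, under the tensor-hom equivalences, the two conditions of \cref{propn:laxfunadj}(ii) transport exactly to the statement that the associated functor $\mathcal{Z}\to\FUN(C_{1},\mathcal{X})_{\lax}$ factors through the full sub-\itcat{} $\FUN(C_{1},\mathcal{X})_{\radj}$ of $\FUN(C_{1},\mathcal{X})$. This relies on the compatibility of the inclusions $\FUN(-,-)\hookrightarrow\FUN(-,-)_{\pcolax}$ with these adjunctions and on the identification of $\FUN(\mathcal{Z},\mathcal{X})$ with the ``naturality squares commute'' sub-\itcat{} of $\FUN(\mathcal{Z},\mathcal{X})_{\colax}$; the remaining higher coherences only concern invertibility of $2$-morphisms and are automatic. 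One also has to record that all the displayed equivalences are natural in $\mathcal{Z}$ --- in particular the Riehl--Verity step, which is evaluation at a fixed morphism --- so that the concluding step is a bona fide instance of Yoneda.
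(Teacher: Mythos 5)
Your proposal is correct and is essentially the paper's own argument: the same chain of tensor--hom equivalences identifying $\Map(\mathcal{Z},\ADJ(\mathcal{X})_{\lax})$ with $\Map(\fadj,\FUN(\mathcal{Z},\mathcal{X})_{\colax})$, followed by \cref{thm:rvadj} applied to $\FUN(\mathcal{Z},\mathcal{X})_{\colax}$ and \cref{propn:laxfunadj} to recognize the right-adjoint $1$-morphisms as levelwise-right-adjoint natural transformations, i.e.\ $\Map(\mathcal{Z},\FUN(C_{1},\mathcal{X})_{\radj})$, concluded by Yoneda. The only (harmless) difference is that you deduce the colax statement from the lax one via $2$-op duality, where the paper simply notes that the second proof is similar.
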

\begin{proof}
  We prove the first equivalence, the proof of the second is
  similar. For any \itcat{} $\mathcal{Z}$ we have natural equivalences
  \[ \Map(\mathcal{Z}, \ADJ(\mathcal{X})_{\lax}) \simeq \Map(\fadj
    \otimes^{\lax} \mathcal{Z}, \mathcal{X}) \simeq \Map(\fadj,
    \FUN(\mathcal{Z}, \mathcal{X})_{\colax}).\]
  By \cref{thm:rvadj} evaluation at the right adjoint gives an
  equivalence
\[ \Map(\fadj,
    \FUN(\mathcal{Z}, \mathcal{X})_{\colax}) \isoto
  \Map(C_{1}, \FUN(\mathcal{Z},
  \mathcal{X})_{\colax})^{\radj},\] which by
  \cref{propn:laxfunadj} is equivalent to the space of
  natural transformations, \ie{} 1-morphisms in $\FUN(\mathcal{Z},
  \mathcal{X})$, that are levelwise right adjoints. In other words,
  this is precisely $\Map(\mathcal{Z}, \FUN(C_{1}, \mathcal{X})_{\radj})$.
\end{proof}

As an immediate consequence, we get the following naturality statement
for the process of taking mates of commutative squares:
\begin{cor}\label{cor:mateC1}
  There are natural functors of \itcats{}
  \[ \FUN(C_{1}, \mathcal{X})_{\radj} \isofrom
    \ADJ(\mathcal{X})_{\lax} \to \FUN(C_{1},
    \mathcal{X})_{\ladj,\lax},\]
  \[ \FUN(C_{1}, \mathcal{X})_{\ladj} \isofrom
    \ADJ(\mathcal{X})_{\colax} \to \FUN(C_{1},
    \mathcal{X})_{\radj,\colax}\] given on objects by passing to the
  other adjoint and on morphisms by taking mates. \qed
\end{cor}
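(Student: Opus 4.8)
The plan is to read off the two leftward equivalences directly from \cref{cor:ADJlaxeq} and to construct the rightward functors by composing with the \emph{other} of the two cell inclusions $C_{1}\to\fadj$, then to recognize the resulting operation as taking mates.

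For the first line I would take the leftward equivalence $\FUN(C_{1},\mathcal{X})_{\radj}\isofrom\ADJ(\mathcal{X})_{\lax}$ to be exactly \cref{cor:ADJlaxeq}, which is evaluation at the right adjoint $r$. For the rightward functor I would instead evaluate at the left adjoint $l$: functoriality of $\FUN(\blank,\mathcal{X})_{\lax}$ in the first variable, \ie{} restriction along the induced map of Gray tensor products $C_{1}\otimes^{\lax}\blank\to\fadj\otimes^{\lax}\blank$, produces a functor $\ADJ(\mathcal{X})_{\lax}=\FUN(\fadj,\mathcal{X})_{\lax}\to\FUN(C_{1},\mathcal{X})_{\lax}$. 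Since every functor $\fadj\to\mathcal{X}$ is an adjunction and functors of \itcats{} preserve adjunctions, the value at $l$ is always a left adjoint, so this functor lands in the full sub-\itcat{} $\FUN(C_{1},\mathcal{X})_{\ladj,\lax}$ of $\FUN(C_{1},\mathcal{X})_{\lax}$ spanned by the left adjoint objects. For the second line I would run the same argument with colax in place of lax and the roles of $l$ and $r$ exchanged, or simply deduce it from the first by applying $(\blank)^{\twop}$ through \cref{cor:FUNlax2op} and \cref{rmk:adjsymm}.

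It then remains to verify the two descriptions. On objects the composite $\FUN(C_{1},\mathcal{X})_{\radj}\isofrom\ADJ(\mathcal{X})_{\lax}\to\FUN(C_{1},\mathcal{X})_{\ladj,\lax}$ sends a right adjoint $g$ to the adjunction it underlies (inverting \cref{cor:ADJlaxeq}) and then to that adjunction's value at $l$, which is a left adjoint of $g$; this is ``pass to the other adjoint''. On morphisms, a $1$-morphism of $\FUN(C_{1},\mathcal{X})_{\radj}$ is a commuting square of right adjoints, corresponding under \cref{cor:ADJlaxeq} to a lax morphism of adjunctions, and what I need is that restricting this lax morphism along $r$ gives back the square while restricting it along $l$ gives the mate lax square in the sense of \cref{rmk:mate}.

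I expect the only real work to be this last bookkeeping. No new computation should be required, since the mate formula has already been written out explicitly in the proof of \cref{propn:laxfunadj}(i), where the right adjoint of a left-adjoint lax transformation was produced as the lax square of mates obtained by whiskering with units and counits; the restrictions of a lax morphism of adjunctions along $l$ and along $r$ are related by precisely that whiskering. So the content is entirely contained in \cref{cor:ADJlaxeq} and \cref{propn:laxfunadj}, and the only care needed is to track the directions of the $2$-cells so that the identification is with the mate and not its formal inverse.
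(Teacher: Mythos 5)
Your proposal is correct and matches the paper's (implicit) argument: the paper states this as an immediate consequence of \cref{cor:ADJlaxeq}, with the leftward equivalences given by evaluation at $r$ (resp.\ $l$) and the rightward functors by restriction along the other inclusion $C_{1}\to\fadj$, landing in the left (resp.\ right) adjoints because functors of \itcats{} preserve adjunctions, and with the identification on objects and morphisms supplied by \cref{propn:laxfunadj} and \cref{rmk:mate}. Your plan is exactly this, including the care about 2-cell directions, so nothing further is needed.
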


\begin{remark}
  The procedure of taking mates for functors of \icats{} has
  previously been considered in \cite{LiuZheng} and
  \cite{BarwickMackey} in the case where the mate is invertible, and
  more generally in the book \cite{GaitsgoryRozenblyum}.
\end{remark}

\begin{remark}\label{rmk:adjcomp}
  Let $\fadj^{n} := \fadj \amalg_{[0]} \cdots \amalg_{[0]} \fadj$
  denote the \icat{} of $n$ composable adjunctions. Then
  \cref{thm:rvadj} furnishes an equivalence
  \[ \Map(\fadj^{n}, \mathcal{X}) \simeq \Map(\Delta^{n},
    \mathcal{X})^{\radj},\]
  where the right-hand side denotes the subspace of $\Map(\Delta^{n},
  \mathcal{X})$ of composable sequences of $n$ right adjoints. Since
  right adjoints are closed under composition, we have a simplicial
  space $\Map(\Delta^{\bullet}, \mathcal{X})^{\radj}$ natural in the
  \itcat{} $\mathcal{X}$, which by the Yoneda lemma implies that the
  representing objects $\fadj^{\bullet}$ also form a simplicial
  object. We can then upgrade \cref{cor:ADJlaxeq} to a
  natural equivalence
  \[ \FUN(\fadj^{\bullet}, \mathcal{X})_{\lax} \isoto
    \FUN(\Delta^{\bullet}, \mathcal{X})_{\radj}.\]
  This implies that taking mates is compatible with composition, since
  we get a composite functor
  \[ \FUN(\Delta^{\bullet}, \mathcal{X})_{\radj} \isofrom
    \FUN(\fadj^{\bullet}, \mathcal{X})_{\lax} \to
    \FUN((\Delta^{\bullet})^{\op}, \mathcal{X})_{\ladj,\lax},\]
  where we write $(\Delta^{\bullet})^{\op}$ to emphasize that the
  order of composition is reversed.
\end{remark}

\begin{remark}\label{rmk:Zagadj}
In his thesis~\cite{Zaganidis}, D.~Zaganidis considers for $I \in
\bTt$ the universal
2-category $\fadj(I)_{\lax}$ with an $I$-shaped diagram
of lax morphisms of adjunctions and shows that this also
satisfies the universal property
\[ \Map_{\CatIT}(\fadj(I)_{\lax}, \mathcal{X}) \simeq
  \Map_{\CatIT}(I, \FUN(C_{1}, \mathcal{X})_{\radj}).\]
It follows that $\fadj(I)_{\lax}$ is equivalent to the
Gray tensor product $\fadj \otimes^{\lax} I$. In fact, it is not
hard to see from the explicit definition of $\fadj(I)_{\lax}$ that
this is the classical Gray tensor product of $\fadj$ and $I$, and so
agrees with $\fadj \otimes^{\lax} I$ under
\cref{ass:gray}.  
\end{remark}

\section{Lax Morphisms of Monads}\label{sec:mnd}
We now turn to monads and (co)lax morphisms between them, which again
arise as (co)lax natural transformations.
\begin{notation}
  Let $\fmnd$ denote the full subcategory of $\mathfrak{adj}$ on the
  object $-$; this is the ``walking monad'' 2-category.
\end{notation}

\begin{defn}
  Let $\mathcal{X}$ be an \itcat{}. A \emph{monad} in
  $\mathcal{X}$ is a functor of \itcats{} $\fmnd \to \mathcal{X}$, and
  a \emph{(co)lax morphism} of monads is a (co)lax natural
  transformation between monads, \ie{} a functor
  \[ \fmnd \otimes^{\pcolax} C_{1} \to \mathcal{X}.\] We write
  $\MND(\mathcal{X})_{\pcolax} := \FUN(\fmnd, \mathcal{X})_{\pcolax}$
  for the \itcat{} of monads in $\mathcal{X}$ and (co)lax
  morphisms between them, and $\Mnd(\mathcal{X})_{\pcolax}$ for the
  underlying \icat{}.
\end{defn}

\begin{remark}
  For ordinary 2-categories, the notions of lax and colax morphisms of monads
  were first introduced by Street~\cite{StreetFormalMonad}, who called
  them \emph{monad functors} and \emph{monad opfunctors}.
\end{remark}

We will use results of Zaganidis to relate $\MND(\mathcal{X})_{\lax}$ to
\itcats{} of monadic adjunctions and monadic right adjoints; as
Zaganidis works in the framework of categories strictly enriched in
quasicategories developed by Riehl and Verity, this requires
the following additional assumption on the \itcat{} $\mathcal{X}$:
\begin{defn}
  We say an \itcat{} $\mathcal{X}$ is \emph{cosmifiable} if it can be
  modelled by a category strictly enriched in quasicategories that is
  an \emph{$\infty$-cosmos} in the sense of Riehl and Verity
  \cite{RiehlVerityBook}*{Definition 1.2.1}.
\end{defn}

\begin{exs}\label{ex:cosmex}\ 
  \begin{enumerate}[(i)]
  \item The \itcat{} $\CATI$ is cosmifiable (it can be modelled by the
    simplicial category of quasicategories).
  \item If $\mathcal{X}$ is cosmifiable, then so is
    $\mathcal{X}^{\twop}$ by \cite{RiehlVerityBook}*{Definition
      1.2.25}. In particular, $\CATI^{\twop}$ is cosmifiable.
  \end{enumerate}
\end{exs}

In his thesis~\cite{Zaganidis}, Zaganidis considers the full
sub-2-category $\fmnd(I)_{\lax}$ of $\fadj(I)_{\lax}$ (see~\cref{rmk:Zagadj})
consisting of an $I$-shaped diagram of lax morphisms of monads, which can be
identified with the classical Gray tensor product of $\fmnd$ and
$I$. Under
\cref{ass:gray}, this means that $\fmnd(I)_{\lax}$
corresponds to $\fmnd \otimes^{\lax} I$. We can then
state the main result of \cite{Zaganidis} as follows:
\begin{thm}[Zaganidis]\label{thm:Zaganidis}
  Let $\mathcal{X}$ be a cosmifiable \itcat{}. The restrictions
  \[ \Fun(\fadj \otimes^{\lax} I, \mathcal{X}) \rightarrow
    \Fun(\fmnd \otimes^{\lax} I, \mathcal{X})\]
  for $I \in \bTt$
  have fully faithful right adjoints, with image those functors $\fadj
  \otimes^{\lax} I \to \mathcal{X}$ where the underlying adjunction at each
  object of $I$ is monadic.
\end{thm}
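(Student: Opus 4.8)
The plan is to obtain this as a reformulation of the main theorem of \cite{Zaganidis}. Since $\mathcal{X}$ is cosmifiable, fix an $\infty$-cosmos $\mathcal{K}$ in the sense of Riehl--Verity modelling it. Zaganidis carries out his constructions with the explicit strict $2$-categories $\fadj(I)_{\lax}$ and $\fmnd(I)_{\lax}$ and with enriched functors from these into $\mathcal{K}$, so the first task is to match his set-up with ours. By \cref{rmk:Zagadj} (using \cref{ass:gray}) there are equivalences $\fadj(I)_{\lax} \simeq \fadj \otimes^{\lax} I$ and $\fmnd(I)_{\lax} \simeq \fmnd \otimes^{\lax} I$ under which the inclusion $\fmnd(I)_{\lax} \hookrightarrow \fadj(I)_{\lax}$ corresponds to the evident map $\fmnd \otimes^{\lax} I \to \fadj \otimes^{\lax} I$; and --- this is where cosmifiability enters --- the functor quasicategories from $\fadj(I)_{\lax}$ and $\fmnd(I)_{\lax}$ into $\mathcal{K}$ present the $\infty$-categories $\Fun(\fadj \otimes^{\lax} I, \mathcal{X})$ and $\Fun(\fmnd \otimes^{\lax} I, \mathcal{X})$ of the statement, with the restriction functors matching up.

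Granting this translation, the content imported from \cite{Zaganidis} is: for each $I$ the restriction functor admits a right adjoint $R_{I}$, built objectwise over $I$ from the resolution of a monad into its monadic (Eilenberg--Moore) adjunction inside the $\infty$-cosmos $\mathcal{K}$, following Riehl--Verity; the substance of Zaganidis's work is that this resolution is natural with respect to \emph{lax} morphisms, and so assembles into a functor on $I$-shaped diagrams that is right adjoint to restriction. It then remains to see the two assertions about $R_{I}$ --- full faithfulness, and that its essential image is the diagrams that are monadic at each object of $I$ --- both of which are objectwise consequences of the definition of monadicity, since equivalences between diagrams valued in $\mathcal{X}$ are detected objectwise.

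In more detail: full faithfulness of $R_{I}$ amounts to the counit $\mathrm{restr}\circ R_{I}\to\mathrm{id}$ being an equivalence, and at each object of $I$ this counit is exactly the comparison witnessing that the Eilenberg--Moore adjunction of the monad induced by a monadic adjunction recovers the original adjunction, \ie{} it is an equivalence by the very definition of monadicity. For the essential image, $R_{I}$ lands among pointwise-monadic adjunction diagrams because Eilenberg--Moore adjunctions are monadic; conversely, a pointwise-monadic adjunction diagram is objectwise equivalent to the Eilenberg--Moore resolution of its underlying monad diagram, and since these comparison equivalences are natural in the lax morphisms they assemble into an equivalence of diagrams, so the original diagram lies in the image of $R_{I}$.

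The main obstacle is the first step, namely checking that Zaganidis's strictly-enriched model genuinely computes the model-independent functor $\infty$-categories appearing in the statement. This has two ingredients: the identification $\fadj(I)_{\lax} \simeq \fadj \otimes^{\lax} I$ of \cref{rmk:Zagadj}, which rests on recognising $\fadj(I)_{\lax}$ as the classical Gray tensor product and then invoking \cref{ass:gray}; and a comparison between enriched functor categories out of strict $2$-categories into an $\infty$-cosmos and the corresponding mapping $\infty$-categories in $\CatIT$, which is the general price of transferring Riehl--Verity-style results into the present framework. Once these compatibilities are granted, the theorem is Zaganidis's.
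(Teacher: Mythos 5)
Your proposal matches the paper's treatment: the paper offers no independent proof but states the result as a rephrasing of the main theorem of Zaganidis's thesis, using exactly the translation you describe --- the identification $\fadj(I)_{\lax}\simeq \fadj\otimes^{\lax} I$ and $\fmnd(I)_{\lax}\simeq\fmnd\otimes^{\lax} I$ from \cref{rmk:Zagadj} under \cref{ass:gray}, with cosmifiability invoked precisely so that the strictly enriched (quasicategorically enriched $\infty$-cosmos) model computes the relevant functor $\infty$-categories. Your additional sketch of the objectwise content (Eilenberg--Moore resolutions, counit equivalences, pointwise monadicity) is a reasonable gloss on what is imported from \cite{Zaganidis} and \cite{RiehlVerityAdj}, consistent with the remark following the theorem in the paper.
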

\begin{remark}
  In the case $I = C_{0}$, the right adjoint
  \[\Fun(\fmnd, \mathcal{X}) \to \Fun(\fadj, \mathcal{X})\]
  is due to Riehl and Verity. Morally, this adjoint is given by an
  $(\infty,2)$-categorical right Kan extension along the inclusion
  $\fmnd \hookrightarrow \fadj$ and should thus exist for any target
  $(\infty,2)$-category where certain weighted limits exist. (Indeed,
  it seems plausible that the property of being ``cosmifiable''
  precisely amounts to the existence of a class of such limits.) The ideal proof
  of \cref{thm:Zaganidis} would then simply check that these weighted
  limits exist in $\FUN(I, \mathcal{X})_{\colax}$. However, the theory of
  weighted limits in $(\infty,2)$-categories has not yet been set
  up. Riehl and Verity circumvent this by modelling
  $(\infty,2)$-categories as categories enriched in quasicategories,
  and showing that certain ordinary weighted limits are homotopically
  meaningful, and Zaganidis applies the same technique to
  $\fmnd(I)_{\lax} \to \fadj(I)_{\lax}$.
\end{remark}

This has the following consequence:
\begin{cor}\label{cor:mndadj}
  Let $\mathcal{X}$ be a cosmifiable \itcat{}. The forgetful functor \[\ADJ(\mathcal{X})_{\lax} \to \MND(\mathcal{X})_{\lax}\]
  has a fully faithful right adjoint, with image the full sub-\itcat{}
  $\ADJ(\mathcal{X})_{\lax,\txt{mnd}}$ of monadic adjunctions. In
  particular, there are equivalences of \itcats{}
  \[ \MND(\mathcal{X})_{\lax} \isoto \ADJ(\mathcal{X})_{\lax,\txt{mnd}} \isoto
    \FUN(\Delta^{1}, \mathcal{X})_{\txt{mndradj}},\]
  where the latter denotes the full sub-\itcat{} of $\FUN(\Delta^{1},
  \mathcal{X})$ whose objects are the monadic right adjoints.
\end{cor}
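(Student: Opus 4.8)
The plan is to obtain the corollary by feeding all of $\bTt$ into Zaganidis's \cref{thm:Zaganidis} and then splicing in \cref{cor:ADJlaxeq}. First, the forgetful functor $U \colon \ADJ(\mathcal{X})_{\lax} \to \MND(\mathcal{X})_{\lax}$ is $\FUN(\iota, \mathcal{X})_{\lax}$ for the inclusion $\iota \colon \fmnd \hookrightarrow \fadj$, and for $I \in \bTt$ the adjunction relating $\otimes^{\lax}$ to $\FUN(\blank,\blank)_{\lax}$ gives natural equivalences
\[ \Map_{\CatIT}(I, \ADJ(\mathcal{X})_{\lax}) \simeq \Map_{\CatIT}(\fadj \otimes^{\lax} I, \mathcal{X}), \]
\[ \Map_{\CatIT}(I, \MND(\mathcal{X})_{\lax}) \simeq \Map_{\CatIT}(\fmnd \otimes^{\lax} I, \mathcal{X}), \]
under which the map induced by $U$ becomes restriction along $\fmnd \otimes^{\lax} I \hookrightarrow \fadj \otimes^{\lax} I$. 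By \cref{thm:Zaganidis} this restriction admits a fully faithful right adjoint $R_{I}$ whose essential image consists of the diagrams $\fadj \otimes^{\lax} I \to \mathcal{X}$ monadic at every object of $I$ --- equivalently, of the $I$-shaped diagrams factoring through the full sub-\itcat{} $\ADJ(\mathcal{X})_{\lax,\txt{mnd}}$ spanned by the monadic adjunctions.

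The next step is to assemble the $R_{I}$. Since right adjoints are unique up to a contractible space of choices and the restriction functors above are manifestly natural in $I \in \bTt$, the functors $R_{I}$ --- together with the units and counits of the adjunctions they belong to --- are automatically natural in $I$. Passing to maximal sub-\igpds{}, the $R_{I}$ thus define a map of $\bTt^{\op}$-shaped diagrams of spaces, \ie{} a functor $R \colon \MND(\mathcal{X})_{\lax} \to \ADJ(\mathcal{X})_{\lax}$; moreover $R$ is fully faithful with essential image $\ADJ(\mathcal{X})_{\lax,\txt{mnd}}$, because for $I = C_{1}$ and $I = C_{2}$ --- which have only the two objects $0$ and $1$ --- the monadicity constraint on the essential image of $R_{I}$ is vacuous once the endpoints are monadic, so $R$ induces equivalences on all hom-\icats{} between monadic objects. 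The same naturality promotes the pointwise adjunctions to an adjunction $U \dashv R$ of \itcats{} (see the last paragraph), which together with the computation of the image proves the first assertion and the equivalence $\MND(\mathcal{X})_{\lax} \isoto \ADJ(\mathcal{X})_{\lax,\txt{mnd}}$.

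For the last equivalence I would invoke \cref{cor:ADJlaxeq}, which identifies $\ADJ(\mathcal{X})_{\lax}$ with $\FUN(C_{1}, \mathcal{X})_{\radj}$ by restricting an adjunction to its right-adjoint $1$-morphism. Under this identification an adjunction $\fadj \to \mathcal{X}$ is monadic exactly when the corresponding $1$-morphism is a monadic right adjoint, so it carries $\ADJ(\mathcal{X})_{\lax,\txt{mnd}}$ onto the full sub-\itcat{} $\FUN(\Delta^{1}, \mathcal{X})_{\txt{mndradj}}$ of $\FUN(\Delta^{1}, \mathcal{X})$ on the monadic right adjoints, giving $\ADJ(\mathcal{X})_{\lax,\txt{mnd}} \isoto \FUN(\Delta^{1}, \mathcal{X})_{\txt{mndradj}}$ and hence the displayed chain.

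I expect the main obstacle to be the assembly invoked above: promoting Zaganidis's pointwise adjunctions to a genuine adjunction of \itcats{}, \ie{} producing the unit and counit of $U \dashv R$ as \itcatl{} natural transformations with coherent triangle identities. The subtlety is that \cref{thm:Zaganidis} concerns the $\infty$-categories $\Fun(\fadj \otimes^{\lax} I, \mathcal{X})$, which agree with $\Fun(I, \ADJ(\mathcal{X})_{\lax})$ only on maximal sub-\igpds{} (the Gray tensor product does not distribute over products), so the adjunction data must be reconstructed component by component. This is a parametrized adjoint functor statement that follows formally from the uniqueness of adjoints once the left adjoints are known to be natural in the index --- in the same spirit as the compatibility of mates with composition established in \cref{rmk:adjcomp} --- together with the fact that $\FUN(\blank, \mathcal{X})_{\lax}$ converts the presentation of a general \itcat{} as a colimit of objects of $\bTt$ into a limit presentation.
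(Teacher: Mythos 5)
Your route is the same as the paper's: apply \cref{thm:Zaganidis} for every $I \in \bTt$, assemble the pointwise adjunctions into an adjunction of \itcats{}, and finish with \cref{cor:ADJlaxeq}. The gap is exactly in the assembly step, which you declare ``automatic'' from uniqueness of right adjoints, and which the paper instead isolates and proves as \cref{lem:laxadj}. Uniqueness of adjoints does \emph{not} give naturality of the $R_{I}$ in $I$: since it is the left adjoints (the restriction functors) that are natural in $I$, the pointwise right adjoints assemble a priori only into a \emph{lax} natural transformation (this is \cref{propn:laxfunadj}, or \cref{cor:mateC1}), whose naturality squares are the mate/Beck--Chevalley comparisons between $R_{J}\circ(\text{restriction})$ and $(\text{restriction})\circ R_{I}$. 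These comparisons are not invertible in general, and their invertibility is an explicit hypothesis of \cref{lem:laxadj} (``for every morphism in $\bTt$ the mate square for these adjoints commutes''). So your argument needs an actual verification that these mates are equivalences for Zaganidis's adjunctions --- for instance using that each $R_{I}$ is fully faithful and that restriction along any map in $\bTt$ preserves the objectwise monadicity condition cutting out the images --- rather than an appeal to contractibility of the space of right adjoints.

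Relatedly, even once naturality is in place you only obtain the functor $R$ as a map of $\bTt^{\op}$-diagrams of spaces; producing the unit and counit as \itcatl{} transformations satisfying the triangle identities does not ``follow formally,'' precisely because, as you note, $\Fun(\fadj \otimes^{\lax} I, \mathcal{X})$ agrees with $\Fun(I, \ADJ(\mathcal{X})_{\lax})$ only on underlying \igpds{}. The paper's \cref{lem:laxadj} supplies this by an explicit construction: the natural comparison $\mathcal{U} \otimes^{\lax} (\mathcal{V} \times \mathcal{W}) \to (\mathcal{U} \otimes^{\lax} \mathcal{V}) \times \mathcal{W}$ converts the levelwise units and counits into functors $\FUN(\fadj, \mathcal{X})_{\lax} \times [1] \to \FUN(\fadj, \mathcal{X})_{\lax}$ (and likewise for $\fmnd$), after which the adjunction is checked on objects and morphisms, where it reduces to the adjunctions for $R_{C_{0}}$ and $R_{C_{1}}$. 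With these two points supplied --- commuting mate squares and the actual construction of unit and counit --- your argument becomes the paper's proof, which is literally ``apply \cref{lem:laxadj} to the adjunctions from \cref{thm:Zaganidis}''; your identification of the image and the final passage through \cref{cor:ADJlaxeq} to $\FUN(\Delta^{1},\mathcal{X})_{\mndradj}$ are fine.
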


\begin{remark}
  For monads on a single, fixed \icat{} $\mathcal{C}$ (interpreted as
  associative algebras in endomorphisms of $\mathcal{C}$) this
  comparison (in the case where $\mathcal{X}$ is $\CATI$) has
  previously been obtained by Heine~\cite{Heine} by a different method.
\end{remark}

\begin{remark}
  By \cref{ex:cosmex}(ii) if $\mathcal{X}$ is cosmifiable then so is
  $\mathcal{X}^{\twop}$. Here we have equivalences
  \[ \ADJ(\mathcal{X}^{\twop})_{\lax} \simeq \FUN(\fadj^{\twop},
    \mathcal{X})^{\twop}_{\colax} \simeq \ADJ(\mathcal{X})^{\twop}_{\colax},\]
  using the equivalence $\fadj^{\twop} \simeq \fadj$ of
  \cref{rmk:adjsymm} together with \cref{cor:FUNlax2op}, and
  \[ \MND(\mathcal{X}^{\twop})_{\lax} \simeq \FUN(\fmnd^{\twop},
    \mathcal{X})^{\twop}_{\colax} =:
    \txt{COMND}(\mathcal{X})^{\twop}_{\colax}. \]
  Note that here $\fmnd^{\twop}$ is equivalent to the full subcategory
  of $\fadj$ on the object $+$. Applying \cref{cor:mndadj} to
  $\mathcal{X}^{\twop}$ and reversing 2-morphisms we then see that
  the forgetful functor
  \[ \ADJ(\mathcal{X})_{\colax} \to
    \txt{COMND}(\mathcal{X})_{\colax} \]
  given by restriction to the object $+$ has a fully faithful
  \emph{left} adjoint with image the full sub-\itcat{} of
  \emph{comonadic} adjunctions.
\end{remark}

For the proof of \cref{cor:mndadj} we use the following observation:
\begin{lemma}\label{lem:laxadj}
  Suppose $\mathcal{X}$ is an \itcat{} and $\phi \colon \mathcal{A}
  \to \mathcal{B}$ is a functor of \itcats{} such that for every $I
  \in \bTt$ the induced functor
\[ (\phi \otimes^{\lax} I)^{*} \colon \Fun(\mathcal{B} \otimes^{\lax} I, \mathcal{X}) \to
  \Fun(\mathcal{A} \otimes^{\lax} I, \mathcal{X}) \]
has a right adjoint $R_{I}$, and for every morphism in $\bTt$ the mate square
for these adjoints commutes. Then the functor
\[ \phi^{*} \colon \FUN(\mathcal{B}, \mathcal{X})_{\lax} \to
  \FUN(\mathcal{A}, \mathcal{X})_{\lax} \]
has a right adjoint, given on objects by $R_{C_{0}}$, and with unit
and counit transformations given objectwise by the unit and counit for
$R_{C_{0}}$ and on morphisms by the unit and counit for $R_{C_{1}}$.
\end{lemma}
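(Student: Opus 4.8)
The plan is to move the whole problem into the world of $\bTt$-indexed presheaves, reduce the hypothesis to a single transformation whose components are left adjoints, apply the mate calculus of \cref{propn:laxfunadj} to build the candidate right adjoint $R$, and only then promote the result to an adjunction in $\CatIT$.

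First I would view $(\infty,2)$-categories as complete Segal $\bTt$-spaces, so that $\FUN(\mathcal{B},\mathcal{X})_{\lax}$ is the presheaf $I\mapsto\Fun(\mathcal{B}\otimes^{\lax}I,\mathcal{X})^{\simeq}$ on $\bTt$ (and likewise for $\mathcal{A}$), with $\phi^{*}$ the map induced by the functors $\phi\otimes^{\lax}I$. Enhancing levelwise, one gets functors $\mathcal{E}_{\mathcal{A}},\mathcal{E}_{\mathcal{B}}\colon\bTt^{\op}\to\CATI$, $I\mapsto\Fun((\blank)\otimes^{\lax}I,\mathcal{X})$, and a natural transformation $\Phi\colon\mathcal{E}_{\mathcal{B}}\to\mathcal{E}_{\mathcal{A}}$ refining $\phi^{*}$. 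The hypothesis then says exactly that, regarded as a $1$-morphism of $\FUN(\bTt^{\op},\CATI)_{\lax}$ (via the inclusion of $\FUN(\bTt^{\op},\CATI)$ as the transformations with commuting squares), $\Phi$ has each component $(\phi\otimes^{\lax}I)^{*}$ a left adjoint and each of its naturality squares --- i.e.\ each of the mate squares attached to a morphism of $\bTt$ --- invertible.

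Next I would invoke \cref{propn:laxfunadj}(i) with $\mathcal{Y}=\bTt^{\op}$ (an $\infty$-category, hence an $(\infty,2)$-category) and target $\CATI$: it gives that $\Phi$ is a left adjoint in $\FUN(\bTt^{\op},\CATI)_{\lax}$, with right adjoint the mate, which is levelwise $R_{I}$ with lax naturality squares the mate squares. Since by hypothesis these commute, the right adjoint $\underline{R}$ lies in $\FUN(\bTt^{\op},\CATI)$, i.e.\ is an honest natural transformation; its unit and counit are levelwise the $\eta_{I}$ and $\epsilon_{I}$, and the triangle identities hold levelwise. Applying $(\blank)^{\simeq}$ levelwise turns $\underline{R}$ into a map of complete Segal $\bTt$-spaces, i.e.\ into a functor of $(\infty,2)$-categories $R\colon\FUN(\mathcal{A},\mathcal{X})_{\lax}\to\FUN(\mathcal{B},\mathcal{X})_{\lax}$ (since $(\blank)^{\simeq}\circ\mathcal{E}_{\mathcal{A}}\simeq\FUN(\mathcal{A},\mathcal{X})_{\lax}$ and similarly for $\mathcal{B}$); its value at $C_{0}$ is $R_{C_{0}}$, so $R$ is $R_{C_{0}}$ on objects.

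The remaining, and hardest, step is to promote $R$ to a right adjoint of $\phi^{*}$ in $\CatIT$: to produce unit and counit $2$-morphisms $\eta\colon\id\to R\phi^{*}$ and $\epsilon\colon\phi^{*}R\to\id$ in $\CatIT$ and verify the triangle identities. The subtlety is that a $2$-morphism of $\CatIT$ between functors of $(\infty,2)$-categories is \emph{not} seen by the simplicial-space or $\bTt$-space model --- it is detected via the Gray tensor product, as a functor out of a product of the form $\mathcal{V}\times C_{1}$ --- so the bare levelwise units $\eta_{I}$ do not by themselves assemble into $\eta$. What makes the assembly go through is precisely that the $R_{I}$ are compatible over \emph{all} morphisms of $\bTt$, not just the generating cells: using the colimit decompositions of $C_{1}\otimes^{\colax}C_{1}$, $C_{2}\otimes^{\colax}C_{1}$ and $C_{2}\otimes^{\colax}C_{2}$ from \cref{lem:graytenscolim} together with the Segal conditions, the construction of $\eta$ and $\epsilon$ and the verification of the triangle identities reduce to exhibiting the relevant data on the three cells $C_{0},C_{1},C_{2}$, where it is nothing other than the units, counits and triangle identities of the adjunctions $(\phi\otimes^{\lax}C_{i})^{*}\dashv R_{C_{i}}$. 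Tracing this through yields exactly the description in the statement: $\eta$ and $\epsilon$ are objectwise the unit and counit of $R_{C_{0}}$ and on morphisms the unit and counit of $R_{C_{1}}$. The main obstacle is therefore this last, bookkeeping-heavy identification of the Gray-tensor presentation of $2$-morphisms in $\CatIT$ with the $\bTt$-indexed family $\{R_{I}\}$ --- which is also the reason the hypothesis is stated with the mate-square condition for every morphism of $\bTt$ rather than only for the cells.
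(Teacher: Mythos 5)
Your first three steps coincide with the paper's argument: you package the functors $(\phi\otimes^{\lax}I)^{*}$ into a morphism in $\Fun(\bTt^{\op},\CatI)$, apply \cref{propn:laxfunadj} to see that the right adjoints $R_{I}$ assemble into an a priori lax transformation which is strict because the mate squares commute, and pass to underlying $\infty$-groupoids levelwise to obtain the functor $R$ with $R\simeq R_{C_{0}}$ on objects. Up to this point the proposal is correct and follows the same route as the paper.

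The gap is in your final step, which is exactly where the remaining content lies. You correctly observe that the levelwise units do not by themselves assemble into a natural transformation $\id\to R\phi^{*}$, but the fix you propose --- a reduction to the cells $C_{0},C_{1},C_{2}$ via \cref{lem:graytenscolim} and the Segal conditions --- does not address the actual problem and is not how that lemma can be used here. The unit must be a functor $\FUN(\mathcal{B},\mathcal{X})_{\lax}\times[1]\to\FUN(\mathcal{B},\mathcal{X})_{\lax}$ (a \emph{cartesian} product with $[1]$, not a Gray tensor product), equivalently a map of $\bTt$-presheaves $\Map(I,\FUN(\mathcal{B},\mathcal{X})_{\lax})\to\Map(I\times[1],\FUN(\mathcal{B},\mathcal{X})_{\lax})$ coherent over \emph{all} of $\bTt$; specifying compatible data on the three cells does not produce such a map, and the decompositions of $C_{i}\otimes^{\colax}C_{j}$ in \cref{lem:graytenscolim} do not even enter, since no Gray product of cells occurs in this assembly. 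The missing device is the natural comparison map $\mathcal{U}\otimes^{\lax}(\mathcal{V}\times\mathcal{W})\to(\mathcal{U}\otimes^{\lax}\mathcal{V})\times\mathcal{W}$ (generalizing the localization $\mathcal{B}\otimes^{\lax}C_{1}\to\mathcal{B}\times C_{1}$ of \cref{rmk:graytocart}): the unit of $R_{I}$ gives, naturally in $I$, maps $\Map(\mathcal{B}\otimes^{\lax}I,\mathcal{X})\to\Map((\mathcal{B}\otimes^{\lax}I)\times[1],\mathcal{X})$, and precomposing with $\mathcal{B}\otimes^{\lax}(I\times[1])\to(\mathcal{B}\otimes^{\lax}I)\times[1]$ lands these in $\Map(I,\FUN([1],\FUN(\mathcal{B},\mathcal{X})_{\lax}))$, which is what yields $\eta$ (and dually $\epsilon$). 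Finally, the paper does not verify the triangle identities cell by cell; it checks that the induced endomorphisms of $\phi^{*}$ and $R$ are equivalences objectwise and on morphisms (where they come from the adjunction equivalences for $R_{C_{0}}$ and $R_{C_{1}}$), which is the standard criterion for an adjunction and avoids precisely the coherence bookkeeping your sketch defers.
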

\begin{proof}
  The functors $(\phi \otimes^{\lax} I)^{*}$ are natural in $I$, and
  so give a morphism in $\Fun(\bTt^{\op}, \CatI)$, with $\phi^{*}$
  given as the induced morphism in $\Fun(\bTt^{\op}, \mathcal{S})$.
  The right adjoints assemble to a morphism in
  $\FUN(\bTt^{\op}, \CATI)_{\lax}$ by
  \cref{propn:laxfunadj}, but as the mate squares commute
  this a priori lax natural transformation is an
  ordinary natural transformation, and the units and counits determine
  an adjunction in $\FUN(\bTt^{\op}, \CATI)$. Passing to underlying
  $\infty$-groupoids, the right adjoints give a functor
  $R \colon \FUN(\mathcal{A}, \mathcal{X})_{\lax} \to
  \FUN(\mathcal{B}, \mathcal{X})_{\lax}$ given on objects by
  $R_{C_{0}}$, as required. To obtain the unit and counit
  transformations, observe that for \itcats{}
  $\mathcal{U},\mathcal{V}, \mathcal{W}$ there is a natural map
  $\mathcal{U} \otimes^{\lax} (\mathcal{V} \times \mathcal{W}) \to
  (\mathcal{U} \otimes^{\lax} \mathcal{V}) \times \mathcal{W}$, determined by
  the natural maps
  $\mathcal{U} \otimes^{\lax} (\mathcal{V} \times \mathcal{W}) \to
  \mathcal{U} \otimes^{\lax} \mathcal{V}$ and
  $\mathcal{U} \otimes^{\lax} (\mathcal{V} \times \mathcal{W}) \to
  \mathcal{U} \otimes^{\lax} \mathcal{W} \to \mathcal{W}$. The levelwise
  unit gives maps
  \[ \Map(I, \FUN(\mathcal{B}, \mathcal{X})_{\lax}) \simeq \Map(\mathcal{B} \otimes^{\lax} I, \mathcal{X}) \to \Map([1],
    \Fun(\mathcal{B} \otimes^{\lax} I, \mathcal{X}) \simeq \Map((\mathcal{B}
    \otimes^{\lax} I) \times [1], \mathcal{X}),\]
  natural in $I$, where we can now apply the map $\mathcal{B} \otimes^{\lax}
  (I \times [1]) \to (\mathcal{B} \otimes^{\lax} I) \times [1]$ to get
  a natural map
  \[ \Map(I, \FUN(\mathcal{B}, \mathcal{X})_{\lax}) \to \Map(\mathcal{B} \otimes^{\lax}
  (I \times [1]), \mathcal{X}) 
  \simeq \Map(I, \FUN([1],
  \FUN(\mathcal{B}, \mathcal{X})_{\lax})),\]
which corresponds to a functor of \itcats{}
\[ \FUN(\mathcal{B}, \mathcal{X})_{\lax} \times [1] \to
  \FUN(\mathcal{B}, \mathcal{X})_{\lax},\]
as required. By naturality the two diagrams
\[
  \begin{tikzcd}
    \mathcal{B} \otimes^{\lax} (I \times [0]) \arrow{r}{\sim} \arrow{d} &
    (\mathcal{B} \otimes^{\lax} I) \times [0] \arrow{d} \\
    \mathcal{B} \otimes^{\lax} (I \times [1]) \arrow{r} &
    (\mathcal{B} \otimes^{\lax} I) \times [1]     
  \end{tikzcd}
  \]
commute, which implies that this is a natural transformation from the
identity to $R\phi^{*}$. Similarly the levelwise counits give a natural
transformation
\[ \FUN(\mathcal{A}, \mathcal{X})_{\lax} \times [1] \to
  \FUN(\mathcal{A}, \mathcal{X})_{\lax}\]
from $\phi^{*}R$ to the identity. To show that this gives an
adjunction it suffices to check that the induced natural
transformations $R \to R$ and $\phi^{*} \to \phi^{*}$ are given by
equivalences for all objects and morphisms, which is clear since these
are then induced by the adjunction equivalences for $R_{C_{0}}$ and $R_{C_{1}}$.
\end{proof}

\begin{proof}[Proof of \cref{cor:mndadj}]
  Apply \cref{lem:laxadj} to the adjunctions from \cref{thm:Zaganidis}.
\end{proof}

\begin{cor}
  Let $\mathcal{X}$ be a cosmifiable \itcat{}. The inclusion \[\Fun([1], \iota_{1}\mathcal{X})_{\txt{mndradj}} \hookrightarrow
  \Fun([1], \iota_{1}\mathcal{X})_{\txt{radj}}\] has a left adjoint, which takes a
  right adjoint to the right adjoint of the associated monadic
  adjunction. \qed
\end{cor}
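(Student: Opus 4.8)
The plan is to deduce this from \cref{cor:mndadj} by passing to underlying $\infty$-categories and combining with the equivalence of \cref{cor:ADJlaxeq}. First I would observe that \cref{cor:mndadj} gives an adjunction of \itcats{}
\[ \MND(\mathcal{X})_{\lax} \rightleftarrows \ADJ(\mathcal{X})_{\lax}, \]
where the left adjoint is the forgetful functor, the right adjoint is fully faithful with image the monadic adjunctions $\ADJ(\mathcal{X})_{\lax,\txt{mnd}}$, and moreover we have the string of equivalences
\[ \MND(\mathcal{X})_{\lax} \isoto \ADJ(\mathcal{X})_{\lax,\txt{mnd}} \isoto \FUN(\Delta^{1},\mathcal{X})_{\txt{mndradj}}. \]
Applying the functor $\iota_{1}$ (taking underlying \icat{}) preserves adjunctions and fully faithful functors, so we get a fully faithful right adjoint
\[ \Mnd(\mathcal{X})_{\lax} \hookrightarrow \Adj(\mathcal{X})_{\lax} \]
and an induced equivalence $\iota_{1}\FUN(\Delta^{1},\mathcal{X})_{\txt{mndradj}} \simeq \Fun([1],\iota_{1}\mathcal{X})_{\txt{mndradj}}$ onto the full subcategory of monadic right adjoints inside $\Fun([1],\iota_{1}\mathcal{X})$.

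Next I would apply $\iota_{1}$ to the equivalence $\ADJ(\mathcal{X})_{\lax} \isoto \FUN(C_{1},\mathcal{X})_{\radj}$ of \cref{cor:ADJlaxeq}, which identifies $\Adj(\mathcal{X})_{\lax}$ with $\Fun([1],\iota_{1}\mathcal{X})_{\txt{radj}}$, compatibly with the inclusion of monadic objects on both sides. Under this identification, the fully faithful right adjoint from the previous paragraph becomes the inclusion
\[ \Fun([1],\iota_{1}\mathcal{X})_{\txt{mndradj}} \hookrightarrow \Fun([1],\iota_{1}\mathcal{X})_{\txt{radj}}, \]
and \emph{its left adjoint} is exactly the composite: a right adjoint morphism is sent (via the inverse equivalence of \cref{cor:ADJlaxeq}) to the corresponding lax morphism of adjunctions, then to its induced monad by the forgetful functor $\Adj(\mathcal{X})_{\lax} \to \Mnd(\mathcal{X})_{\lax}$, and finally back (via the monadic equivalence and \cref{cor:ADJlaxeq} again) to the right adjoint of the associated monadic adjunction. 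Since on objects \cref{thm:rvadj} identifies an adjunction with its underlying right adjoint, this description of the left adjoint on objects is precisely the one claimed in the statement.

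The only slightly delicate point — which I would address by appealing directly to \cref{cor:ADJlaxeq} and its compatibility with the full subcategories of monadic/right adjoints — is checking that the subcategories $\FUN(\Delta^{1},\mathcal{X})_{\txt{mndradj}}$ and $\FUN(C_{1},\mathcal{X})_{\radj}$ match up correctly under the composite of equivalences, rather than giving something shifted by an op or a reversal of adjoints. This follows because in \cref{cor:mndadj} both equivalences and the forgetful functor $\ADJ \to \MND$ are all compatible with ``passing to the right adjoint'' as phrased there, and \cref{cor:ADJlaxeq} is stated using the morphism $C_{1}\to\fadj$ picking out the right adjoint, so no variance mismatch occurs. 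I do not expect any genuine obstacle here: the statement is a formal consequence of applying $\iota_{1}$ to results already established, and the work is bookkeeping of the identifications. $\qed$
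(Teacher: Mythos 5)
Your argument is correct and is essentially the paper's own: the statement is treated there as an immediate consequence of \cref{cor:mndadj} (whose equivalences already incorporate \cref{cor:ADJlaxeq}), obtained exactly as you do by passing to underlying \icats{} and identifying the fully faithful right adjoint with the inclusion of monadic right adjoints, so that the forgetful functor supplies the left adjoint with the stated description on objects. Your extra care about the compatibility of the identifications is just the bookkeeping the paper leaves implicit.
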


\begin{cor}\label{cor:Mndcocart}
  Let $\mathcal{X}$ be a cosmifiable \itcat{}. The functor $\Mnd(\mathcal{X})_{\lax} \to \iota_{1}\mathcal{X}$, taking a monad to the
  object it acts on, has cocartesian morphisms over morphisms in
  $\mathcal{X}$ that are right adjoints. If $T$ is a monad on $X$
  and $\rho \colon X \to Y$ is a morphism with left
  adjoint $\lambda$ then the cocartesian morphism over $\rho$ has
  target $\rho T \lambda$ and is given by the transformation $\rho T \lambda \rho \to
  \rho T$ coming from the counit.
\end{cor}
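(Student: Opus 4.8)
The plan is to reduce the statement, via \cref{cor:mndadj}, to a computation with an evaluation functor on an arrow \icat{}, and then to read off the explicit form of the cocartesian lift.

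\emph{Reduction.} First I would pass to underlying \icats{} in \cref{cor:mndadj}; using the routine compatibility $\iota_{1}\FUN(C_{1},\mathcal{X})\simeq\Fun([1],\iota_{1}\mathcal{X})$ (the inclusion $\CatI\hookrightarrow\CatIT$ is product-preserving and $\iota_{1}$ is its right adjoint), this gives an equivalence $\Mnd(\mathcal{X})_{\lax}\simeq\Fun([1],\iota_{1}\mathcal{X})_{\mndradj}$ onto the full subcategory of the arrow \icat{} spanned by the monadic right adjoints. Chasing this together with \cref{cor:ADJlaxeq}, a monad $T$ on $X$ corresponds to the right adjoint $r_{T}\colon\mathcal{A}_{T}\to X$ of its Eilenberg--Moore adjunction, and the object $T$ acts on is the codomain $X$ of $r_{T}$; a short unwinding then identifies the functor $\Mnd(\mathcal{X})_{\lax}\to\iota_{1}\mathcal{X}$ of the statement with the target-evaluation functor $\mathrm{ev}_{1}\colon\Fun([1],\iota_{1}\mathcal{X})_{\mndradj}\to\iota_{1}\mathcal{X}$. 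Thus it suffices to produce a cocartesian lift of each right adjoint for $\mathrm{ev}_{1}$ on the monadic arrows.

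\emph{The lift.} Fix a monad $T$ on $X$ and a right adjoint $\rho\colon X\to Y$ with left adjoint $\lambda$. Then $\rho r_{T}\colon\mathcal{A}_{T}\to Y$ is again a right adjoint, with left adjoint $l_{T}\lambda$ and hence associated monad $(\rho r_{T})(l_{T}\lambda)\simeq\rho T\lambda$ on $Y$, in general not monadic. By \cref{cor:mndadj} and \cref{cor:ADJlaxeq}, the inclusion of the monadic right adjoints into the right adjoints is reflective on underlying \icats{}, with the reflection of $\rho r_{T}$ being the right adjoint $r_{\rho T\lambda}\colon\mathcal{A}_{\rho T\lambda}\to Y$ of the monadic adjunction it generates, and with unit the Eilenberg--Moore comparison $c_{0}\colon\mathcal{A}_{T}\to\mathcal{A}_{\rho T\lambda}$; since this comparison lies over $Y$, the unit lies over $\iota_{1}\mathcal{X}$. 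I would take as the candidate lift the commuting square of monadic right adjoints with vertical edges $r_{T}$, $r_{\rho T\lambda}$, bottom edge $\rho$ and top edge $c_{0}$. To see it is $\mathrm{ev}_{1}$-cocartesian, observe that for a monadic right adjoint $g$ the comparison map appearing in the definition of a cocartesian morphism is a map
\[ \Map(r_{\rho T\lambda},g)\;\longrightarrow\;\Map(r_{T},g)\times_{\Map(X,\,\mathrm{ev}_{1}(g))}\Map(Y,\,\mathrm{ev}_{1}(g)) \]
(the \icats{} being $\Fun([1],\iota_{1}\mathcal{X})$ and $\iota_{1}\mathcal{X}$, and the leg $\Map(Y,\mathrm{ev}_{1}(g))\to\Map(X,\mathrm{ev}_{1}(g))$ being precomposition with $\rho$), and the target here is, by the elementary natural equivalence $\Map(f,g)\times_{\Map(b,\,\mathrm{ev}_{1}(g))}\Map(b',\,\mathrm{ev}_{1}(g))\simeq\Map(\beta f,g)$ valid in any arrow \icat{} (for $f\colon a\to b$ and $\beta\colon b\to b'$), just $\Map_{\Fun([1],\iota_{1}\mathcal{X})}(\rho r_{T},g)$. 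Under this identification the comparison map is precomposition with the reflection unit $\rho r_{T}\to r_{\rho T\lambda}$, which is an equivalence since $g$ is monadic. Hence $\mathrm{ev}_{1}$ on the monadic arrows has a cocartesian lift of every right adjoint, the lift at $r_{T}$ having target $r_{\rho T\lambda}$ --- that is, target the monad $\rho T\lambda$; transporting back along the reduction gives the first two assertions.

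\emph{The explicit form.} It remains to identify this cocartesian morphism, as a lax morphism of monads, with the one whose underlying $1$-morphism is $\rho$ and whose naturality $2$-cell on the generating endomorphism of $\fmnd$ is $\rho T\varepsilon\colon\rho T\lambda\rho\to\rho T$, where $\varepsilon\colon\lambda\rho\to\id_{X}$ is the counit. That the underlying $1$-morphism is $\rho$ is immediate from the construction. For the $2$-cell I would unwind the equivalences of \cref{cor:ADJlaxeq} (hence \cref{thm:rvadj} and \cref{propn:laxfunadj}): these reconstruct a lax morphism of monadic adjunctions from its commuting square of right adjoints by passing to mates, and its restriction along $\fmnd\hookrightarrow\fadj$ has naturality $2$-cell on the monad's generating endomorphism equal to the pasting of the invertible comparison square for $r$ with the mate square for $l$. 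Using the triangle identities for $l_{T}\dashv r_{T}$ together with the defining equations $r_{\rho T\lambda}c_{0}\simeq\rho r_{T}$ and $c_{0}(l_{T}\lambda)\simeq l_{\rho T\lambda}$ of the comparison, this pasting collapses to $\rho T$ whiskered with the counit $\varepsilon$. I expect this last diagram chase to be the main obstacle of the proof; everything before it is formal.
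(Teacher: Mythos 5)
Your argument is correct and is essentially the paper's: reduce via \cref{cor:mndadj} and \cref{cor:ADJlaxeq} to the arrow \icat{} over $\mathrm{ev}_{1}$, use that postcomposition with $\rho$ gives cocartesian lifts among right adjoints, and transfer to the monadic right adjoints via the reflection, identifying the target as $\rho T\lambda$ and the $2$-cell as $\rho T\varepsilon$ by unwinding mates. The only (harmless) difference is that where the paper invokes the general fact that the localization $\Adj(\mathcal{X})_{\lax}\to\Mnd(\mathcal{X})_{\lax}$, having a fully faithful right adjoint over $\iota_{1}\mathcal{X}$, carries cocartesian morphisms to cocartesian morphisms, you verify the universal property of the reflected lift directly on mapping spaces, using the reflectivity statement the paper records immediately after \cref{cor:mndadj}.
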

\begin{proof}
  We have a commutative diagram
  \[
    \begin{tikzcd}
      \Adj(\mathcal{X})_{\lax} \arrow{r}{\sim} \arrow{dr} & \Fun([1],
      \iota_{1}\mathcal{X})_{\txt{radj}} \arrow[hookrightarrow]{r}  \arrow{d}& \Fun([1], \iota_{1}\mathcal{X}).
      \arrow{dl}{\txt{ev}_{1}} \\
      & \iota_{1}\mathcal{X}.
    \end{tikzcd}
  \]
  Here ${\txt{ev}_{1}} \colon \Fun([1], \iota_{1}\mathcal{X}) \to \iota_{1}\mathcal{X}$ is a
  cocartesian fibration, with the cocartesian morphisms given by
  composition. Since a composite of right adjoints is a right adjoint,
  the full subcategory $\Fun([1],
  \iota_{1}\mathcal{X})_{\txt{radj}}$ has cocartesian morphisms over maps in
  $\iota_{1}\mathcal{X}$ that are right adjoints, hence the same is true for the
  equivalent \icat{} $\Adj(\mathcal{X})_{\lax}$.
  
  Now observe that we have a commutative triangle
  \[
    \begin{tikzcd}
    \Adj(\mathcal{X})_{\lax} \arrow{rr}{L} \arrow{dr} & & \Mnd(\mathcal{X})_{\lax}
    \arrow{dl} \\
     & \iota_{1}\mathcal{X},      
    \end{tikzcd}
  \]
  and that the right adjoint $R \colon \Mnd(\mathcal{X})_{\lax} \hookrightarrow
  \Adj(\mathcal{X})_{\lax}$ also commutes with the functors to $\iota_{1}\mathcal{X}$. In
  this situation $L$ necessarily takes a cocartesian morphism in
  $\Adj(\mathcal{X})_{\lax}$ to a cocartesian morphism in
  $\Mnd(\mathcal{X})_{\lax}$, hence $\Mnd(\mathcal{X})_{\lax}$ also has
  cocartesian morphisms over right adjoints in $\iota_{1}\mathcal{X}$. The
  description of the cocartesian morphisms in $\Mnd(\mathcal{X})_{\lax}$ now follows from the
  description of those in $\Adj(\mathcal{X})_{\lax}$.
\end{proof}

\section{Lax Morphisms of Endofunctors}
In this section we briefly consider endofunctors and (co)lax morphisms
between them, and the forgetful functor from our \itcats{} of monads
and (co)lax morphisms.
\begin{defn}
Let $\fend$ be the universal \icat{} with an endomorphism,
given by the pushout square
\[
  \begin{tikzcd}
    \partial C_{1} \arrow{r} \arrow{d} & C_{0} \arrow{d} \\
    C_{1} \arrow{r} & \fend.
  \end{tikzcd}
\]
Then $\fend$ can be identified with the 1-category
$B\mathbb{N}$ corresponding to the free monoid $\mathbb{N}$. If
$\mathcal{X}$ is an \itcat{}, we define
\[ \END(\mathcal{X})_{\pcolax}  := \FUN(\fend,
  \mathcal{X})_{\pcolax},\]
and write $\End(\mathcal{X})_{\pcolax}$ for the underlying \icat{}.
\end{defn}

\begin{remark}
  Since the Gray tensor product preserves colimits in each variable by
  \cref{ass:gray}, the pushout square above induces a
  pullback square of \itcats{}
\[
  \begin{tikzcd}
    \END(\mathcal{X})_{\pcolax} \arrow{r} \arrow{d} &
    \FUN(C_{1}, \mathcal{X})_{\pcolax} \arrow{d} \\
    \mathcal{X} \arrow{r}{\txt{diag}} & \mathcal{X} \times \mathcal{X},
  \end{tikzcd}
  \]
where the right vertical map is given by composition with the two
inclusions $C_{0} \hookrightarrow C_{1}$.
\end{remark}
  
\begin{remark}
There is also a functor $\mathfrak{end} \to \mathfrak{mnd}$ picking out the
underlying endofunctor of the universal monad, which induces a
commutative triangle
\[\begin{tikzcd}
  \MND(\mathcal{X})_{\pcolax} \arrow{dr} \arrow{rr} &
  & \END(\mathcal{X})_{\pcolax} \arrow{dl} \\
   & \mathcal{X}.
 \end{tikzcd}
\]
\end{remark}

In the case where $\mathcal{X}$ is the \itcat{} of \icats{}, we make
some observations on how the cocartesian morphisms of
\cref{cor:Mndcocart} behave in this triangle:

\begin{propn}\label{propn:EndMndloccoC}\ 
  \begin{enumerate}[(i)]
  \item The projection $\End(\CATI)_{\lax} \to \CatI$ has locally
    cocartesian morphisms and locally cartesian morphisms over
    functors that are right adjoints.
  \item The forgetful functor $\Mnd(\CATI)_{\lax} \to \End(\CATI)_{\lax}$
    preserves these locally cocartesian morphisms.
  \end{enumerate}
\end{propn}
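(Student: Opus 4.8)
The plan is to reduce everything to an explicit description of $\End(\CATI)_{\lax}$ together with the elementary fact that an adjunction $\lambda \dashv \rho$ of $\infty$-categories induces adjunctions on functor $\infty$-categories by pre- and post-composition.

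First I would record the explicit model. By the pullback square of the preceding remark, $\End(\CATI)_{\lax}$ is the fibre product of $\Fun(C_1,\CATI)_{\lax}$ with $\CatI$ over the diagonal $\CatI \to \CatI \times \CatI$, where $\Fun(C_1,\CATI)_{\lax} \to \CatI \times \CatI$ evaluates at the two endpoints of $C_1$. Unwinding the definition of a lax transformation between functors $C_1 \to \CATI$ (such a transformation assigns a lax square to the unique nonidentity morphism of $C_1$, together with its two components), one finds: an object of $\End(\CATI)_{\lax}$ is a pair $(\mathcal{C},e)$ with $e \colon \mathcal{C}\to\mathcal{C}$ an endofunctor; a morphism $(\mathcal{C},e)\to(\mathcal{D},e')$ is a pair $(u\colon \mathcal{C}\to\mathcal{D},\, \phi\colon e'u \Rightarrow ue)$; the composite of $(u,\phi)$ and $(v,\psi)$ is $(vu,\, (v\phi)\circ(\psi u))$; and the mapping space sits in a fibre sequence
\[ \Map_{\Fun(\mathcal{C},\mathcal{D})}(e'u, ue) \longrightarrow \Map_{\End(\CATI)_{\lax}}\big((\mathcal{C},e),(\mathcal{D},e')\big) \longrightarrow \Map_{\CatI}(\mathcal{C},\mathcal{D}), \]
the last map being $(u,\phi)\mapsto u$, which is also the projection to $\CatI$. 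Pinning this model down is really the only place care is needed, and it is routine given the pullback square and the treatment of $\FUN(-,-)_{\lax}$ in \S\ref{sec:gray}.

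For (i), let $\rho\colon \mathcal{C}\to\mathcal{D}$ be a right adjoint with left adjoint $\lambda$, unit $\eta\colon \id_\mathcal{D}\to\rho\lambda$ and counit $\epsilon\colon \lambda\rho\to\id_\mathcal{C}$. For an endofunctor $e$ on $\mathcal{C}$ I claim the morphism $\chi := (\rho,\, \rho e\epsilon)\colon (\mathcal{C},e)\to(\mathcal{D},\,\rho e\lambda)$, whose $2$-cell is the whiskered counit $\rho e\epsilon\colon (\rho e\lambda)\rho = \rho e\lambda\rho \Rightarrow \rho e$, is locally cocartesian over $\rho$. Indeed, for an endofunctor $e''$ on $\mathcal{D}$, composition with $\chi$ induces on the fibres of the mapping spaces over $\id_\mathcal{D}$ and over $\rho$ the map
\[ \Map_{\Fun(\mathcal{D},\mathcal{D})}(e'', \rho e\lambda) \longrightarrow \Map_{\Fun(\mathcal{C},\mathcal{D})}(e''\rho, \rho e), \qquad \sigma \longmapsto (\rho e\epsilon)\circ(\sigma\rho), \]
and this is exactly the equivalence witnessing the adjunction between the precomposition functors $\rho^{*}$ and $\lambda^{*}$ relating $\Fun(\mathcal{D},\mathcal{D})$ and $\Fun(\mathcal{C},\mathcal{D})$ (an adjunction since $\lambda \dashv \rho$), applied to the arguments $e''$ and $\rho e$. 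Hence $\chi$ is locally cocartesian. Dually — by the same argument with postcomposition in place of precomposition — if $\rho\colon \mathcal{C}'\to\mathcal{C}$ is a right adjoint with left adjoint $\lambda\colon \mathcal{C}\to\mathcal{C}'$ and unit $\eta\colon \id_\mathcal{C}\to\rho\lambda$, then for an endofunctor $e$ on $\mathcal{C}$ the morphism $(\rho,\, \eta e\rho)\colon (\mathcal{C}',\,\lambda e\rho)\to(\mathcal{C},e)$, with $2$-cell the whiskered unit $e\rho \Rightarrow \rho\lambda e\rho = \rho(\lambda e\rho)$, is locally cartesian over $\rho$, because composition with it realizes on fibres the adjunction equivalence $\Map_{\Fun(\mathcal{C}',\mathcal{C}')}(\lambda e\rho, e''') \simeq \Map_{\Fun(\mathcal{C}',\mathcal{C})}(e\rho, \rho e''')$ for the postcomposition adjunction $\lambda_{*}\dashv\rho_{*}$. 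The one computation to carry out by hand is that pasting $\chi$ with an arbitrary $(\id,\sigma)$ reproduces the unit/counit formula for the relevant functor-category adjunction; this is a direct diagram chase once the model above is fixed, and is the main (though modest) obstacle.

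For (ii), recall from \cref{cor:Mndcocart} that for a monad $T$ on $X$ and a right adjoint $\rho\colon X\to Y$ with left adjoint $\lambda$, the cocartesian morphism of $\Mnd(\CATI)_{\lax}\to\CatI$ over $\rho$ with source $(X,T)$ has target the monad $\rho T\lambda$ on $Y$ and underlying $2$-cell $\rho T\lambda\rho \to \rho T$ induced by the counit. The forgetful functor $\Mnd(\CATI)_{\lax}\to\End(\CATI)_{\lax}$ is induced by $\fend\to\fmnd$, commutes with the projections to $\CatI$, and on a morphism merely forgets the monad structure on the underlying endofunctor; it therefore sends this cocartesian morphism to the morphism $(\rho,\, \rho T\epsilon)\colon (X,T)\to(Y,\rho T\lambda)$ of $\End(\CATI)_{\lax}$, which is precisely the locally cocartesian morphism produced in (i). Since cocartesian morphisms are in particular locally cocartesian, and a locally cocartesian morphism over a fixed morphism with a fixed source is unique up to equivalence, every locally cocartesian morphism of $\Mnd(\CATI)_{\lax}\to\CatI$ lying over a right adjoint is carried to a locally cocartesian morphism of $\End(\CATI)_{\lax}\to\CatI$, as claimed.
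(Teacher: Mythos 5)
Your proposal is correct and follows essentially the same route as the paper: identify the fibre of the mapping space over a fixed functor $\rho$ with $\Map_{\Fun(\mathcal{C},\mathcal{D})}(e''\rho,\rho e)$, use the precomposition adjunction $\rho^*\dashv\lambda^*$ (resp.\ the postcomposition adjunction) induced by $\lambda\dashv\rho$ to exhibit the counit (resp.\ unit) morphism as locally cocartesian (resp.\ cartesian), and then deduce (ii) from the description of the cocartesian morphisms in \cref{cor:Mndcocart}. The only difference is that you spell out the composition law and the uniqueness argument in (ii), which the paper leaves implicit.
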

\begin{proof}
  We first prove (i). Suppose $(\mathcal{C},P)$ and $(\mathcal{D}, Q)$
  are objects of
  $\End(\CATI)_{\lax}$ and $R \colon \mathcal{C} \to \mathcal{D}$ is a morphism
  with a left adjoint $L \colon \mathcal{D} \to \mathcal{C}$. Then
  \[\Map_{\End(\CATI)_{\lax}}((\mathcal{C},P), (\mathcal{D},Q))_{R} \simeq
  \Map_{\Fun(\mathcal{D}, \mathcal{C})}(QR, RP). \]
  From the adjunction identities it is immediate that this space is
  equivalent to $\Map_{\End(\mathcal{D})}(Q,RPL)$, so the
  natural transformation $RPLR \to RP$ coming from the counit $LR \to
  \id$ gives a locally cocartesian morphism over $R$ from $P$ to
  $RPL$. Similarly, the space is equivalent to
  $\Map_{\End(\mathcal{C})}(LQR, P)$, and the natural transformation
  $QR \to RLQR$ coming from the unit $\id \to RL$ gives a locally
  cartesian morphism.

  (ii) is now clear from the description of the locally cocartesian
  morphisms in \cref{cor:Mndcocart}.
\end{proof}

\begin{defn}
  Let $\CatI^{\txt{radj}}$ denote the subcategory of $\CatI$
  containing only the morphisms that are right adjoints. Then we
  define $\Mnd(\CATI)_{\lax}^{\radj}$ and $\End(\CATI)_{\lax}^{\radj}$
  by pulling back $\Mnd(\CATI)_{\lax}$ and $\End(\CATI)_{\lax}$ along
  the inclusion $\CatI^{\radj} \to \CatI$, \ie{} we restrict to those
  lax morphisms between monads and endofunctors whose underlying
  morphism in $\CatI$ is a right adjoint.
\end{defn}

\begin{cor}\label{cor:Mndradjcocart}
  There is a commuting triangle
  \opctriangle{\Mnd(\CATI)_{\lax}^{\radj}}{\End(\CATI)_{\lax}^{\radj}}{\CatI^{\txt{radj}},}{}{}{}
  where the two downward functors are cocartesian fibrations, and the
  horizontal functor preserves cocartesian morphisms. Moreover, the
  right-hand functor is also a cartesian fibration.
\end{cor}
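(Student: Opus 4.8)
The plan is to deduce everything from \cref{propn:EndMndloccoC}, \cref{cor:Mndcocart}, and the standard criterion that a locally cocartesian fibration whose locally cocartesian morphisms are stable under composition is in fact a cocartesian fibration (together with its dual for cartesian fibrations). First I would observe that every morphism of $\CatI^{\radj}$ is by definition a right adjoint, so \cref{propn:EndMndloccoC}(i) immediately gives that the right-hand functor $\End(\CATI)_{\lax}^{\radj} \to \CatI^{\radj}$ admits a locally cocartesian and a locally cartesian lift of each of its morphisms, while \cref{cor:Mndcocart} gives that the left-hand functor $\Mnd(\CATI)_{\lax}^{\radj} \to \CatI^{\radj}$ admits a cocartesian lift of each of its morphisms; in particular both are locally cocartesian fibrations and the right-hand one is moreover a locally cartesian fibration. (Restricting the base of either functor along $\CatI^{\radj} \hookrightarrow \CatI$ is harmless, since passing to the pullback preserves the locally (co)cartesian morphisms lying over $\CatI^{\radj}$.) It then remains to verify the relevant closure-under-composition statements.

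For the right-hand functor I would use the explicit formulas recorded in the proof of \cref{propn:EndMndloccoC}: over a right adjoint $R \colon \mathcal{C} \to \mathcal{D}$ with left adjoint $L$, the locally cocartesian lift sends $(\mathcal{C}, P)$ to $(\mathcal{D}, RPL)$ via the transformation $RPLR \to RP$ induced by the counit $LR \to \id$, and the locally cartesian lift into $(\mathcal{D}, Q)$ has source $(\mathcal{C}, LQR)$ via the transformation $QR \to RLQR$ induced by the unit $\id \to RL$. Given a further right adjoint $R' \colon \mathcal{D} \to \mathcal{E}$ with left adjoint $L'$, so that $R'R$ is a right adjoint with left adjoint $LL'$, I would unwind the composition law for morphisms of $\End(\CATI)_{\lax}$ and check that composing the locally cocartesian lifts over $R$ and $R'$ recovers the locally cocartesian lift over $R'R$; this comes down precisely to the standard formula expressing the counit of the composite adjunction $LL' \dashv R'R$ in terms of the counits of $L \dashv R$ and $L' \dashv R'$, and the dual identity for units handles the locally cartesian lifts. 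Thus $\End(\CATI)_{\lax}^{\radj} \to \CatI^{\radj}$ is both a cocartesian and a cartesian fibration.

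For the left-hand functor I would run the same kind of check using the description of the cocartesian morphisms of $\Mnd(\CATI)_{\lax}$ from \cref{cor:Mndcocart} (the cocartesian lift over $\rho \colon X \to Y$ with left adjoint $\lambda$ taking the monad $T$ to $\rho T \lambda$ via the transformation induced by the counit $\lambda\rho \to \id$), or --- more efficiently --- transport the statement along the equivalence $\Mnd(\CATI)_{\lax} \simeq \Fun(\Delta^{1}, \CatI)_{\mndradj}$ of \cref{cor:mndadj}, under which the projection to $\CatI$ becomes evaluation at the target and the cocartesian lifts become composition squares, whose stability under composition is manifest. Either way $\Mnd(\CATI)_{\lax}^{\radj} \to \CatI^{\radj}$ is a cocartesian fibration. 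To see that the horizontal functor preserves cocartesian morphisms, I would note that a cocartesian morphism of $\Mnd(\CATI)_{\lax}^{\radj} \to \CatI^{\radj}$ is in particular locally cocartesian, hence by \cref{propn:EndMndloccoC}(ii) maps to a locally cocartesian morphism of $\End(\CATI)_{\lax}^{\radj} \to \CatI^{\radj}$; since the latter is now known to be a cocartesian fibration, its locally cocartesian morphisms are cocartesian, so the image is cocartesian. Finally, commutativity of the triangle is inherited by restriction along $\CatI^{\radj} \hookrightarrow \iota_{1}\CATI$ from the commuting triangle $\Mnd(\CATI)_{\lax} \to \End(\CATI)_{\lax} \to \iota_{1}\CATI$.

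I expect the only real work to be the closure-under-composition verification for $\End(\CATI)_{\lax}^{\radj} \to \CatI^{\radj}$; but its genuine content --- that pushing a monad, resp.\ an endofunctor, forward along a composite of right adjoints agrees with pushing it forward in two steps --- is exactly the pseudofunctoriality of passing to adjoints, \ie{} the coherence of composite adjunctions, so I do not anticipate a real obstacle.
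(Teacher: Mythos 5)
Your proposal is correct and follows essentially the same route as the paper's (much terser) proof: both establish the locally cocartesian structure from \cref{propn:EndMndloccoC} and \cref{cor:Mndcocart} and then invoke the criterion of \cite{HTT}*{Proposition~2.4.2.8}, with the closure of locally cocartesian morphisms under composition coming from the explicit $RPL$-description, i.e.\ the coherence of composite adjunctions; your extra details (the counit computation, and the observation that the horizontal functor preserves cocartesian morphisms because locally cocartesian morphisms in a cocartesian fibration are cocartesian) are exactly what the paper leaves implicit. One caveat about your parenthetical ``more efficient'' alternative for the left-hand functor: under the equivalence $\Mnd(\CATI)_{\lax} \simeq \Fun(\Delta^{1},\CatI)_{\mndradj}$ of \cref{cor:mndadj} the cocartesian lifts are \emph{not} plain composition squares, since a composite of a monadic right adjoint with a right adjoint need not be monadic; as in the proof of \cref{cor:Mndcocart}, one must compose and then apply the reflection onto monadic right adjoints, so your primary route via the explicit description is the one to use.
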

\begin{proof}
  We know that the two downward functors are locally cocartesian
  fibrations, and that the horizontal functor preserves locally
  cocartesian morphisms.  It then suffices by
  \cite{HTT}*{Proposition~2.4.2.8} to show that the locally
  cocartesian morphisms in $\End(\CATI)_{\lax}^{\radj}$ are closed
  under composition, which is clear from our description of these
  morphisms. Similarly, the right-hand functor is a cartesian
  fibration.
\end{proof}

\section{Lax Transformations and Icons}\label{sec:icon}
Recall that, as we discussed in \S\ref{subsec:itcat}, we can view
$\CatIT$ as a full subcategory of $\Fun(\Dop, \CatI)$. The
straightening--unstraightening equivalence identifies the latter \icat{}
with the \icat{} $\Cat_{\infty/\Dop}^{\txt{cocart}}$ of cocartesian
fibrations over $\Dop$ and functors over $\Dop$ that preserve
cocartesian morphisms. This \icat{} has a natural enhancement to an \itcat{}
where the 2-morphisms are natural transformations over $\Dop$ between
such functors, and this restricts to a notion of 2-morphism between
functors of \itcats{} that is quite different from the usual notion of
2-morphisms as natural transformations.\footnote{For \itcats{}
  $\mathcal{X}, \mathcal{Y}$ a natural transformation is a morphism of
  \itcats{} $\mathcal{X} \times C_{1} \to
\mathcal{Y}$, or equivalently a morphism in
$\FUN(\mathcal{X},\mathcal{Y})$.} For example, if we view
monoidal \icats{} $\mathcal{M}, \mathcal{M}'$ as
(pointed) \itcats{} with one object, then a (pointed) functor of
\itcats{} $\mathcal{M} \to \mathcal{M}'$ is a monoidal functor, but a
natural transformation in the usual sense between two such functors
$F,G \colon \mathcal{M} \to \mathcal{M}'$ amounts to specifying an
object $x \in \mathcal{M}'$ and a natural equivalence
\[ F(\blank) \otimes x \simeq x \otimes G(\blank), \] while our new
notion of 2-morphism gives precisely the monoidal natural
transformations.

Our goal in this section is to identify these ``new'' 2-morphisms with
certain colax natural transformations, namely those given at each
object by an equivalence. This amounts to an \icatl{} version of a
result of Lack~\cite{Lack:icons}, who refers to this class of colax
transformations as ``icons''.\footnote{An acronym for ``Identity
  Component Oplax Natural transformations''.}  In \S\ref{sec:mndalg} we will use this result to
identify two \icats{} of monads on a fixed \icat{}.

It is convenient to view our new 2-morphisms in terms of a certain
tensoring of $\CatIT$ over $\CatI$, which we will now define:
\begin{defn}
  The \icat{}
  $\Fun(\Dop, \CatI)$ is tensored over $\CatI$ by taking products with
  constant functors, \ie{} for $X \in \Fun(\Dop, \CatI)$ and
  $\mathcal{C} \in \CatI$ we can define $X \times \mathcal{C}$ as the
  functor $[n] \mapsto X_{n}\times \mathcal{C}$. This preserves
  colimits in both variables, so we have a cotensoring
  $X^{\mathcal{C}}$ (given by $[n] \mapsto \Fun(\mathcal{C}, X_{n})$)
  and an enrichment $\Nat(X, Y)$ in $\CatI$, satisfying
  \[ \Map_{\Fun(\Dop, \CatI)}(X \times \mathcal{C}, Y) \simeq
    \Map_{\Fun(\Dop, \CatI)}(X, Y^{\mathcal{C}}) \simeq
    \Map_{\CatI}(\mathcal{C}, \Nat(X, Y)).\]
\end{defn}

\begin{remark}\label{rmk:Natcoc}
  If we view $\Fun(\Dop, \CatI)$ as cocartesian fibrations to $\Dop$,
  then morphisms in $\Nat(X, Y)$ indeed correspond to natural
  transformations between functors over $\Dop$ that preserve
  cocartesian morphisms.
\end{remark}

If $\mathcal{X}$ and $\mathcal{Y}$ are
$(\infty,2)$-categories, we obtain an \icat{} $\Nat(\mathcal{X},
\mathcal{Y})$ whose objects are functors $\mathcal{X} \to
\mathcal{Y}$. The precise result we will prove in this section is the
 following description of this \icat{}:
\begin{thm}\label{thm:icon}
  Given $(\infty,2)$-categories $\mathcal{X}$ and $\mathcal{Y}$ there
  are functors
  \[ \Nat(\mathcal{X}, \mathcal{Y}) \to \Fun(\mathcal{X},
    \mathcal{Y})_{\colax},\]
  \[ \Nat(\mathcal{X}, \mathcal{Y})^{\op} \to \Fun(\mathcal{X},
    \mathcal{Y})_{\lax},\] where the first identifies
  $\Nat(\mathcal{X}, \mathcal{Y})$ with the subcategory of
  $\Fun(\mathcal{X}, \mathcal{Y})_{\colax}$ whose morphisms are the
  colax natural transformations that are given by equivalences in
  $\mathcal{Y}$, and similarly in the lax case. More precisely, we have natural pullback squares
  \[
    \begin{tikzcd}
      \Nat(\mathcal{X}, \mathcal{Y}) \arrow{r} \arrow{d} &
      \Fun(\mathcal{X}, \mathcal{Y})_{\colax} \arrow{d} \\
      \Map(\iota_{0}\mathcal{X}, \iota_{0}\mathcal{Y}) \arrow{r} &
      \Fun(\iota_{0}\mathcal{X}, \mathcal{Y}),
    \end{tikzcd} \quad
        \begin{tikzcd}
      \Nat(\mathcal{X}, \mathcal{Y})^{\op} \arrow{r} \arrow{d} &
      \Fun(\mathcal{X}, \mathcal{Y})_{\lax} \arrow{d} \\
      \Map(\iota_{0}\mathcal{X}, \iota_{0}\mathcal{Y}) \arrow{r} &
      \Fun(\iota_{0}\mathcal{X}, \mathcal{Y}),
    \end{tikzcd}
  \]
  where in the left-hand square the right vertical map is given by composition with the
  inclusion $\iota_{0}\mathcal{X} \to \mathcal{X}$ (via the
  equivalence $\Fun(\iota_{0}\mathcal{X}, \mathcal{Y})_{\colax} \simeq
  \Fun(\iota_{0}\mathcal{X},\mathcal{Y})$ of \cref{rmk:colaxigpd}) and
  the bottom horizontal map is given by composition with the inclusion $\iota_{0}\mathcal{Y}
 \to \mathcal{Y}$ (and the equivalence $\Fun(\iota_{0}\mathcal{X},
 \iota_{0}\mathcal{Y}) \simeq \Map(\iota_{0}\mathcal{X}, \iota_{0}\mathcal{Y})$),
  and similarly in the right-hand square.
\end{thm}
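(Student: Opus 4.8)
The plan is to pin down both $\infty$-categories explicitly, build the comparison functor from a single natural map of cosimplicial $(\infty,2)$-categories, and then check the square is cartesian by reducing to the generating cells $C_{0},C_{1},C_{2}$.

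First I would identify the objects. Writing $Y_{\bullet}$ for the simplicial $\infty$-category underlying $\mathcal{Y}$, the cotensor of $Y_{\bullet}$ by $[n]\in\CatI$ (in the tensoring of $\Fun(\Dop,\CatI)$ by constant diagrams) is $[m]\mapsto\Fun([n],Y_{m})$, and since $Y_{0}$ is an $\infty$-groupoid this is again an $(\infty,2)$-category $\mathcal{Y}^{[n]}$, namely the one with $\iota_{0}\mathcal{Y}^{[n]}=\iota_{0}\mathcal{Y}$ and hom-$\infty$-categories $\Fun([n],\mathcal{Y}(x,y))$. Hence $\Nat(\mathcal{X},\mathcal{Y})$ is the $\infty$-category whose $n$-simplices are $\Map_{\CatIT}(\mathcal{X},\mathcal{Y}^{[n]})$. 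Unravelling the internal homs on the other side gives that $\Fun(\mathcal{X},\mathcal{Y})_{\colax}$ has $n$-simplices $\Map_{\CatIT}(\mathcal{X},\FUN([n],\mathcal{Y})_{\lax})$, where $[n]$ is regarded as the $(\infty,2)$-category $[n](0,\ldots,0)$. Finally $\Fun(\iota_{0}\mathcal{X},\mathcal{Y})_{\colax}=\Fun(\iota_{0}\mathcal{X},\mathcal{Y})$ by \cref{rmk:colaxigpd}, and $\Fun(\iota_{0}\mathcal{X},\mathcal{Y})^{\simeq}\simeq\Map(\iota_{0}\mathcal{X},\iota_{0}\mathcal{Y})$ (an invertible natural transformation between functors out of a groupoid is a coherent family of equivalences, i.e.\ a path of objects), so the bottom map of the square is the maximal-subgroupoid inclusion, and the claimed pullback is exactly the wide subcategory of colax transformations whose restriction to $\iota_{0}\mathcal{X}$ is invertible, i.e.\ whose components are all equivalences. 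Thus the theorem amounts to identifying $\Nat(\mathcal{X},\mathcal{Y})$ with this subcategory.

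To produce the functor I would construct, for each $[n]$ and natural in $[n]$, a functor $\mathcal{Y}^{[n]}\to\FUN([n],\mathcal{Y})_{\lax}$ which on objects sends $x$ to the constant diagram on $\id_{x}$ and on hom-$\infty$-categories is the evident equivalence $\Fun([n],\mathcal{Y}(x,y))\xrightarrow{\sim}\FUN([n],\mathcal{Y})_{\lax}(\id_{x},\id_{y})$ read off from the definition of lax transformations; applying $\Map_{\CatIT}(\mathcal{X},-)$ levelwise yields the natural map $\Nat(\mathcal{X},\mathcal{Y})\to\Fun(\mathcal{X},\mathcal{Y})_{\colax}$, whose induced transformations have identity (hence invertible) components, and together with the restriction $\Nat(\mathcal{X},\mathcal{Y})\to\Nat(\iota_{0}\mathcal{X},\mathcal{Y})\simeq\Map(\iota_{0}\mathcal{X},\iota_{0}\mathcal{Y})$ this gives the commuting square. (Matching the source/target maps of $\mathcal{Y}^{[\bullet]}$ with the correct evaluation maps of $\FUN([\bullet],\mathcal{Y})_{\lax}$ is what forces the $\op$ in the lax square.) To show the square is cartesian I would note that all four corners send colimits of $(\infty,2)$-categories to limits: for $\Fun(\mathcal{X},\mathcal{Y})_{\colax}$ because $\otimes^{\colax}$ preserves colimits in each variable (\cref{ass:gray}); for $\Nat(\mathcal{X},\mathcal{Y})$ because $(-)\times\mathcal{C}$ on $\Fun(\Dop,\CatI)$ preserves colimits and is compatible with the reflection onto $\CatIT$ (as the cotensor $\mathcal{Y}^{\mathcal{C}}$ lies in $\CatIT$); and for $\Fun(\iota_{0}\mathcal{X},\mathcal{Y})$ and $\Map(\iota_{0}\mathcal{X},\iota_{0}\mathcal{Y})$ because $\iota_{0}$ preserves the density colimit $\mathcal{X}\simeq\colim_{\bTt_{/\mathcal{X}}}[k](m_{1},\ldots,m_{k})$ (both sides being the value of the presheaf $\mathcal{X}$ at $C_{0}$), and this colimit together with the Segal decompositions $[k](m_{1},\ldots,m_{k})\simeq[1](m_{1})\amalg_{C_{0}}\cdots\amalg_{C_{0}}[1](m_{k})$ and $[1](m)\simeq C_{2}\amalg_{C_{1}}\cdots\amalg_{C_{1}}C_{2}$ reduces everything to $C_{0},C_{1},C_{2}$. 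So it suffices to check the square there.

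For $\mathcal{X}=C_{0}$ one has $\Nat(C_{0},\mathcal{Y})=\iota_{0}\mathcal{Y}$ (since $[0]$ is terminal in $\Dop$, so $\lim_{\Dop}Y_{\bullet}=Y_{0}$), and the square is the trivial one $\iota_{0}\mathcal{Y}=\iota_{1}\mathcal{Y}\times_{\iota_{1}\mathcal{Y}}\iota_{0}\mathcal{Y}$. For $\mathcal{X}=C_{1}$, $\Map_{\CatIT}(C_{1},-)$ picks out arrows, so $\Nat(C_{1},\mathcal{Y})\simeq Y_{1}$, the $\infty$-category of $1$-morphisms of $\mathcal{Y}$ (a morphism $f\to f'$ being a pair of equivalences on sources and on targets together with a $2$-morphism), and the comparison amounts to the observation that a colax square whose two $1$-morphism components are equivalences is, after absorbing them, exactly such a morphism of $Y_{1}$. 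For $\mathcal{X}=C_{2}$ one gets $\Nat(C_{2},\mathcal{Y})\simeq\Fun([1],Y_{1})$, and the same unravelling using the explicit description of diagrams of shape $C_{2}\otimes^{\colax}C_{1}$ from the Examples of \S\ref{sec:gray} — impose that the two $1$-morphism components be equivalences, contract them — leaves precisely a morphism of $\Fun([1],Y_{1})$. The lax statement follows by applying the colax one to $(\mathcal{X}^{\twop},\mathcal{Y}^{\twop})$ via \cref{cor:FUNlax2op}, using $\iota_{0}\mathcal{X}^{\twop}=\iota_{0}\mathcal{X}$, $\iota_{0}\mathcal{Y}^{\twop}=\iota_{0}\mathcal{Y}$ and the natural equivalence $\Nat(\mathcal{X}^{\twop},\mathcal{Y}^{\twop})\simeq\Nat(\mathcal{X},\mathcal{Y})^{\op}$ coming from $Y_{m}^{\op}\times[n]\simeq(Y_{m}\times[n]^{\op})^{\op}$ and $[n]^{\op}\cong[n]$. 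I expect the main obstacle to be the $C_{2}$-case — carefully contracting the two equivalences in a $C_{2}\otimes^{\colax}C_{1}$-diagram and matching the resulting coherence data with a morphism of $\Fun([1],Y_{1})$ — together with verifying that the cosimplicial comparison map $\mathcal{Y}^{[\bullet]}\to\FUN([\bullet],\mathcal{Y})_{\lax}$ is coherently defined and compatible with the simplicial structure.
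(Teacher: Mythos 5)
Your outline follows the same basic architecture as the paper's proof: describe $\Nat(\mathcal{X},\mathcal{Y})$ via the cotensoring $\mathcal{Y}^{[n]}$ (the paper first verifies, as you assert, that this cotensor is again an $(\infty,2)$-category), produce a natural comparison between the Gray tensor product and this (co)tensoring, deduce the commutative square, and verify cartesianness by reducing in the $\mathcal{X}$-variable to the cells $C_{0},C_{1},C_{2}$; the paper packages this reduction as the pushout square of \cref{propn:odotpo}, checked on generators via \cref{lem:graytenscolim}, and then applies $\Map(\blank,\mathcal{Y})$ and the Yoneda lemma, which is equivalent in content to your "all four corners send the density colimit to limits" argument. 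Deducing the lax case from the colax one via $(\blank)^{\twop}$ is also exactly how the paper proceeds.

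The genuine gap is the construction of the comparison map itself. You propose to define the cosimplicial functor $\mathcal{Y}^{[\bullet]}\to\FUN([\bullet],\mathcal{Y})_{\lax}$ by specifying it on objects and on hom-$\infty$-categories and calling the rest evident; but a functor of $(\infty,2)$-categories is not determined by such data --- one must supply all compositional coherences --- and naturality in $[n]$ is itself coherent data rather than a property to be checked afterwards. You flag this at the end as an expected obstacle, but it is precisely the technical heart of the theorem, and your proposal contains no device for resolving it. The paper's solution is to work on the representable side with strict (indeed gaunt) 2-categories: it introduces the explicit strict model $\Phi(I,[m])=[k]([n_{1}]\times[m],\ldots,[n_{k}]\times[m])$, proves $I\odot[m]\simeq\Phi(I,[m])$ (\cref{propn:Phieq}), writes down a strict and strictly natural 2-functor $\nu_{I,m}\colon I\otimes^{\colax}[m]\to\Phi(I,[m])$ using the explicit description of the Gray tensor product in \cref{defn:GrayTh2}, and only then extends by colimits; some such construction on representables would be needed to make your comparison map precise. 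Secondary, fixable points: your verifications at $C_{1}$ and $C_{2}$ ("absorb/contract the equivalences") are heuristics where the paper performs actual colimit computations, and $[0]$ is initial, not terminal, in $\Dop$ (though the conclusion $\lim_{\Dop}Y_{\bullet}\simeq Y_{0}$, and hence your $C_{0}$ case, is still correct).
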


To prove \cref{thm:icon}, we start with the following observation:
\begin{propn}
  If $X$ lies in one of the subcategories $\Cat_{(\infty,2)}$,
  $\Seg^{\mathcal{S}}_{\Dop}(\CatI)$ or $\Seg_{\Dop}(\CatI)$, then so
  does $X^{\mathcal{C}}$ for any $\mathcal{C} \in \CatI$. 
\end{propn}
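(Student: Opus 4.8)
The plan is to check the three defining conditions directly at the level of the simplicial $\infty$-category $X^{\mathcal{C}}$, which is computed levelwise by $(X^{\mathcal{C}})_{n} = \Fun(\mathcal{C}, X_{n})$. The only inputs needed are that $\Fun(\mathcal{C}, \blank) \colon \CatI \to \CatI$, being right adjoint to $\blank \times \mathcal{C}$, preserves all limits, and that it sends \igpds{} to \igpds{} --- since any functor into an \igpd{} inverts all morphisms, $\Fun(\mathcal{C}, \mathcal{G}) \simeq \Fun(\| \mathcal{C} \|, \mathcal{G}) = \Map(\| \mathcal{C} \|, \mathcal{G})$ for $\mathcal{G} \in \mathcal{S}$, i.e.\ $\mathcal{S}$ is an exponential ideal in $\CatI$. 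Granting this, the Segal condition is immediate: the Segal map of $X^{\mathcal{C}}$ at $[n]$ is obtained by applying $\Fun(\mathcal{C}, \blank)$ to the Segal map of $X$ at $[n]$ (both are induced by the same inert maps $[0], [1] \to [n]$, and $\Fun(\mathcal{C}, \blank)$ carries $X_{1} \times_{X_{0}} \dotsb \times_{X_{0}} X_{1}$ to the corresponding iterated fibre product of the $\Fun(\mathcal{C}, X_{i})$), hence is an equivalence; so $X^{\mathcal{C}} \in \Seg_{\Dop}(\CatI)$ when $X$ is. If in addition $X_{0} \in \mathcal{S}$ then $(X^{\mathcal{C}})_{0} = \Fun(\mathcal{C}, X_{0}) \in \mathcal{S}$, which handles $\Seg_{\Dop}^{\mathcal{S}}(\CatI)$.

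The substantive case is $\CatIT$, where for $X$ complete I must show that the Segal space $W := (X^{\mathcal{C}})^{\simeq}$, given by $[n] \mapsto \Fun(\mathcal{C}, X_{n})^{\simeq}$ (a Segal space, since $X^{\mathcal{C}}$ is Segal and $(\blank)^{\simeq}$ preserves finite limits), is complete. I would use the reformulation that a Segal space $W$ is complete iff the face map $d_{0} \colon W_{1}^{\txt{eq}} \to W_{0}$ out of the subspace of equivalences is an equivalence (immediate from Rezk's definition, as $d_{0} s_{0} = \id$). Write $X_{1}^{\txt{eq}} \hookrightarrow X_{1}$ for the full subcategory on the invertible $1$-morphisms of $\mathcal{X}$; it is an \igpd{} (a $2$-morphism between equivalences is invertible), and $X$ complete means exactly that $d_{0} \colon X_{1}^{\txt{eq}} \to X_{0}$ is an equivalence. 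Since composition and identities in $W$ are computed objectwise in $\mathcal{X}$, a functor $F \colon \mathcal{C} \to X_{1}$ is invertible in $W$ iff $F(c)$ is invertible in $\mathcal{X}$ for every object $c$ --- inverses are essentially unique and so assemble into a functor automatically --- equivalently iff $F$ factors through the fully faithful $X_{1}^{\txt{eq}} \hookrightarrow X_{1}$. Hence $W_{1}^{\txt{eq}} \simeq \Fun(\mathcal{C}, X_{1}^{\txt{eq}})^{\simeq} = \Map(\mathcal{C}, X_{1}^{\txt{eq}})$, and under this identification the face map $d_{0}$ for $W$ is $\Map(\mathcal{C}, \blank)$ applied to $d_{0} \colon X_{1}^{\txt{eq}} \to X_{0}$. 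As the latter is an equivalence, so is its image under $\Map(\mathcal{C}, \blank)$, whence $W$ is complete and $X^{\mathcal{C}} \in \CatIT$.

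Everything except one point is routine bookkeeping (limit-preservation of $\Fun(\mathcal{C}, \blank)$, the objectwise description of composition in $(X^{\mathcal{C}})^{\simeq}$, and the standard characterization of completeness via $d_{0}$). The step that I expect needs a little care is the identification of the equivalences in $(X^{\mathcal{C}})^{\simeq}$: it is cleanest to work with the \emph{full subcategory} $X_{1}^{\txt{eq}} \hookrightarrow X_{1}$ rather than just a subspace of objects of $X_{1}$, so that ``$F$ lands in the equivalences'' becomes literally ``$F$ factors through a fully faithful inclusion'', after which compatibility with $\Map(\mathcal{C}, \blank)$ is formal.
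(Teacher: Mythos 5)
The levelwise Segal and $\infty$-groupoid checks in your first paragraph are correct and coincide with the paper's argument, but the completeness step contains a genuine error, concentrated in your treatment of $X_1^{\txt{eq}}$. First, the parenthetical claim that a $2$-morphism between equivalences is invertible is false, so the full subcategory $X_1^{\txt{eq}} \subseteq X_1$ need not be an \igpd{}, and completeness of $X$ does \emph{not} say that $d_0 \colon X_1^{\txt{eq}} \to X_0$ is an equivalence of \icats{}: it only says that the maximal sub-\igpd{} $(X_1^{\txt{eq}})^{\simeq}$ (the space of equivalences of the Segal space $X^{\simeq}$) maps by an equivalence to $X_0$. Second, and more seriously, your characterization of the equivalences of $W=(X^{\mathcal{C}})^{\simeq}$ is wrong in the ``if'' direction: objectwise invertibility of $F \colon \mathcal{C} \to X_1$ does not make $F$ invertible in $W$. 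The objectwise inverses do not ``assemble automatically'': to define an inverse you must do something with the morphism components $F(\gamma)$, and conjugating a non-invertible $2$-cell by the objectwise inverses produces data of the opposite variance (a functor out of $\mathcal{C}^{\op}$), exactly the lax/colax mate phenomenon of \cref{propn:laxfunadj}. A concrete counterexample: let $\mathcal{X}$ be the (gaunt, hence complete) $2$-category with one object, only the identity $1$-morphism, and endomorphism monoid $\mathbb{N}$ of $2$-morphisms, and take $\mathcal{C}=\Delta^1$. Then $X_0 \simeq *$, $X_1 \simeq B\mathbb{N}$, and $W$ is the Segal space of the $1$-category $B\mathbb{N}$: every morphism of $W$ has both object components equal to the invertible $1$-morphism $\id$, yet only $0 \in \mathbb{N}$ is an equivalence in $W$. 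Here $W$ is in fact complete (with $W_1^{\txt{eq}} \simeq * \simeq W_0$), but your identifications give $W_1^{\txt{eq}} \simeq \Map(\Delta^1, X_1^{\txt{eq}}) \simeq \mathbb{N}$ and ask $d_0 \colon X_1^{\txt{eq}} = B\mathbb{N} \to X_0 = *$ to be an equivalence, both of which fail; so the argument collapses precisely at its key step.

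The statement you actually need is that an equivalence in $W$ has invertible \emph{object and morphism} components, i.e.\ $F$ factors through the sub-\igpd{} $(X_1^{\txt{eq}})^{\simeq}$; granting that, $W_1^{\txt{eq}} \simeq \Map(\mathcal{C}, (X_1^{\txt{eq}})^{\simeq}) \simeq \Map(\|\mathcal{C}\|, X_0) \simeq W_0$ by completeness of $X$, and your strategy would go through. But proving this characterization (both that equivalences in $W$ force the $2$-cell components to be invertible, and that these conditions suffice) is the real content, and it is exactly what your proof omits. The paper sidesteps a general analysis: since $X^{(\blank)}$ takes colimits of \icats{} to limits and complete Segal spaces are closed under limits, it reduces to $\mathcal{C} = \Delta^0, \Delta^1$, and for $\Delta^1$ it observes by hand that an equivalence in $(X^{\Delta^1})^{\simeq}$ must have both $1$-morphisms and the $2$-morphism invertible. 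Either repair — the reduction to $\Delta^1$, or a proof of the corrected characterization for general $\mathcal{C}$ — is needed before your completeness argument is valid.
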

\begin{proof}
  If $X \in \Seg_{\Dop}(\CatI)$, then the Segal map
  \[ X_{n}^{\mathcal{C}} \to X_{1}^{\mathcal{C}}
    \times_{X_{0}^{\mathcal{C}}} \cdots \times_{X_{0}^{\mathcal{C}}}
    X_{1}^{\mathcal{C}}\] is just $\Fun(\mathcal{C}, \blank)$ applied
  to the Segal map for $X$, and so is an equivalence since
  $\Fun(\mathcal{C}, \blank)$ preserves limits. Moreover, for
  $X \in \Seg_{\Dop}^{\mathcal{S}}(\CatI)$ the
  \icat{}
  \[X^{\mathcal{C}}_{0} \simeq \Fun(\mathcal{C}, X_{0}) \simeq
    \Map_{\mathcal{S}}(\|\mathcal{C}\|, X_{0}) \] is an
  $\infty$-groupoid.

  Now suppose $X \in \Cat_{(\infty,2)}$; we must show that the Segal
  space $(X^{\mathcal{C}})^{\simeq} \simeq \Map_{\CatI}(\mathcal{C}, X)$
  is complete. Since complete Segal spaces are closed under limits, it
  suffices to consider the cases where  $\mathcal{C}$ is $\Delta^{0}$
  (where we just get $X^{\Delta^{0}} \simeq X$)
  and $\Delta^{1}$. Here $(X^{\Delta^{1}})^{\simeq}_{0} \simeq X_{0}$ and
  $(X^{\Delta^{1}})^{\simeq}_{1} \simeq \Map(\Delta^{1}, X_{1})$. A
  morphism in $X^{\Delta^{1}}$ is thus a 2-morphism in $X$, \ie{} a
  diagram of shape
  \[
    \begin{tikzcd}
      \bullet 
      \arrow[d, bend left=40, ""{name=B,inner sep=1pt,left}]
      \arrow[d, bend right=40, ""{name=A,inner sep=1pt,right}]      
      \\
      \bullet
      \arrow[from=A,to=B,Rightarrow]
    \end{tikzcd}
  \]
  in $X$, with composition in $X^{\Delta^{1}}$ given by composing two
  such diagrams vertically. For such a morphism to be an equivalence
  it follows that both the two 1-morphisms and the 2-morphism must be
  an equivalence in $X$, which proves completeness.
\end{proof}

\begin{remark}\label{rmk:str2cattens}
  If $\mathbf{X}$ is a strict 2-category, we can explicitly
  describe $\mathbf{X}^{[n]}$, which is again a strict 2-category:
  \begin{itemize}
  \item the objects are the objects of $\mathbf{X}$
  \item a morphism from $x$ to $y$ consists of $n+1$ morphisms $f_{i}
    \colon x \to y$ ($i=0,\ldots,n$), and 2-morphisms $f_{0} \to f_{1} \to \cdots \to
    f_{n}$, \ie{} a functor $[n] \to \mathbf{X}(x,y)$,
  \item a 2-morphism between two morphisms from $x$ to $y$, given by
    $f_{0} \to \cdots \to f_{n}$ and $g_{0} \to \cdots \to g_{n}$,
    consists of a commutative diagram of 2-morphisms of the form
    \[
      \begin{tikzcd}
        f_{0} \arrow{r} \arrow{d} & f_{1} \arrow{r} \arrow{d} & \cdots
        \arrow{r} & f_{n} \arrow{d} \\
        g_{0} \arrow{r} & g_{1} \arrow{r} & \cdots \arrow{r} & g_{n},
      \end{tikzcd}
    \]
    \ie{} a functor $[n] \times [1] \to \mathbf{X}(x,y)$,
  \item composition of morphisms and 2-morphisms is given in terms of
    composition in $\mathbf{X}$ in the evident way.
  \end{itemize}
\end{remark}

\begin{cor}
  There exists a functor
  \[ \blank\odot\blank \colon \Cat_{(\infty,2)} \times \CatI \to
    \Cat_{(\infty,2)},\]
  which preserves colimits in each variable, and satisfies
  \[ \Map_{\Cat_{(\infty,2)}}(\mathcal{X} \odot \mathcal{C},
    \mathcal{Y}) \simeq \Map_{\Cat_{(\infty,2)}}(\mathcal{X},
    \mathcal{Y}^{\mathcal{C}}) \simeq \Map_{\CatI}(\mathcal{C},
    \Nat(\mathcal{X}, \mathcal{Y}))\]
  for all \itcats{} $\mathcal{X}$ and $\mathcal{Y}$ and \icats{} $\mathcal{C}$.\qed
\end{cor}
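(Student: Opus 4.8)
The plan is to realise $\odot$ as the localisation onto $\Cat_{(\infty,2)}$ of the tensoring of $\Fun(\Dop, \CatI)$ over $\CatI$ introduced above, and then to read off all three equivalences from adjunctions already in play. The fully faithful inclusion $\iota \colon \Cat_{(\infty,2)} \hookrightarrow \Fun(\Dop, \CatI)$ is an accessible localisation (this is standard, and follows for instance by transporting the presentation of $\Cat_{(\infty,2)}$ as complete $2$-fold Segal spaces through the equivalences of \S\ref{subsec:itcat}); in particular it admits a left adjoint $L$, and $\Cat_{(\infty,2)}$ is presentable. I would then \emph{define}
\[ \mathcal{X} \odot \mathcal{C} \;:=\; L\bigl(\iota\mathcal{X} \times \mathcal{C}\bigr), \]
where $(\blank)\times(\blank) \colon \Fun(\Dop,\CatI) \times \CatI \to \Fun(\Dop,\CatI)$ is the tensoring from the definition above, \ie{} $[n] \mapsto \mathcal{X}_{n} \times \mathcal{C}$ with $\mathcal{C}$ constant. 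Since $L$ and this tensoring are honest functors, the formula manifestly produces a functor $\Cat_{(\infty,2)} \times \CatI \to \Cat_{(\infty,2)}$.

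All three equivalences are then obtained by following $\Map_{\Fun(\Dop,\CatI)}(\iota\mathcal{X} \times \mathcal{C}, \iota\mathcal{Y})$ around. Since $\iota\mathcal{Y}$ is $L$-local, the $(L \dashv \iota)$-adjunction rewrites it as $\Map_{\Cat_{(\infty,2)}}(\mathcal{X} \odot \mathcal{C}, \mathcal{Y})$. The cotensoring adjunction on $\Fun(\Dop,\CatI)$ rewrites it as $\Map_{\Fun(\Dop,\CatI)}(\iota\mathcal{X}, (\iota\mathcal{Y})^{\mathcal{C}})$; by the preceding Proposition $\mathcal{Y}^{\mathcal{C}}$ again lies in $\Cat_{(\infty,2)}$, so full faithfulness of $\iota$ gives $\Map_{\Cat_{(\infty,2)}}(\mathcal{X}, \mathcal{Y}^{\mathcal{C}})$. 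Finally the defining property of the $\CatI$-enrichment $\Nat$ on $\Fun(\Dop,\CatI)$ rewrites it as $\Map_{\CatI}(\mathcal{C}, \Nat(\iota\mathcal{X}, \iota\mathcal{Y}))$, which is what is denoted $\Map_{\CatI}(\mathcal{C}, \Nat(\mathcal{X},\mathcal{Y}))$ here. Each of these identifications comes from an adjunction unit or counit, and so is natural in $\mathcal{X}$, $\mathcal{Y}$ and $\mathcal{C}$.

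It remains to check that $\odot$ preserves colimits in each variable. In the first variable this is immediate, since the equivalence $\Map_{\Cat_{(\infty,2)}}(\mathcal{X}\odot\mathcal{C}, \mathcal{Y}) \simeq \Map_{\Cat_{(\infty,2)}}(\mathcal{X}, \mathcal{Y}^{\mathcal{C}})$ exhibits $(\blank)\odot\mathcal{C}$ as a left adjoint. In the second variable, the equivalence $\Map_{\Cat_{(\infty,2)}}(\mathcal{X}\odot\mathcal{C}, \mathcal{Y}) \simeq \Map_{\CatI}(\mathcal{C}, \Nat(\mathcal{X},\mathcal{Y}))$, natural in $\mathcal{C}$ and $\mathcal{Y}$, turns a colimit $\mathcal{C} \simeq \colim_{i}\mathcal{C}_{i}$ in $\CatI$ into $\Map_{\Cat_{(\infty,2)}}(\mathcal{X}\odot\mathcal{C}, \mathcal{Y}) \simeq \lim_{i}\Map_{\Cat_{(\infty,2)}}(\mathcal{X}\odot\mathcal{C}_{i}, \mathcal{Y})$, so $\mathcal{X}\odot(\blank)$ preserves colimits by the Yoneda lemma. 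I expect no real obstacle: the one genuine input is the preceding Proposition (that $\mathcal{Y}^{\mathcal{C}}$ stays complete, Segal, and has a groupoid of objects), and everything else is formal; the only point needing mild care — and it is routine — is bookkeeping the naturality of the three equivalences well enough that $\odot$ is genuinely a bifunctor with the stated universal property rather than merely a family of one-variable left adjoints, which here is automatic from the explicit formula $\mathcal{X}\odot\mathcal{C} = L(\iota\mathcal{X}\times\mathcal{C})$.
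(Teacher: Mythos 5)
Your proposal is correct and matches the argument the paper intends (the corollary is stated with an immediate \qed): one defines $\mathcal{X}\odot\mathcal{C}$ by reflecting the tensoring $\iota\mathcal{X}\times\mathcal{C}$ of $\Fun(\Dop,\CatI)$ back into $\Cat_{(\infty,2)}$, and the three equivalences and colimit-preservation follow formally from the localization, cotensoring, and enrichment adjunctions together with the preceding proposition that $\mathcal{Y}^{\mathcal{C}}$ remains an $(\infty,2)$-category. Your bookkeeping of naturality and the Yoneda argument for colimits in the $\CatI$-variable are exactly the routine verifications being suppressed.
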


\begin{remark}\label{rmk:odotop}
  If we view an \itcat{} $\mathcal{X}$ as an object of $\Fun(\Dop,
  \CatI)$, then $\mathcal{X}^{\twop}$ is obtained by taking opposite
  \icats{} levelwise. For any \icat{} $\mathcal{C}$ we therefore have 
  a natural equivalence
  \[ (\mathcal{X}^{\mathcal{C}})^{\twop} \simeq
    (\mathcal{X}^{\twop})^{\mathcal{C}^{\op}},\]
  given levelwise by the equivalence
  \[\Fun(\mathcal{C},\mathcal{X}_{i})^{\op} \simeq
    \Fun(\mathcal{C}^{\op},\mathcal{X}_{i}^{\op}).\]
  This translates into a natural equivalence
  \[ (\mathcal{X} \odot \mathcal{C})^{\twop} \simeq
    \mathcal{X}^{\twop} \odot \mathcal{C}^{\op}.\]
\end{remark}

The morphisms in Theorem~\ref{thm:icon} will arise from
natural transformations
\[ \mathcal{X} \otimes^{\colax} \mathcal{C} \to \mathcal{X} \odot
  \mathcal{C},\]
\[ \mathcal{X} \otimes^{\lax} \mathcal{C} \to \mathcal{X} \odot
  \mathcal{C}^{\op}.\]
In order to define these we require an explicit description of $I \odot [n]$ for
$I \in \bTt$ and $[n] \in \simp$. To obtain this we first define an explicit functor
$\Phi \colon \bTt \times \simp \to \CatIT$ (in fact taking values in
strict 2-categories) and check that this satisfies the co-Segal
condition in each variable. Then we will use the universal property of
$\odot$ to define a natural transformation $\odot \to \Phi$ and prove
that this is an equivalence.

\begin{defn}\label{defn:Phi}
  We define $\Phi$ on objects by
  \[ \Phi( [k](n_1,\ldots,n_k) , [m]) := [k]([n_{1}] \times
    [m],\ldots,[n_{k}] \times [m]),\]
  in the notation of Definition~\ref{defn:[n]free}. This has as objects $0,\ldots, k$ and as Hom-categories ($0 \leq i \leq 
j \leq k$) the posets
\[
\Hom(i,j) :=  \prod_{i < s \leq j} \left([n_s] \times [m]\right) \cong I(i,j)
\times [m]^{j-i},
\]
with composition given by isomorphism. In particular, we can view this
as a strict 2-category. Recall that if $I = [k](n_1,\ldots,n_k)$ and
$J= [l](n'_{1},\ldots,n'_{l})$ then a morphism $F \colon I \to J$ in $\bTt$ is
given by a morphism $\phi \colon [k] \to [l]$ in $\simp$ together with
morphisms $\psi_{ij} \colon [n_{i}] \to [n'_{j}]$ in $\simp$ whenever
$\phi(i-1)<j \leq \phi(i)$. If we are also given a morphism
$\mu \colon [m] \to [m']$, then the corresponding functor
\[ \Phi(F,\mu) \colon \Phi(I,[m]) \to 
  \Phi(J, [m']) \]
is given on objects by $i \mapsto \phi(i)$ and on morphism categories
by the functor
\[ \prod_{i < s \leq j} [n_{s}] \times [m] \to \prod_{\phi(i)<t \leq
    \phi(j)}  [n'_{t}] \times [m'] \]
that in the component indexed by $t$ is given by 
\[ \prod_{i < s \leq j} [n_{s}]  \times [m] \to  [n_{r}] \times [m]
  \xto{\psi_{rt} \times \mu} [n'_{t}] \times [m'],\]
where $r$ is the unique index such that $\phi(r-1)< t \leq
\phi(r)$. In other words, this is the functor
\[ I(i,j) \times [m]^{j-i} \to J(\phi(i),\phi(j)) \times
  [m']^{\phi(j)-\phi(i)} \]
given by $F(i,j)$ in the first variable and in the second by
\[
  \begin{split}
  \mu_{i,j} \colon \prod_{i < s \leq j}
  [m] \to \prod_{i < s \leq j} [m]^{\phi(s)-\phi(s-1)} \xto{\prod_{i
      < s \leq j} \mu^{\phi(s)-\phi(s-1)}} & \prod_{i < s \leq j}
  [m']^{\phi(s)-\phi(s-1)} \\ & \cong 
   \prod_{i < s \leq j} \prod_{\phi(s-1)<t\leq \phi(s)} [m'] \\ & \cong
   \prod_{\phi(i)<t \leq \phi(j)} [m'],     
  \end{split}
\]
 where the first functor is given by the diagonals of $[m]$.
\end{defn}

\begin{ex}
  For $\Phi(C_1, [1]) = 
  \Phi([1](0), [1])$ we get $[1](1) = C_{2}$, and for
  $\Phi(C_{2},[1])$ we get $[1]([1] \times [1])$, which we can think
  of as a ``suspension''
  of the commutative square: it has two objects $0,1$ and a commutative
  square of morphisms from $0$ to $1$.
\end{ex}

\begin{lemma}
  $\Phi$ satisfies the co-Segal condition in each variable.
\end{lemma}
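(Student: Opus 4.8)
The plan is to unwind the two co-Segal conditions into explicit colimit statements and then reduce each one to an elementary computation of colimits in $\CatI$. Dualizing the two Segal conditions defining $\Seg_{\bTt^{\op}}(\mathcal{S})$, the co-Segal condition for $\Phi$ in the $\bTt$-variable (with $[m]$ fixed) says that
\[
  \Phi([1](n_1),[m]) \amalg_{C_0} \cdots \amalg_{C_0} \Phi([1](n_k),[m]) \longrightarrow \Phi([k](n_1,\ldots,n_k),[m])
\]
is an equivalence for every $[k](n_1,\ldots,n_k)\in\bTt$, and that
\[
  \Phi(C_2,[m]) \amalg_{\Phi(C_1,[m])} \cdots \amalg_{\Phi(C_1,[m])} \Phi(C_2,[m]) \longrightarrow \Phi([1](n),[m])
\]
(with $n$ copies of $\Phi(C_2,[m])$, glued along the $n-1$ internal vertices) is an equivalence for every $[1](n)$; while the co-Segal condition in the $\simp$-variable (with $I$ fixed) says that $\Phi(I,[1]) \amalg_{\Phi(I,[0])} \cdots \amalg_{\Phi(I,[0])} \Phi(I,[1])$ ($p$ copies) maps by an equivalence to $\Phi(I,[p])$. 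The first of these is immediate: unwinding \cref{defn:Phi} it is exactly \cref{lem:[n]Segal} applied to the \icats{} $\mathcal{C}_s := [n_s]\times[m]$, since $\Phi([1](n_s),[m]) = [1]([n_s]\times[m])$ and $\Phi(C_0,[m]) = C_0$.

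For the remaining two conditions the key point is that the morphisms occurring in them are images under $\Phi$ of \emph{free} maps: those in the second $\bTt$-condition are $\Phi(\blank,[m])$ of \emph{inert} morphisms of $\bTt$, and those in the $\simp$-condition are $\Phi(I,\blank)$ of (arbitrary) morphisms of $\simp$. In each of these two cases one reads off from \cref{defn:Phi} that the functor in question is $\txt{Free}$ (in the sense of \cref{defn:[n]free}) applied to a morphism of $\CatI$-graphs: indeed $\Phi([k](n_1,\ldots,n_k),[m])$ is the free \itcat{} on the graph $[k]([n_1]\times[m],\ldots,[n_k]\times[m])_{\txt{graph}}$, and for fixed $I = [k](n_1,\ldots,n_k)$ the functor $\Phi(I,\blank)$ is $\txt{Free}$ of the functor $\simp\to\txt{Graph}(\CatI)$ sending $[m]$ to that graph. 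Since $\txt{Free}\colon\txt{Graph}(\CatI)\to\CatIT$ is a left adjoint and hence preserves colimits, both remaining conditions reduce to the analogous statements in $\txt{Graph}(\CatI)$.

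Next I would observe that colimits in $\txt{Graph}(\CatI) = \Fun^{\mathcal{S}}(\simp^{\txt{el},\op}, \CatI)$ are computed levelwise: the inclusion $\mathcal{S}\hookrightarrow\CatI$ has a right adjoint (namely $(\blank)^{\simeq}$), hence preserves colimits, so $\mathcal{S}$ is closed under colimits in $\CatI$ and therefore $\Fun^{\mathcal{S}}(\simp^{\txt{el},\op}, \CatI)$ is closed under colimits in $\Fun(\simp^{\txt{el},\op}, \CatI)$. All the graph morphisms in question are the identity on the (discrete) vertex level, so only the edge level needs checking. The edge object of $[k]([n_1]\times[m],\ldots,[n_k]\times[m])_{\txt{graph}}$ is $\coprod_{s=1}^{k}([n_s]\times[m])$ and the gluings respect this decomposition over $s$, so the two reductions amount to the identities
\[
  [n]\times[m] \simeq ([1]\times[m])\amalg_{[m]}\cdots\amalg_{[m]}([1]\times[m]) \qquad (n\text{ copies}),
\]
\[
  [n_s]\times[p] \simeq ([n_s]\times[1])\amalg_{[n_s]}\cdots\amalg_{[n_s]}([n_s]\times[1]) \qquad (p\text{ copies}),
\]
which follow from the spine decomposition $[r]\simeq[1]\amalg_{[0]}\cdots\amalg_{[0]}[1]$ in $\CatI$ together with the fact that $\CatI$ is cartesian closed, so that $\blank\times[m]$ and $[n_s]\times\blank$ preserve colimits.

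The one point that requires care — and which I expect to be the main obstacle in writing this up cleanly — is the factorization through $\txt{Free}$: it is \emph{not} true that $\Phi$ globally factors through $\txt{Graph}(\CatI)$, since a general morphism of $\bTt$ may collapse a cell to an identity, which has no counterpart at the level of graphs. One must therefore invoke the factorization only in the two restricted situations above (inert morphisms when the $\bTt$-coordinate varies, arbitrary morphisms when it is fixed), where it does hold and can be checked directly from the formulas in \cref{defn:Phi}. Granting that, the rest is routine bookkeeping with colimits in $\CatI$.
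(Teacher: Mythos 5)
Your proof is correct, but for two of the three conditions it takes a different route from the paper's. The paper also handles the first $\bTt$-condition by \cref{lem:[n]Segal}, but for the second $\bTt$-condition and the $\simp$-condition it invokes \cref{lem:[1]wccolim}: since $[n]\times[m]\simeq([1]\times[m])\amalg_{[m]}\cdots\amalg_{[m]}([1]\times[m])$ is a weakly contractible colimit in $\CatI$, the functor $[1](\blank)$ carries this spine decomposition to the required decomposition of $\Phi([1](n),[m])$, and the $\simp$-condition is proved first for $\Phi([1](n),\blank)$ in the same way and then propagated to general $[k]$ via the first decomposition. You instead run the mechanism behind the proof of \cref{lem:[n]Segal} uniformly: on the relevant subdiagrams (inert maps over $\id_{[1]}$ in the $\bTt$-variable, arbitrary maps of $\simp$ with $I$ fixed) the functor $\Phi$ is the free functor applied to a diagram of $\CatI$-graphs, the free functor is a left adjoint, and colimits of graphs are computed levelwise, which reduces everything to the same spine-times-simplex identities the paper uses. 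This buys a uniform treatment of all $[k]$ at once and avoids \cref{lem:[1]wccolim}, at the price of the factorization caveat you rightly flag. Two small points to make explicit in a write-up: the paper's $\txt{Free}$ is a left adjoint into $\Seg^{\mathcal{S}}_{\Dop}(\CatI)$, so to conclude for colimits in $\CatIT$ you should compose with the localization and observe that the free objects on the graphs appearing here (including the colimit graph) are already complete, so nothing changes; and your reduction ``only the edge level needs checking'' implicitly uses that the zigzag indexing the iterated pushout is weakly contractible, so the constant vertex-level diagram has colimit the vertex set itself. Neither is a gap, just bookkeeping at the same level of detail as the paper.
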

\begin{proof}
  Lemma~\ref{lem:[n]Segal} immediately gives the first co-Segal condition
  in the $\bTt$-variable:
  \[
    \begin{split}
      \Phi([k](n_{1},\ldots,n_{k}), [m]) & \simeq [k]([n_{1}] \times [m],
    \ldots, [n_{k}] \times [m]) \\ & \simeq [1]([n_{1}]\times [m])
    \amalg_{[0]} \cdots \amalg_{[0]} [1]([n_{k}]\times [m]) \\ & \simeq
    \Phi([1](n_{1}), [m]) \amalg_{\Phi([0], [m])} \cdots
    \amalg_{\Phi([0], [m])} \Phi([1](n_{k}), [m]).
    \end{split}
  \]
  Moreover, Lemma~\ref{lem:[1]wccolim} gives the other co-Segal
  condition in the $\bTt$-variable:
  \[
    \begin{split}
    \Phi([1](n), [m]) & \simeq [1]([n] \times [m]) \simeq [1]([1] \times
    [m]) \amalg_{[1]([0] \times [m])} \cdots \amalg_{[1]([0] \times
      [m])} [1]([1] \times [m]) \\
    & \simeq [1]([1] \times [m]) \amalg_{[1](m)} \cdots
    \amalg_{[1](m)} [1]([1] \times [m])\\
    & \simeq \Phi(C_{2}, [m]) \amalg_{\Phi(C_{1},[m])} \cdots
    \amalg_{\Phi(C_{1}, [m])} \Phi(C_{2}, [m]).
    \end{split}
  \]
  Similarly, Lemma~\ref{lem:[1]wccolim} gives the co-Segal condition in
  the $\simp$-variable for $\Phi([1](n), \blank)$ from which the
  general case follows using the first decomposition.
\end{proof}

\begin{construction}
We will now define a natural transformation
\[
\eta_{I,m} \colon I \odot [m] \to \Phi(I, [m])
\]
(for $I \in \bbTheta_2$ and $[m] \in \simp$).
Since $I \odot [m]$ was defined by adjunction, to give this map
is equivalent to giving
\[
\eta'_{I,m} \colon I \to \Phi(I,[m])^{[m]}
\]
where the latter is cotensoring with $[m]$. Applying the description
of \cref{rmk:str2cattens} to
$\Phi(I, [m])^{[m]}$ when $I=[k](n_{1},\ldots,n_{k})$, we see that
\begin{itemize}
\item its objects are $0,\ldots,k$,
\item a morphism from $i$ to $j$ is a functor
  \[ [m] \to [m]^{j-i} \times I(i,j), \]
  (where $I(i,j) = \prod_{i < s \leq j} [n_s]$ if $i < j$ and $\emptyset$ if $i > j$)
\item a 2-morphism of functors $i$ to $j$ is a functor
  \[ [m] \times [1] \to [m]^{j-i} \times I(i,j). \]
\end{itemize}
In other word, $\Phi(I,[m])^{[m]}(i,j)$ is the functor category
$\Fun([m], \Phi(I,[m])(i,j))$.
The functor $\eta'_{I,m}$ is defined to be the identity on objects,
and for $i \leq j$ the functor of morphism categories
\[ I(i,j) \to \Phi(I,[m])^{[m]}(i,j) = \Fun([m], [m]^{j-i} \times
  I(i,j)) \]
is defined to be the one adjoint to the functor
\[ [m] \times I(i,j) \xto{\eta''_{I,m}(i,j)} [m]^{j-i} \times I(i,j) \]
given by the product of the diagonal $[m] \to [m]^{j-i}$ and the
identity of $I(i,j)$. Since the composition in $\Phi(I,[m])^{[m]}$ is
defined in terms of composition in $I$, it is evident that this
defines a functor of strict 2-categories. To show that $\eta$ is
natural, we must check that for $F \colon I \to J$ and $\alpha \colon
[m] \to [k]$ the square
\[
  \begin{tikzcd}
    I \odot [m] \arrow{r}{\eta_{I,m}} \arrow{d}{F \odot \alpha} &
    \Phi(I,[m]) \arrow{d}{\Phi(F,\alpha)} \\
    J \odot [k] \arrow{r}{\eta_{J,k}} & \Phi(J,[k])
  \end{tikzcd}
\]
commutes, which is equivalent to the commutativity of the diagram
\[
  \begin{tikzcd}
    I \arrow{dd}[swap]{F} \arrow{r}{\eta'_{I,m}}& \Phi(I,[m])^{[m]}
    \arrow{dr}{\Phi(F,\alpha)^{[m]}} \\
    & & \Phi(J,[k])^{[m]}. \\
    J \arrow{r}{\eta'_{J,k}} & \Phi(J,[k])^{[k]} \arrow{ur}[swap]{\Phi(J,[k])^{\alpha}}
  \end{tikzcd}
\]
This in turn amounts to the commutativity of the squares
\[
  \begin{tikzcd}[column sep=large]
    I(i,j) \times [m] \arrow{d}{F(i,j) \times \alpha}
    \arrow{r}{\eta''_{I,m}(i,j)} & I(i,j) \times [m]^{j-i}
    \arrow{d}{F(i,j) \times \alpha_{i,j}} \\
    J(Fi,Fj) \times [k] \arrow{r}{\eta''_{J,k}(Fi,Fj)} & J(Fi,Fj)
    \times [k]^{Fj-Fi},
  \end{tikzcd}
  \]
where $\alpha_{i,j}$ is defined as at the end of \cref{defn:Phi},
which is clear from the definitions of these functors.
\end{construction}

\begin{propn}\label{propn:Phieq}
  The natural $2$-functor
  \[ \eta_{I,m} \colon I \odot \Delta^m \longrightarrow
    \Phi(I,\Delta^m) \]
  is an equivalence.
\end{propn}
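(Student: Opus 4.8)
The statement asserts an equivalence between two functors of $(I,\Delta^m)$ that are, modulo the colimit relations imposed by the co-Segal condition, determined by their values on a few generators of $\bTt$; so the natural strategy is to reduce to those generators and check the equivalence there by a mapping-space computation.

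\emph{Reduction in the $\bTt$-variable.} The functor $\blank\odot\blank$ preserves colimits in each variable, and by the preceding lemma $\Phi$ satisfies the co-Segal condition in each variable. Applying \cref{lem:[n]Segal} to $[k]([n_1],\ldots,[n_k])$, both $\bigl([k]([n_1],\ldots,[n_k])\bigr)\odot\Delta^m$ and $\Phi([k](n_1,\ldots,n_k),\Delta^m)$ are therefore exhibited as the iterated pushout of the $[1]([n_s])\odot\Delta^m$ (resp.\ $\Phi([1](n_s),\Delta^m)$) along copies of $[0]\odot\Delta^m$ (resp.\ $\Phi([0],\Delta^m)$), glued along the maps induced by the inert maps $[0],[1]\to[k]$. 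Since $\eta$ was constructed to be natural in $I$, $\eta_{[k](n_1,\ldots,n_k),m}$ is exactly the map induced on these two pushout diagrams by the $\eta_{[1](n_s),m}$ and $\eta_{[0],m}$, so it is enough to treat $I=[0]$ and $I=[1](n)$. For $I=[0]$ this is immediate: $\Phi([0],\Delta^m)=[0]()=\pt$, while by the $\odot$-adjunction $\Map_{\CatIT}([0]\odot\Delta^m,\mathcal{Y})\simeq(\mathcal{Y}^{\Delta^m})_0\simeq\Fun(\Delta^m,Y_0)\simeq Y_0$ (for $\mathcal{Y}$ corresponding to the simplicial \icat{} $Y_\bullet$; here $Y_0$ is an \igpd{} and $\Delta^m$ weakly contractible), so $[0]\odot\Delta^m\simeq\pt$ and $\eta_{[0],m}$ is an equivalence.

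\emph{The case $I=[1](n)$.} By \cref{defn:Phi} we have $\Phi([1](n),\Delta^m)=[1](\Delta^n\times\Delta^m)$, so it suffices to show that $\eta$ induces an equivalence $[1](\mathcal{C})\odot\Delta^m\isoto[1](\mathcal{C}\times\Delta^m)$ for every \icat{} $\mathcal{C}$. Fix an \itcat{} $\mathcal{Y}$ with associated simplicial \icat{} $Y_\bullet$. By the $\odot$-adjunction, $\Map([1](\mathcal{C})\odot\Delta^m,\mathcal{Y})\simeq\Map_{\CatIT}([1](\mathcal{C}),\mathcal{Y}^{\Delta^m})$, and since $[1](\mathcal{C})$ is a free \itcat{} (\cref{defn:[n]free}) the fibre of this over $(x,y)\in((\mathcal{Y}^{\Delta^m})_0)^{\times2}\simeq Y_0^{\times2}$ is $\Map_{\CatI}\bigl(\mathcal{C},(\mathcal{Y}^{\Delta^m})(x,y)\bigr)$. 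Using $(\mathcal{Y}^{\Delta^m})_\bullet=\Fun(\Delta^m,Y_\bullet)$ together with the fact that $\Fun(\Delta^m,\blank)$ preserves pullbacks and $\Fun(\Delta^m,Y_0)\simeq Y_0$, one identifies $(\mathcal{Y}^{\Delta^m})(x,y)\simeq\Fun(\Delta^m,\mathcal{Y}(x,y))$, so this fibre is
\[ \Map_{\CatI}\bigl(\mathcal{C},\Fun(\Delta^m,\mathcal{Y}(x,y))\bigr)\simeq\Map_{\CatI}(\mathcal{C}\times\Delta^m,\mathcal{Y}(x,y)). \]
On the other hand, the same free-\itcat{} universal property identifies $\Map_{\CatI}(\mathcal{C}\times\Delta^m,\mathcal{Y}(x,y))$ with the corresponding fibre of $\Map([1](\mathcal{C}\times\Delta^m),\mathcal{Y})$ over $(x,y)$. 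Unwinding the construction of $\eta$ — whose component $\eta'_{[1](n),m}$ restricted to the morphism category $[1](n)(0,1)=\Delta^n$ is, under the $\odot\dashv(\blank)^{\Delta^m}$ adjunction, adjoint to the identity of $\mathcal{C}\times\Delta^m$ — shows that $\eta_{[1](n),m}$ realizes precisely this chain of natural equivalences. Hence $\eta_{[1](n),m}$ is an equivalence, and with the reduction above the proposition follows.

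The genuinely non-formal step is the last one: producing \emph{some} equivalence $[1](\mathcal{C})\odot\Delta^m\simeq[1](\mathcal{C}\times\Delta^m)$ is easy, but one must verify that it is \emph{this} map $\eta$ and not another equivalence, which requires carefully threading the explicit $\eta'_{I,m}\colon I\to\Phi(I,\Delta^m)^{\Delta^m}$ through the $\odot\dashv(\blank)^{\Delta^m}$ adjunction and the description of $\mathcal{Y}^{\Delta^m}$ from \cref{rmk:str2cattens} and \cref{defn:Phi}. Secondary, routine points are the identification $(\mathcal{Y}^{\Delta^m})(x,y)\simeq\Fun(\Delta^m,\mathcal{Y}(x,y))$ and the observation $[0]\odot\Delta^m\simeq\pt$ used in the reduction.
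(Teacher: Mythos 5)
Your proposal is correct, but it takes a somewhat different route from the paper. The paper reduces in \emph{both} variables, using the co-Segal property of $\odot$ and $\Phi$ to boil everything down to the generators $C_{0},C_{1},C_{2}\in\bTt$ and $[0],[1]\in\simp$, leaving exactly two nontrivial checks: $C_{1}\odot[1]\simeq C_{2}$ (maps out of it classify 2-morphisms) and $C_{2}\odot[1]\simeq[1]([1]\times[1])$, the latter via \cref{lem:[1]wcrep} and weak contractibility of $[1]\times[1]$. You instead reduce only in the $\bTt$-variable via \cref{lem:[n]Segal} and then treat $[1](n)\odot\Delta^{m}$ for all $n,m$ uniformly, proving $[1](\mathcal{C})\odot\Delta^{m}\simeq[1](\mathcal{C}\times\Delta^{m})$ by a fibrewise mapping-space computation: the cotensor adjunction, the identification of fibres of $\Map([1](\mathcal{C}),\blank)$ over pairs of objects from \cref{defn:[n]free}, the equivalences $(\mathcal{Y}^{\Delta^{m}})_{0}\simeq Y_{0}$ and $(\mathcal{Y}^{\Delta^{m}})(x,y)\simeq\Fun(\Delta^{m},\mathcal{Y}(x,y))$, and the observation that composition with $\eta$ induces precisely the currying equivalence on fibres (since $\eta'_{[1](n),m}$ on the morphism category is adjoint to the identity of $[n]\times[m]$). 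This is sound: your base identification $\Fun(\Delta^{m},Y_{0})\simeq Y_{0}$ plays the role that \cref{lem:[1]wcrep} plays in the paper, and your uniform treatment subsumes the paper's two special cases, at the cost of a slightly longer unwinding; the paper's version buys a shorter check by pushing the reduction further. Two points you should make explicit but which are routine: that composition with $\eta$ is compatible with the projections to $Y_{0}^{\times 2}$ and $((\mathcal{Y}^{\Delta^{m}})_{0})^{\times 2}$ (objects go to constant functors, and one compares fibres over corresponding points), and that all your equivalences are natural in $\mathcal{Y}$ so the Yoneda argument applies. You also correctly isolate the genuinely non-formal step, namely checking that the induced fibre map is the currying equivalence rather than merely that the two fibres are abstractly equivalent; this is the same concern the paper addresses when it verifies that the adjoint maps are $\eta'_{C_{1},1}$ and $\eta'_{C_{2},1}$.
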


\begin{proof}[Proof of Proposition~\ref{propn:Phieq}]
  Since both $\odot$ and $\Phi$ are co-Segal, it is enough to establish the
  equivalence on generators $C_{i} \in \bTt$, $i =0,1,2$ and $[j] \in
  \simp$, $j = 0,1$. There are two non-trivial cases: it suffices to
  prove that the maps
  \[ \eta_{C_{1},1} \colon C_{1} \odot [1] \to \Phi(C_{1},[1]) \cong
    [1](1) = C_{2},\]
  \[ \eta_{C_{2},1} \colon C_{2} \odot [1] \to \Phi(C_{2},[1]) \cong
    [1]([1]\times [1]),\]
  are equivalences.

  For the first case, note that by adjunction we
  have (for any $2$-category $\mathcal{X}$)
  \[
  \Map(C_1 \odot [1], \mathcal{X}) \simeq \Map(C_1, 
  \mathcal{X}^{[1]}).\]  These are the $1$-morphisms in 
  $\mathcal{X}^{[1]}$, and if we think of $\mathcal{X}$ as an
  object of $\Fun(\Dop, \CatI)$ these are the objects of $\mathcal{X}_1^{[1]}$,
  which are the $2$-morphisms in $\mathcal{X}$. Thus $C_1 \odot
  [1]  \simeq C_2$, and the functor $C_{1} \to C_{2}^{[1]}$
  adjoint to the equivalence is the one picking out the non-trivial
  2-morphism in $C_{2}$, which is indeed $\eta'_{C_{1},1}$.  
  
  For the second case, we have
  \[\Map(C_2 \odot [1], \mathcal{X}) \simeq
  \Map(C_2, \mathcal{X}^{[1]}) \simeq \Map([1], 
  \mathcal{X}_1^{[1]}) \simeq \Map([1] \times [1],
  \mathcal{X}_1).\]  Since $[1] \times [1]$ is weakly
  contractible, by Lemma~\ref{lem:[1]wcrep} this
  is the same thing as $\Map([1]([1] \times [1]), \mathcal{X})$. Thus
  $C_{2} \odot [1] \simeq [1]([1] \times [1])$, and unwinding the
  definitions we see that the map \[C_{2} \to [1]([1] \times
  [1])^{[1]}\] adjoint to the equivalence is the one picking out the
  non-trivial 2-morphism in $[1]([1] \times
  [1])^{[1]}$ (corresponding to the commutative square of non-trivial 2-morphisms
  in $[1]([1] \times [1])$), which is indeed $\eta'_{C_{2},1}$.
\end{proof}

\begin{construction}
  We now define a natural functor of strict 2-categories
  \[ \nu_{I,m} \colon I \otimes^{\colax} [m] \to \Phi(I,[m]), \]
  for $I \in \bTt$, using the explicit description of $I
  \otimes^{\colax} [m]$ as a strict 2-category from \cref{defn:GrayTh2}.
  Let $I= [k](n_1,\ldots,n_k)$. On objects, the map $\nu_{I,m}$ is simply the 
  projection $\obj([k])\times \obj([m]) \to \obj([k])$. On hom categories, for fixed objects
  $(x,i)$ and $(x',i')$, we need to give a map of posets
  \[
    \operatorname{MaxCh}([x,x'] \times [i,i']) \times \!
    I(x,x') \to 
    [m]^{x'-x} \times I(x,x').\]
  We define this map to send a maximal chain in $[x,x'] \times [i,i']$
  to the tuple of column indices where the vertical steps are taken.
  By unwinding the definitions involved we see that this indeed
  defines a natural functor of 2-categories.  
\end{construction}

\begin{propn}\label{propn:odotpo}
  There is a natural pushout square of \itcats{}
  \[
    \begin{tikzcd}
      \iota_{0} \mathcal{X} \times \mathcal{C} \arrow{r} \arrow{d} &
      \mathcal{X} \otimes^{\colax} \mathcal{C} \arrow{d} \\
      \iota_{0} \mathcal{X} \times \|\mathcal{C}\| \arrow{r} & \mathcal{X} \odot \mathcal{C}.
    \end{tikzcd}
  \]
\end{propn}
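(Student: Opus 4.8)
The plan is to mimic the proof of \cref{propn:laxtocart}(ii): reduce the statement to the generating cells by colimit-preservation, and then verify those cases by hand from the explicit descriptions in \cref{defn:GrayTh2} and \cref{defn:Phi}. First I would fix $\mathcal{X}$ and view the square as a functor of $\mathcal{C} \in \CatI$. All four corners preserve colimits in $\mathcal{C}$: the functor $\mathcal{C} \mapsto \mathcal{X} \otimes^{\colax} \mathcal{C}$ by \cref{ass:gray}, $\mathcal{C} \mapsto \mathcal{X} \odot \mathcal{C}$ by the construction of $\odot$, and $\mathcal{C} \mapsto \iota_{0}\mathcal{X} \times \mathcal{C}$, $\mathcal{C} \mapsto \iota_{0}\mathcal{X} \times \|\mathcal{C}\|$ because $\|\blank\|$, the inclusions $\mathcal{S} \hookrightarrow \CatI \hookrightarrow \CatIT$, and $S \times \blank$ for a fixed \igpd{} $S$ are all left adjoints. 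Since pushout squares form a colimit-closed subcategory of $\Fun([1]{\times}[1], \CatIT)$, it suffices to treat $\mathcal{C} = [0]$ and $\mathcal{C} = [1]$.

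The case $\mathcal{C} = [0]$ is immediate: $\|[0]\| \simeq [0]$, the left vertical map is the identity, and the right vertical map is the equivalence $\mathcal{X} \otimes^{\colax} [0] \simeq \mathcal{X} \simeq \mathcal{X} \odot [0]$ (using $\Phi(I,[0]) = I$ and that $\nu_{I,0}$ is the identity), so the square is trivially a pushout. For $\mathcal{C} = [1]$ we have $\|[1]\| \simeq [0]$, and the content becomes that $\mathcal{X} \odot [1]$ is the pushout of $\iota_{0}\mathcal{X} \leftarrow \iota_{0}\mathcal{X} \times [1] \to \mathcal{X} \otimes^{\colax} [1]$. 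Here I would reduce in the $\mathcal{X}$-variable to $\mathcal{X} \in \{C_{0},C_{1},C_{2}\}$, which generate $\CatIT$ under colimits. To make this legitimate one works with Segal objects rather than in $\CatIT$ directly: there the Segal-level analogues of $\otimes^{\colax}$ and $\odot$ preserve colimits in each variable, and $\iota_{0}$ sends a Segal object to the constant presheaf on its value at $C_{0}$, which is colimit-preserving because Segal-localization does not alter the value at $C_{0}$; moreover a square of Segal objects that is a pushout there and whose terminal vertex is a complete $2$-fold Segal space is automatically a pushout in $\CatIT$.

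It then remains to check, for $i = 0,1,2$, that the square for $\mathcal{X} = C_{i}$, $\mathcal{C} = [1]$ is a pushout. For this I would use \cref{propn:Phieq} to identify $C_{i} \odot [1]$ with $\Phi(C_{i},[1])$ and the explicit $2$-functor $\nu_{C_{i},1}$ constructed above, and then observe that $\Phi(C_{i},[1])$ is exactly the strict $2$-category obtained from $C_{i} \otimes^{\colax} [1]$ by collapsing to identities the morphisms in the $[1]$-direction lying over the objects of $C_{i}$: for $i=0$ this collapses $C_{0} \otimes^{\colax} [1] = [1]$ to $C_{0}$; for $i=1$ it collapses the colax square $C_{1} \otimes^{\colax} [1]$ along its two vertical edges to $C_{2} = \Phi(C_{1},[1])$; and for $i=2$ it collapses the cylinder $C_{2} \otimes^{\colax} [1]$ along its two vertical side-squares to the suspended square $[1]([1]{\times}[1]) = \Phi(C_{2},[1])$. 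Each of these is a routine check of the relevant presheaves against \cref{defn:GrayTh2} and \cref{defn:Phi} — one matches the surviving hom-posets with products of copies of $[1]$ — and since all four objects are complete $2$-fold Segal spaces the square is then a pushout in $\CatIT$.

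The hard part will be this last cell-by-cell verification, in particular the case $\mathcal{X} = C_{2}$: although entirely elementary, it requires carefully tracking the maximal chains (shuffles) that make up the hom-posets of the cylinder $C_{2} \otimes^{\colax} [1]$ and checking that, after the two vertical side-squares are collapsed, the surviving hom-poset is precisely the commutative square $[1] \times [1]$ together with its four $2$-cells. An alternative, more conceptual route would be to show directly that $\nu \colon \mathcal{X} \otimes^{\colax} \mathcal{C} \to \mathcal{X} \odot \mathcal{C}$ exhibits $\mathcal{X} \odot \mathcal{C}$ as the localization of $\mathcal{X} \otimes^{\colax} \mathcal{C}$ at the wide subcategory $\iota_{0}\mathcal{X} \times \mathcal{C}$ and then invoke the standard presentation of a localization at a wide subcategory as a pushout, together with $\|\iota_{0}\mathcal{X} \times \mathcal{C}\| \simeq \iota_{0}\mathcal{X} \times \|\mathcal{C}\|$ — but identifying $\nu$ as such a localization is itself most easily reduced to the cells.
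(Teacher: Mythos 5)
Your overall strategy coincides with the paper's: construct the square on cells via $\nu_{I,m}$, extend by colimits at the level of $\Seg_{\bTt^{\op}}(\mathcal{S})$ (where $\iota_{0}$, $\otimes^{\colax}$ and $\odot$ interact well with colimits), and reduce the pushout claim to $I\in\{C_{0},C_{1},C_{2}\}$ and $[m]\in\{[0],[1]\}$; the $\mathcal{C}=[0]$ case and the transfer to $\CatIT$ by locality are also handled as in the paper. The gap is in how you settle the two nontrivial cell cases. Identifying $\Phi(C_{i},[1])$ with the strict $2$-category obtained from $C_{i}\otimes^{\colax}[1]$ by collapsing the edges in the $[1]$-direction does not establish the universal property of the pushout, and your closing inference --- ``since all four objects are complete $2$-fold Segal spaces the square is then a pushout in $\CatIT$'' --- is not valid as stated: completeness of the corners lets you transfer a pushout from $\Fun(\bTt^{\op},\mathcal{S})$ or $\Seg_{\bTt^{\op}}(\mathcal{S})$ down to $\CatIT$ (as in \cref{propn:laxtocart}), but it does not produce one. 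Moreover, unlike in \cref{propn:laxtocart}, here the square is genuinely \emph{not} a pushout of $\bTt$-presheaves, so a ``routine check of the relevant presheaves'' fails: for $\mathcal{X}=C_{1}$, $\mathcal{C}=[1]$, evaluating at $C_{1}$ gives a pushout of sets $2\leftarrow 6\to 10$ with $6$ elements, whereas $C_{2}$ has only $4$ one-morphisms --- the two diagonal composites in the colax square are identified with the horizontal edges only after Segal localization.

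So the cell-level verification has to be carried out in $\Seg_{\bTt^{\op}}(\mathcal{S})$, and this is exactly what \cref{lem:graytenscolim} supplies in the paper's proof: the decompositions $C_{1}\otimes^{\colax}C_{1}\simeq [2](0,0)\cup_{C_{1}}C_{2}\cup_{C_{1}}[2](0,0)$ and $C_{2}\otimes^{\colax}C_{1}\simeq [2](1,0)\cup_{C_{2}}[1]([1]^{2})\cup_{C_{2}}[2](0,1)$ are presheaf-level colimits, and the pushout is then computed from the Segal-level identifications $[2](0,0)\cup_{C_{1}}C_{0}\simeq C_{1}$ and $[2](1,0)\cup_{C_{1}}C_{0}\simeq C_{2}$ (similarly for $[2](0,1)$), which give $C_{2}$ and $[1]([1]^{2})$ directly, with no shuffle bookkeeping. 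If you want to avoid \cref{lem:graytenscolim}, you must instead verify the universal property directly, e.g.\ by showing that for every Segal object $\mathcal{Z}$ the induced square of mapping spaces is cartesian --- but computing maps out of $C_{i}\otimes^{\colax}[1]$ again requires such a decomposition, not a comparison of hom-posets. Your alternative ``localization at a wide subcategory'' route has the same issue, since, as you note, identifying $\nu$ as that localization reduces to the same cell computations.
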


\begin{proof}
  From the definition of $\nu_{I,m}$ we have for $I \in \bTt$ and $[m]
  \in \simp$ a natural commutative square
  \[
    \begin{tikzcd}
      \iota_{0}I \times [m] \arrow{r} \arrow{d} & I \otimes^{\colax}
      [m] \arrow{d}{\nu_{I,m}} \\
      \iota_{0}I \arrow{r} & \Phi(I,[m]).
    \end{tikzcd}
  \]
  We can extend this by colimits to a commutative square in
  $\Seg_{\bTt^{\op}}(\mathcal{S})$ of the correct form for
  $\mathcal{X} \in \Seg_{\bTt^{\op}}(\mathcal{S})$ and
  $\mathcal{C} \in \Seg_{\Dop}(\mathcal{S})$, which induces the
  required square in $\CatIT$ for $\mathcal{X} \in \CatIT$,
  $\mathcal{C} \in \CatI$ after completion.

  To see that this is a pushout square in $\CatIT$ it suffices to
  check the original square is a pushout in
  $\Seg_{\bTt^{\op}}(\mathcal{S})$ for all $I \in \bTt$ and
  $[m] \in \simp$, and since the functors satisfy the co-Segal
  condition in each variable to this it is enough to check the cases
  where $I = C_{0},C_{1},C_{2}$ and $m = 0,1$.  The cases involving
  $C_0$ and $[0]$ are trivial, so we are left with two non-trivial cases:
  $\Phi(C_1,[1]) \cong C_{2}$ and $\Phi(C_{2},[1]) \cong [1]([1]
  \times [1])$. Using the colimit decomposition of
  \cref{lem:graytenscolim} the square in the first case is
  \[
    \begin{tikzcd}
      C_{1} \amalg C_{1} \arrow{d} \arrow{r} & {[2]}(0,0) \cup_{C_{1}}
      C_{2} \cup_{C_{1}} [2](0,0) \arrow{d} \\
      C_{0} \amalg C_{0} \arrow{r} & C_{2},
    \end{tikzcd}
  \]
  which is a pushout since we have
  \[ [2](0,0) \cup_{C_{1}} C_{0} \simeq (C_{1} \cup_{C_{0}} C_{1})
    \cup_{C_{1}} C_{0} \simeq C_{1}.\]
  In the second case the square is
  \[
    \begin{tikzcd}
      C_{1} \amalg C_{1} \arrow{d} \arrow{r} & {[2]}(1,0) \cup_{C_{2}}
    [1]([1]^{2}) \cup_{C_{2}} {[2]}(0,1) \arrow{d} \\
      C_{0} \amalg C_{0} \arrow{r} & {[1]}([1]^{2}),
    \end{tikzcd}
  \]
  which is a pushout since we have
  \[ [2](1,0) \cup_{C_{1}} C_{0} \simeq (C_{2} \cup_{C_{0}} C_{1})
    \cup_{C_{1}} C_{0} \simeq C_{2}\]
  and similarly for $[2](0,1)$.
\end{proof}

\begin{remark}\label{rmk:odotlaxpo}
  Combining \cref{propn:odotpo} with the equivalences of
  \cref{rmk:odotop} and \cref{propn:gray2op}, we also obtain (since
  the functor $(\blank)^{\twop}$ preserves colimits) natural pushout
  squares
  \[
    \begin{tikzcd}
      \iota_{0} \mathcal{X} \times \mathcal{C} \arrow{r} \arrow{d} &
      \mathcal{X} \otimes^{\lax} \mathcal{C} \arrow{d} \\
      \iota_{0} \mathcal{X} \times \|\mathcal{C}\| \arrow{r} & \mathcal{X} \odot \mathcal{C}^{\op}.
    \end{tikzcd}
  \]  
\end{remark}

The proof of Theorem~\ref{thm:icon} is now immediate:
\begin{proof}[Proof of \cref{thm:icon}]
  In the colax case the pushout square of \cref{propn:odotpo} gives
  natural pullback squares
  \[
    \begin{tikzcd}
    \Map(\mathcal{X} \odot \mathcal{C}, \mathcal{Y}) \arrow{r}
    \arrow{d} & \Map(\iota_{0}\mathcal{X} \times \|\mathcal{C}\|,
    \mathcal{Y}) \arrow{d} \\
    \Map(\mathcal{X} \otimes^{\colax} \mathcal{C}, \mathcal{Y})
    \arrow{r} & \Map(\iota_{0} \mathcal{X} \times \mathcal{C}, \mathcal{Y}),
    \end{tikzcd}
  \]
  which we can rewrite using various adjunctions as
    \[
    \begin{tikzcd}
    \Map(\mathcal{C}, \Nat(\mathcal{X},\mathcal{Y})) \arrow{r}
    \arrow{d} & \Map(\mathcal{C}, \Map(\mathcal{X}, \mathcal{Y})) \arrow{d} \\
    \Map(\mathcal{C}, \Fun(\mathcal{X},\mathcal{Y})_{\colax})
    \arrow{r} & \Map(\mathcal{C}, \Fun(\iota_{0} \mathcal{X}, \mathcal{Y})),
    \end{tikzcd}
  \]
  which gives the desired pullback square in $\CatI$ by the Yoneda
  Lemma. In the lax case we proceed in the same way, using instead the
  variant pushout square of \cref{rmk:odotlaxpo}.
\end{proof}

\section{Monads as Algebras}\label{sec:mndalg}
In \S\ref{sec:adj} we considered monads as functors of \itcats{} from
$\fmnd$. Alternatively, we can view monads in an \itcat{}
$\mathcal{X}$ as associative algebras in the monoidal \icats{} of
endomorphisms of the objects of $\mathcal{X}$; this is the point of
view taken by Lurie in \cite{HA}*{\S 4.7}. Our goal in this section is
to compare these two approaches, and in particular use the results of
\S\ref{sec:icon} to relate the natural notions of morphisms in the two
cases.

\begin{remark}\label{rmk:iconmndend}
  Applying Theorem~\ref{thm:icon} to monads and endofunctors, we get (as
$\iota_{0}\fmnd \simeq \iota_{0}\fend \simeq C_{0}$) pullback squares of \icats{}
\[
  \begin{tikzcd}
    \Nat(\fmnd, \mathcal{X})^{\op} \arrow{r} \arrow{d} &
    \Mnd(\mathcal{X})_{\lax} \arrow{d} \\
    \iota_{0}\mathcal{X} \arrow{r} & \iota_{1}\mathcal{X},
  \end{tikzcd}
  \qquad
  \begin{tikzcd}
    \Nat(\fmnd, \mathcal{X}) \arrow{r} \arrow{d} &
    \Mnd(\mathcal{X})_{\colax} \arrow{d} \\
    \iota_{0}\mathcal{X} \arrow{r} & \iota_{1}\mathcal{X},
  \end{tikzcd}
\]
\[
  \begin{tikzcd}
    \Nat(\fend, \mathcal{X})^{\op} \arrow{r} \arrow{d} &
    \End(\mathcal{X})_{\lax} \arrow{d} \\
    \iota_{0}\mathcal{X} \arrow{r} & \iota_{1}\mathcal{X},
  \end{tikzcd}
  \qquad
  \begin{tikzcd}
    \Nat(\fend, \mathcal{X}) \arrow{r} \arrow{d} &
    \End(\mathcal{X})_{\colax} \arrow{d} \\
    \iota_{0}\mathcal{X} \arrow{r} & \iota_{1}\mathcal{X}.
  \end{tikzcd}
\]
In particular, for an object $X \in \mathcal{X}$ we can identify the fibre
$\Mnd(\mathcal{X})_{\colax,X}$ with the fibre $\Nat(\fmnd,
\mathcal{X})_{X}$, and similarly in the other three cases. 
\end{remark}

To describe these fibres, we therefore want to give an explicit
description of the \icats{}
$\Nat(\fmnd, \mathcal{X})$ and $\Nat(\fend, \mathcal{X})$. By
Remark~\ref{rmk:Natcoc}, we can view $\Nat(\mathcal{X}, \mathcal{Y})$
in terms of the corresponding cocartesian fibrations
$\intDop\mathcal{X}$ and $\intDop\mathcal{Y}$ as the \icat{}
$\Fun_{/\Dop}^{\txt{cocart}}(\intDop \mathcal{X}, \intDop
\mathcal{Y})$ of functors over $\Dop$ that preserve cocartesian
morphisms and natural transformations between these. To describe this
in the case of interest we recall some notions related to generalized
non-symmetric \iopds{} (see for instance \cite{enr} for more details):
\begin{defn}
  Recall that a double \icat{} is a functor $\Dop \to \CatI$
  satisfying the Segal condition. (Thus an \itcat{} can be described
  as a double \icat{} whose value at $[0]$ is an $\infty$-groupoid.)
  Equivalently, a double \icat{} is a cocartesian fibration over $\Dop$
  corresponding to such a functor; we write $\txt{Dbl}_{\infty}$ for
  the \icat{} of double \icats{}. We also write
  $\txt{Opd}^{\txt{ns,gen}}_{\infty}$ for the \icat{} of generalized
  non-symmetric \iopds{}, which are also certain \icats{} over
  $\Dop$. In particular, a generalized non-symmetric \iopd{} has
  cocartesian morphisms over inert maps in $\Dop$, and the
  morphisms in $\txt{Opd}^{\txt{ns,gen}}_{\infty}$ are the morphisms
  over $\Dop$ that preserve these inert cocartesian morphisms. If
  $\mathcal{O}$ is a generalized non-symmetric \iopd{}, then so is
  $\mathcal{O} \times \mathcal{I}$ for any \icat{} $\mathcal{I}$, and
  we write $\Alg_{\mathcal{O}}(\mathcal{P})$ for the \icat{} given by
  the complete Segal space
  $\Map_{\txt{Opd}^{\txt{ns,gen}}_{\infty}}(\mathcal{O} \times
  \Delta^{\bullet}, \mathcal{P})$. (We abbreviate the \icat{}
  $\Alg_{\Dop}(\mathcal{P})$ of associative algebras to just $\Alg(\mathcal{P})$.)
\end{defn}

\begin{defn}
  Any
  double \icat{} is a generalized non-symmetric \iopd{}, so that there
  is a forgetful functor
  \[ \txt{Dbl}_{\infty} \to \txt{Opd}^{\txt{ns,gen}}_{\infty}.\]
  This has a left adjoint $\txt{Env}$, the \emph{double envelope},
  given by a simple explicit formula (see \cite{nmorita}*{\S
    A.8}):
  \[ \txt{Env}(\mathcal{O}) \simeq \mathcal{O} \times_{\Dop}
    \txt{Act}(\Dop) \] where $\txt{Act}(\Dop)$ is the full subcategory
  of $\Fun(C_{1}, \Dop)$ spanned by the active maps, the fibre product
  uses the map to $\Dop$ given by evaluation at $0 \in C_{1}$, and the
  map $\txt{Env}(\mathcal{O}) \to \Dop$ is given by evaluation at
  $1 \in C_{1}$.  From this formula we see that
  $\txt{Env}(\mathcal{O} \times \mathcal{I}) \simeq
  \txt{Env}(\mathcal{O}) \times \mathcal{I}$, so that if $\mathcal{M}$
  is a double \icat{} and $\mathcal{O}$ is a generalized non-symmetric
  \iopd{} then the adjunction induces an equivalence
  \[ \Alg_{\mathcal{O}}(\mathcal{M}) \simeq
    \Fun_{/\Dop}^{\txt{cocart}}(\txt{Env}(\mathcal{O}), \mathcal{M}).\]
\end{defn}
\begin{remark}\label{rmk:env}
  Note in particular that $\txt{Env}(\mathcal{O})_{0} \simeq
  \mathcal{O}_{0}$ (as the only active map to $[0]$ in $\Dop$ is
  $\id_{[0]}$) while $\txt{Env}(\mathcal{O})_{1} \simeq
  \mathcal{O}^{\txt{act}}$, the subcategory of $\mathcal{O}$
  containing only the active maps (as every object in $\Dop$ has a
  unique active map to $[1]$). Thus if $\mathcal{O}_{0} \simeq C_{0}$
  (\ie{} $\mathcal{O}$ is a non-symmetric \iopd{}) then
  $\txt{Env}(\mathcal{O})$ is a monoidal \icat{} given by a monoidal
  structure on $\mathcal{O}^{\txt{act}}$. If we think of objects of
  $\mathcal{O}$ as lists $(X_{1},\ldots, X_{n})$ of objects $X_{i} \in
  \mathcal{O}_{1}$, then this monoidal structure is given by
  concatenation,
  \[ (X_{1},\ldots,X_{n}) \otimes (Y_{1},\ldots,Y_{m}) \simeq
    (X_{1},\ldots,X_{n},Y_{1},\ldots,Y_{m}).\]
\end{remark}

\begin{propn}\label{propn:mndenv}
  There are equivalences $\intDop \fmnd \simeq \txt{Env}(\Dop)$ and $\intDop\fend
  \simeq \txt{Env}(\Dop_{\txt{int}})$, and hence for $\mathcal{X}$ an
  \itcat{} there are natural equivalences
\[ \Nat(\fmnd, \mathcal{X}) \simeq
  \Alg_{\Dop}(\intDop\mathcal{X}),\]
\[ \Nat(\fend, \mathcal{X}) \simeq
  \Alg_{\Dop_{\txt{int}}}(\intDop\mathcal{X}) \simeq
  \mathcal{X}_{1} \times_{\mathcal{X}_{0} \times \mathcal{X}_{0}}
  \mathcal{X}_{0},\]\
under which the functor $\Nat(\fmnd, \mathcal{X}) \to \Nat(\fend,
\mathcal{X})$ corresponds to that given by composition with
$\Dop_{\xint} \to \Dop$.
\end{propn}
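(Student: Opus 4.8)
The proposition breaks into two combinatorial identifications of cocartesian fibrations over $\Dop$ — $\intDop\fmnd\simeq\txt{Env}(\Dop)$ and $\intDop\fend\simeq\txt{Env}(\Dop_{\xint})$ — together with formal consequences. I would prove the two identifications first; everything else is then mechanical. Indeed, by \cref{rmk:Natcoc} (and the definition of $\Nat$ via the $\CatI$-tensoring) we have $\Nat(\mathcal{X},\mathcal{Y})\simeq\Fun_{/\Dop}^{\txt{cocart}}(\intDop\mathcal{X},\intDop\mathcal{Y})$, so substituting the two identifications and using the envelope adjunction equivalence $\Alg_{\mathcal{O}}(\mathcal{M})\simeq\Fun_{/\Dop}^{\txt{cocart}}(\txt{Env}(\mathcal{O}),\mathcal{M})$ with $\mathcal{M}=\intDop\mathcal{X}$ gives $\Nat(\fmnd,\mathcal{X})\simeq\Alg_{\Dop}(\intDop\mathcal{X})$ and $\Nat(\fend,\mathcal{X})\simeq\Alg_{\Dop_{\xint}}(\intDop\mathcal{X})$.

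To prove $\intDop\fmnd\simeq\txt{Env}(\Dop)$, note that both sides are cocartesian fibrations over $\Dop$ whose fibre over $[0]$ is contractible, i.e.\ each is the bar construction (delooping) of a monoidal \icat{}: $\intDop\fmnd$ of the monoidal \icat{} $\fmnd(-,-)$ — the hom-category of the walking monad, with composition as tensor product — and, by \cref{rmk:env}, $\txt{Env}(\Dop)=\txt{Act}(\Dop)$ of its degree-$1$ part $\Dop^{\txt{act}}$ with concatenation. Since $\fmnd$ is a strict $2$-category and $\Dop^{\txt{act}}$ a strict monoidal $1$-category, both sides arise as images of strict $\Dop$-indexed diagrams of ordinary categories, so it suffices to produce a strict monoidal isomorphism $\fmnd(-,-)\cong\Dop^{\txt{act}}$. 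This is precisely the classical statement that $\fmnd$ is the monoidal envelope of the non-symmetric associative operad: the hom-category $\fmnd(-,-)$, with the $2$-cells $\mu$, $\eta$ making the monad a monoid, is the free strict monoidal category on a monoid — the augmented simplex category $\Delta_+$ under ordinal addition — and a direct (contravariant) combinatorial comparison identifies this monoidally with $\Dop^{\txt{act}}$ under concatenation. The identification $\intDop\fend\simeq\txt{Env}(\Dop_{\xint})$ is easier: $\fend(-,-)$ is the free monoid $\mathbb{N}$ seen as a discrete monoidal category, while by \cref{rmk:env} $\txt{Env}(\Dop_{\xint})$ is the delooping of $\Dop_{\xint}^{\txt{act}}$, which is discrete on $\mathbb{N}$ (since the only maps of $\Dop_{\xint}$ active in $\Dop$ are identities) with concatenation; alternatively one computes $\intDop\fend$ straight from the defining pushout $\fend\simeq C_1\amalg_{\partial C_1}C_0$, using that $\intDop$ preserves colimits.

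The two remaining points are routine. For $\Alg_{\Dop_{\xint}}(\intDop\mathcal{X})\simeq\mathcal{X}_1\times_{\mathcal{X}_0\times\mathcal{X}_0}\mathcal{X}_0$: unwinding $\Alg_{\Dop_{\xint}}(\mathcal{M})=\Map_{\txt{Opd}^{\txt{ns,gen}}_{\infty}}(\Dop_{\xint}\times\Delta^\bullet,\mathcal{M})$, a map $\Dop_{\xint}\times\Delta^k\to\mathcal{M}$ over $\Dop$ preserving inert cocartesian morphisms is, by the Segal condition for $\mathcal{M}$, uniquely determined by and freely given by a functor $\Delta^k\to\mathcal{M}_1\times_{\mathcal{M}_0\times\mathcal{M}_0}\mathcal{M}_0$ — the data of an object of $\mathcal{M}_0$ and an endomorphism of it in $\mathcal{M}_1$ — so $\Alg_{\Dop_{\xint}}(\mathcal{M})\simeq\mathcal{M}_1\times_{\mathcal{M}_0\times\mathcal{M}_0}\mathcal{M}_0$, and $(\intDop\mathcal{X})_n\simeq\mathcal{X}_n$ gives the stated form. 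Finally, the functor $\Nat(\fmnd,\mathcal{X})\to\Nat(\fend,\mathcal{X})$ is induced by the inclusion $\fend\hookrightarrow\fmnd$; the isomorphisms above can be chosen compatibly with it and with $\Dop_{\xint}\hookrightarrow\Dop$ (on each side the map just forgets the monoid structure and keeps the generating endomorphism), whence applying $\txt{Env}$ and the envelope adjunction exhibits the functor in question as restriction of algebras along $\Dop_{\xint}\to\Dop$.

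The essential work is the identification $\intDop\fmnd\simeq\txt{Env}(\Dop)$, and the delicate part there is the passage from the classical strict fact (the walking monad's hom-category is $\Delta_+$, and $\Delta_+$ with ordinal sum is the monoidal envelope of the associative operad) to an honest equivalence of double \icats{} — one must pin down the combinatorial isomorphism $\Delta_+\cong\Dop^{\txt{act}}$ and verify it is monoidal. It is important to argue strictly rather than through the $\infty$-categorical universal property of $\fmnd$, since the latter (monads as algebras in endomorphisms) is essentially what \S\ref{sec:mndalg} sets out to prove and invoking it here would be circular.
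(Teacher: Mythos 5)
Your proposal is correct and follows essentially the same route as the paper: both rest on the observation (via \cref{rmk:env}) that $\txt{Env}(\Dop)$ is the monoidal category $((\Dop)^{\txt{act}},\star)\simeq(\simp_{+},\star)$, \ie{} the delooping of the hom-category of the walking monad, and that $\txt{Env}(\Dop_{\xint})$ is the discrete monoidal category $(\mathbb{N},+)$, \ie{} $\fend$, with the remaining claims formal consequences of the envelope adjunction and \cref{rmk:Natcoc}. The only difference is that you spell out steps the paper leaves implicit (the monoidal comparison $\simp_{+}\cong(\Dop)^{\txt{act}}$ via interval duality, the reduction to strict data, the identification of $\Alg_{\Dop_{\xint}}$ with the fibre product, and compatibility with $\Dop_{\xint}\to\Dop$), which is harmless added detail rather than a different argument.
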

\begin{proof}
  By Remark~\ref{rmk:env} we know that $\txt{Env}(\Dop)$ is a
  monoidal structure on $(\Dop)^{\txt{act}} \simeq \simp_{+}$ given by
  concatenation, \ie{} it is precisely $(\simp_{+}, \star)$ which is
  the monoidal category corresponding to the one-object 2-category
  $\fmnd$. Similarly, $\txt{Env}(\Dop_{\txt{int}})$ is a monoidal
  structure on $(\Dop_{\txt{int}})^{\txt{act}}$, which is (as only the
  identity maps are both active and inert) the set $\{0,1,\ldots\}$,
  given by addition, which is precisely the monoidal category
  corresponding to the one-object 2-category $\fend$.
\end{proof}

\begin{defn}
  Let $i \colon C_{0} \to \Dop$ denote the functor picking out the
  object $[0]$. Then right Kan extension along $i$ gives a functor
  $i_{*}\colon \CatI \to \Fun(\Dop, \CatI)$ with $(i_{*}\mathcal{C})_{n}
  \simeq \mathcal{C}^{\times n+1}$. This is a double \icat{}, so we
  get an adjunction
  \[ i^{*} : \txt{Dbl}_{\infty} \rightleftarrows \CatI : i_{*}.\]
  We write $\Dop_{\mathcal{C}} \to \Dop$ for the cocartesian fibration
  corresponding to $i_{*}\mathcal{C}$. Note that, since $i^{*}$
  preserves products, we get for any double \icat{} $\mathcal{M}$ an
  equivalence
  \[ \Nat(\mathcal{M}, i_{*}\mathcal{C}) \simeq \Fun(\mathcal{M}_{0}, \mathcal{C}).\]
  Hence for any generalized
  non-symmetric \iopd{} $\mathcal{O}$ we get equivalences
  \[ \Alg_{\mathcal{O}}(\Dop_{\mathcal{C}}) \simeq
    \Fun^{\txt{cocart}}_{/\Dop}(\txt{Env}(\mathcal{O}),
    \Dop_{\mathcal{C}}) \simeq \Fun(\txt{Env}(\mathcal{O})_{0},
    \mathcal{C}) \simeq \Fun(\mathcal{O}_{0}, \mathcal{C}).\]
\end{defn}

\begin{defn}
  Suppose $\mathcal{M}$ is a double \icat{}, viewed as a cocartesian
  fibration. The unit of the adjunction $i^{*} \dashv i_{*}$
  corresponds to a functor $\mathcal{M} \to \Dop_{\mathcal{M}_{0}}$
  that preserves cocartesian morphisms. For $X \in \mathcal{M}_{0}$ we
  define $\mathcal{M}^{\otimes}_{X}$ to be the pullback
  \[
    \begin{tikzcd}
      \mathcal{M}^{\otimes}_{X} \arrow{r} \arrow{d} & \mathcal{M}
      \arrow{d} \\
      \Dop \arrow{r} & \Dop_{\mathcal{M}_{0}},
    \end{tikzcd}
  \]
  where the bottom horizontal map corresponds to $(\Dop)_{0} \simeq
  \{X\} \to \mathcal{M}_{0}$. This is a pullback of cocartesian
  fibrations over $\Dop$, so the natural projection
  $\mathcal{M}^{\otimes}_{X} \to \Dop$ is a cocartesian fibration,
  which exhibits $\mathcal{M}^{\otimes}_{X}$ as a monoidal structure
  on $(\mathcal{M}^{\otimes}_{X})_{1}$, which is the fibre of
  $\mathcal{M}_{1} \to \mathcal{M}_{0} \times \mathcal{M}_{0}$ at
  $(X,X)$.   For any generalized non-symmetric \iopd{} $\mathcal{O}$,
  the functor $\Alg_{\mathcal{O}}(\blank)$ preserves limits, and so
  gives a pullback square of \icats{}
  \[
    \begin{tikzcd}
      \Alg_{\mathcal{O}}(\mathcal{M}^{\otimes}_{X}) \arrow{r}
      \arrow{d} & \Alg_{\mathcal{O}}(\mathcal{M}) \arrow{d}\\
      \{X\} \arrow{r} & \Fun(\mathcal{O}_{0}, \mathcal{M}_{0}).
    \end{tikzcd}
  \]
\end{defn}
\begin{remark}
  If $\mathcal{X}$ is an \itcat{}, then the monoidal \icat{}
  $\mathcal{X}^{\otimes}_{X}$ is the monoidal structure on the \icat{}
  $\mathcal{X}(X,X)$ of endomorphisms of $X$ given by composition.
\end{remark}

Applying this construction to monads and endofunctors via
Remark~\ref{rmk:iconmndend} and Proposition~\ref{propn:mndenv}, we get:
\begin{cor}
  Let $\mathcal{X}$ be an \itcat{} and consider the commutative
  triangle
    \[
    \begin{tikzcd}
      \Mnd(\mathcal{X})^{\pcolax} \arrow{rr} \arrow{dr} &
      & \End(\mathcal{X})^{\pcolax} \arrow{dl} \\
      & \iota_{1}\mathcal{X}.
    \end{tikzcd}
  \]
  For any object $X \in \mathcal{X}$
  we have equivalences of fibres
  \[
    \Mnd(\mathcal{X})_{\colax,X} \simeq
    \Alg(\mathcal{X}^{\otimes}_{X}),
    \qquad
    \End(\mathcal{X})_{\colax,X} \simeq
        \mathcal{X}(X,X),\]
  \[\Mnd(\mathcal{X})_{\lax,X} \simeq
    \Alg(\mathcal{X}^{\otimes}_{X})^{\op},
    \qquad
    \End(\mathcal{X})_{\lax,X} 
     \simeq \mathcal{X}(X,X)^{\op}. \]
  Moreover, the morphisms on fibres at $X$ in the triangle
  can be identified with the forgetful functors
  \[ \Alg(\mathcal{X}^{\otimes}_{X})^{(\op)} \to
    \mathcal{X}(X,X)^{(\op)}\]
  from associative algebras to their underlying objects.\qed
\end{cor}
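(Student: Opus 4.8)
The plan is to chain together three things already established: the pullback squares of \cref{rmk:iconmndend}, which express the fibres of $\Mnd(\mathcal{X})_{\pcolax}$ and $\End(\mathcal{X})_{\pcolax}$ over $\iota_{0}\mathcal{X}$ in terms of $\Nat(\fmnd,\mathcal{X})$ and $\Nat(\fend,\mathcal{X})$; the identifications of \cref{propn:mndenv} of these $\Nat$-\icats{} with $\Alg_{\Dop}(\intDop\mathcal{X})$ and $\Alg_{\Dop_{\xint}}(\intDop\mathcal{X})$, compatibly with the forgetful maps; and the pullback square defining $\mathcal{M}^{\otimes}_{X}$ together with the limit-preservation of $\Alg_{\mathcal{O}}(\blank)$. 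Everything then reduces to a diagram chase, so I will only lay out the steps.

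For the colax case, write $\mathcal{M} := \intDop\mathcal{X}$, which is a double \icat{} since $\mathcal{X}$ is an \itcat{}. By \cref{rmk:iconmndend}, $\Mnd(\mathcal{X})_{\colax,X}$ and $\End(\mathcal{X})_{\colax,X}$ are the fibres at $X$ of the projections $\Nat(\fmnd,\mathcal{X}) \to \iota_{0}\mathcal{X}$ and $\Nat(\fend,\mathcal{X}) \to \iota_{0}\mathcal{X}$, the map between them being induced by $\fend \to \fmnd$. By \cref{propn:mndenv} these projections become the underlying-object functors $\Alg_{\mathcal{O}}(\mathcal{M}) \to \Fun(\mathcal{O}_{0},\mathcal{M}_{0}) \simeq \mathcal{M}_{0}$ for $\mathcal{O} = \Dop$ and $\mathcal{O} = \Dop_{\xint}$ (both of which satisfy $\mathcal{O}_{0} \simeq C_{0}$), and the map between them becomes restriction along $\Dop_{\xint} \to \Dop$. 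Applying $\Alg_{\mathcal{O}}(\blank)$, which preserves limits, to the pullback square defining $\mathcal{M}^{\otimes}_{X}$ identifies the fibre at $X$ of $\Alg_{\mathcal{O}}(\mathcal{M}) \to \mathcal{M}_{0}$ with $\Alg_{\mathcal{O}}(\mathcal{M}^{\otimes}_{X})$. Since $\mathcal{M}^{\otimes}_{X} = \mathcal{X}^{\otimes}_{X}$ is a monoidal \icat{}, \cref{propn:mndenv} further gives $\Alg_{\Dop_{\xint}}(\mathcal{X}^{\otimes}_{X}) \simeq (\mathcal{X}^{\otimes}_{X})_{1}$, which by construction is the fibre of $\mathcal{X}_{1} \to \mathcal{X}_{0} \times \mathcal{X}_{0}$ at $(X,X)$, \ie{} $\mathcal{X}(X,X)$. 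This yields the first two equivalences, and the map on fibres in the triangle is restriction along $\Dop_{\xint} \to \Dop$, which is precisely the forgetful functor from associative algebras to their underlying objects.

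The lax case follows formally: by \cref{rmk:iconmndend}, $\Mnd(\mathcal{X})_{\lax}$ and $\End(\mathcal{X})_{\lax}$ are the pullbacks of $\Nat(\fmnd,\mathcal{X})^{\op}$ and $\Nat(\fend,\mathcal{X})^{\op}$ along $\iota_{0}\mathcal{X} \to \iota_{1}\mathcal{X}$, and since $\iota_{0}\mathcal{X}$ is an \igpd{}, for any $p \colon \mathcal{E} \to \iota_{0}\mathcal{X}$ the fibre of $p^{\op}$ at $X$ is $(\mathcal{E}_{X})^{\op}$; feeding in the colax computation gives $\Mnd(\mathcal{X})_{\lax,X} \simeq \Alg(\mathcal{X}^{\otimes}_{X})^{\op}$ and $\End(\mathcal{X})_{\lax,X} \simeq \mathcal{X}(X,X)^{\op}$, with the map on fibres the opposite of the forgetful functor. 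The only genuine work — and the main, if modest, obstacle — is the bookkeeping that the ``fibre at $X$'' appearing in \cref{rmk:iconmndend}, in the envelope equivalences of \cref{propn:mndenv}, and in the pullback square defining $\mathcal{M}^{\otimes}_{X}$ really coincide: that is, that the object-evaluation maps out of $\fmnd$ and $\fend$, the equivalences of \cref{propn:mndenv}, and the unit $\mathcal{M} \to \Dop_{\mathcal{M}_{0}}$ are mutually compatible. Once that is confirmed, everything else is formal.
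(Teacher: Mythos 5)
Your proposal is correct and is essentially the paper's own argument: the corollary is stated with no further proof precisely because it follows by combining the pullback squares of \cref{rmk:iconmndend}, the envelope identifications of \cref{propn:mndenv}, and the pullback square obtained by applying the limit-preserving functor $\Alg_{\mathcal{O}}(\blank)$ to the square defining $\mathcal{X}^{\otimes}_{X}$, exactly as you lay out (with the lax case obtained by passing to opposites). The compatibility bookkeeping you flag at the end is real but is also left implicit in the paper.
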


In \S\ref{sec:adj} we used results of Zaganidis and Riehl--Verity to
construct a fully faithful functor
\[  \Mnd(\CATI)_{\lax} \to \Fun(\Delta^{1}, \CatI) \]
with image the monadic right adjoints, and we just saw that the fibre
of $\Mnd(\CATI)_{\lax}$ at a fixed \icat{}
$\mathcal{C}$ is equivalent to the \icat{}
$\Alg(\End(\mathcal{C}))$ of associative algebras
in endofunctors of $\mathcal{C}$ under composition. Our functor thus
restricts to a fully faithful functor $\Alg(\End(\mathcal{C}))
\to \Cat_{\infty/\mathcal{C}}$ with image the monadic right adjoints. 
We denote the image of this functor at $T$ by $\Alg_{T}(\mathcal{C})$.

On the other hand, the monoidal \icat{} $\End(\mathcal{C})$ acts on
$\mathcal{C}$, so given a monoid $T \in \Alg(\End(\mathcal{C}))$ we
can consider the \icat{} $\Alg_{T}^{\txt{Lur}}(\mathcal{C})$ of left $T$-modules
in $\mathcal{C}$; the forgetful functor $\Alg_{T}^{\txt{Lur}}(\mathcal{C})\to
\mathcal{C}$ is proved by Lurie~\cite{HA} to be a monadic right
adjoint. The following proposition shows that these two monadic right
adjoints associated to $T$ are equivalent:
\begin{propn}
   For $T \in \Alg(\End(\mathcal{C}))$ there is a canonical
equivalence \[ 
  \Alg_{T}(\mathcal{C}) \simeq \Alg_{T}^{\txt{Lur}}(\mathcal{C}).\]
\end{propn}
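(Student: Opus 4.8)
The plan is to exhibit both $\Alg_{T}(\mathcal{C})$ and $\Alg^{\txt{Lur}}_{T}(\mathcal{C})$, together with their forgetful functors to $\mathcal{C}$, as the same object of the $\infty$-category $\Cat_{\infty/\mathcal{C}}^{\mndradj}$ of monadic right adjoints over $\mathcal{C}$. Recall that the fully faithful functor $\Alg(\End(\mathcal{C})) \to \Cat_{\infty/\mathcal{C}}$ used above is the composite of the equivalence $\Alg(\End(\mathcal{C})) \simeq \Mnd(\CATI)_{\lax,\mathcal{C}}$ coming from the envelope description $\intDop\fmnd \simeq \txt{Env}(\Dop)$ of \cref{propn:mndenv} together with the corollary above, the fully faithful right adjoint $R \colon \MND(\CATI)_{\lax} \to \ADJ(\CATI)_{\lax}$ of \cref{cor:mndadj}, and the equivalence $\ADJ(\CATI)_{\lax} \isoto \FUN(C_{1}, \CATI)_{\radj}$ of \cref{cor:ADJlaxeq}, restricted throughout to the fibre over $\mathcal{C}$; by definition $\Alg_{T}(\mathcal{C}) \to \mathcal{C}$ is the value of this composite at $T$. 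Since $R$ is fully faithful, its counit $UR \to \id$ is invertible, so $R$ is a section of the forgetful functor $U \colon \ADJ(\CATI)_{\lax} \to \MND(\CATI)_{\lax}$; reading the composite backwards, the inverse of the equivalence $\Alg(\End(\mathcal{C})) \isoto \Cat_{\infty/\mathcal{C}}^{\mndradj}$ is the functor sending a monadic right adjoint $G \colon \mathcal{D} \to \mathcal{C}$ to its induced monad $GF \in \Alg(\End(\mathcal{C}))$, where $F \dashv G$. In particular the induced monad of $\Alg_{T}(\mathcal{C}) \to \mathcal{C}$ is canonically $T$ again.

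It therefore remains only to identify the induced monad of $\Alg^{\txt{Lur}}_{T}(\mathcal{C}) = \LMod_{T}(\mathcal{C}) \to \mathcal{C}$ with $T$. This is part of Lurie's analysis: the left adjoint to the forgetful functor $\LMod_{T}(\mathcal{C}) \to \mathcal{C}$ is the free left module functor $X \mapsto T \otimes X$ of \cite{HA}*{\S 4.7.3}, so the induced endofunctor of $\mathcal{C}$ is $X \mapsto T \otimes X \simeq T(X)$, with monad structure supplied by the associative algebra structure of $T$; that is, the induced monad is $T$. Both $\Alg_{T}(\mathcal{C}) \to \mathcal{C}$ and $\LMod_{T}(\mathcal{C}) \to \mathcal{C}$ are thus objects of $\Cat_{\infty/\mathcal{C}}^{\mndradj}$ with the same preimage $T$ under the equivalence $\Alg(\End(\mathcal{C})) \isoto \Cat_{\infty/\mathcal{C}}^{\mndradj}$, so applying that equivalence to the identity of $T$ yields the desired canonical equivalence $\Alg_{T}(\mathcal{C}) \simeq \Alg^{\txt{Lur}}_{T}(\mathcal{C})$ over $\mathcal{C}$. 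Naturality in $T$ is then automatic, since both $T \mapsto \Alg_{T}(\mathcal{C})$ and $T \mapsto \LMod_{T}(\mathcal{C})$ become equivalent to the identity of $\Alg(\End(\mathcal{C}))$ after composing with ``take the induced monad'', and hence are equivalent as functors.

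The only genuine difficulty is the bookkeeping behind the two identifications of induced monads, and in particular checking that they really refer to the \emph{same} associative algebra $T$: this amounts to tracking $T$ through the chain of equivalences of the first paragraph and comparing the resulting monad with the one extracted from Lurie's free--forgetful adjunction, while keeping straight the various opposite-category conventions (\eg{} that the corollary above identifies $\Mnd(\CATI)_{\lax,\mathcal{C}}$ with $\Alg(\End(\mathcal{C}))^{\op}$, matched by the fact that $\LMod_{(\blank)}(\mathcal{C})$ is contravariant in the algebra) as well as the distinction between the monad $GF$ and the endofunctor $FG$. Once the induced monad of $\Alg_{T}(\mathcal{C})$ is pinned to $T$ on the nose, everything else is formal.
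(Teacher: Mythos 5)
There is a genuine gap, and it sits exactly where you file things under ``bookkeeping''. The first half of your argument is fine: by construction $\Alg_{T}(\mathcal{C}) \to \mathcal{C}$ is the image of $T$ under the composite equivalence $\Alg(\End(\mathcal{C}))^{\op} \simeq \Mnd(\CATI)_{\lax,\mathcal{C}} \simeq \Cat_{\infty/\mathcal{C}}^{\mndradj}$ built from \cref{propn:mndenv}, \cref{thm:icon}, \cref{cor:mndadj} and \cref{cor:ADJlaxeq}, so its preimage is tautologically $T$. The problem is the second half: the claim that ``the induced monad of $\LMod_{T}(\mathcal{C}) \to \mathcal{C}$ is $T$'' is not something Lurie's analysis hands you in the sense your argument needs. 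In your setup, the induced monad of a monadic right adjoint means: promote it to a homotopy coherent adjunction via Riehl--Verity, restrict along $\fmnd \hookrightarrow \fadj$, and convert the resulting functor $\fmnd \to \CATI$ into an associative algebra in $\End(\mathcal{C})$ via the icon theorem and the envelope identification. Lurie identifies the underlying endofunctor of the free--forgetful adjunction with $T \otimes (\blank)$ and identifies $T$ as the endomorphism monad of the forgetful functor \emph{within his own framework of algebras and modules}; he says nothing about the associative algebra produced by the Riehl--Verity/icon chain, and identifying that algebra (not just its underlying endofunctor) with the $T$ you started from is precisely the content of the proposition. As written, your comparison of images under the equivalence assumes this identification and is therefore circular.

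The paper's proof supplies exactly the missing link, and any repair of your argument would need something equivalent: from the enriched right Kan extension along $\fmnd \hookrightarrow \fadj$ that defines $\Alg_{T}(\mathcal{C})$ one extracts not only the adjunction but an action of $T$ on the forgetful functor $U_{T}$, \ie{} a lift of $U_{T}$ to a left $T$-module in $\Fun(\Alg_{T}(\mathcal{C}), \mathcal{C})$; one then checks, using the criterion of \cite{HA}*{Proposition 4.7.3.3} that the composite $T \to T U_{T} F_{T} \to U_{T} F_{T}$ is an equivalence, that this action exhibits $T$ as the endomorphism monad of $U_{T}$; finally Lurie's monadicity theorem \cite{HA}*{Theorem 4.7.3.5}, via \cite{HA}*{Definition 4.7.3.4}, produces the equivalence $\Alg_{T}(\mathcal{C}) \isoto \LMod_{T}(\mathcal{C})$ over $\mathcal{C}$. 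The extra datum of the $T$-action on $U_{T}$ is what bridges the two notions of ``the monad of an adjunction''; comparing the two objects of $\Cat_{\infty/\mathcal{C}}^{\mndradj}$ only through their images under the abstract equivalence cannot manufacture that datum, so the difficulty you defer at the end is the theorem itself rather than a matter of tracking conventions.
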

\begin{proof}
  In the construction of Riehl and Verity the \icat{}
  $\Alg_{T}(\mathcal{C})$ and the monadic right adjoint
  $U_{T} \colon \Alg_{T}(\mathcal{C}) \to \mathcal{C}$ are obtained from the
  enriched right Kan extension of the functor
  $T \colon \fmnd \to \CATI$ along the inclusion
  $\fmnd \hookrightarrow \fadj$. From the structure of $\fadj$ we see
  that $U_{T}$ is a left $T$-module in
  $\Fun(\Alg_{T}(\mathcal{C}), \mathcal{C})$. Now by \cite{HA}*{Proposition 4.7.3.3} any right adjoint
functor $G \colon \mathcal{D} \to \mathcal{C}$ has an \emph{endomorphism
monad}, meaning a terminal object in the \icat{}
$\LMod(\Fun(\mathcal{D},\mathcal{C}))_{G}$ of monads $S$ on
$\mathcal{C}$ together with an $S$-action on $G$. Moreover, such a monad
$S$ acting on $G$ is the endomorphism monad of $G$ \IFF{} the composite
\[ S \to SGF \to GF \]
is an equivalence, where the first morphism uses the unit of the
adjunction and the second the action of $S$ on $G$.

The action of $T$ on $U_{T}$ certainly has this property, so $T$ is
the endomorphism monad of $U_{T}$. Now as $U_{T}$ is a monadic right
adjoint, Lurie's version of the Barr--Beck Theorem for \icats{}
\cite{HA}*{Theorem 4.7.3.5} (together with \cite{HA}*{Definition
  4.7.3.4}) furnishes an equivalence
\[ \Alg_{T}(\mathcal{C}) \isoto \Alg^{\txt{Lur}}_{T}(\mathcal{C}) := \LMod_{T}(\mathcal{C})\]
over $\mathcal{C}$.
\end{proof}

\begin{bibdiv}
\begin{biblist}
\bib{AyalaFrancisFlagged}{article}{
  author={Ayala, David},
  author={Francis, John},
  title={Flagged higher categories},
  conference={ title={Topology and quantum theory in interaction}, },
  book={ series={Contemp. Math.}, volume={718}, publisher={Amer. Math. Soc., Providence, RI}, },
  date={2018},
  pages={137--173},
}

\bib{BarwickThesis}{book}{
  author={Barwick, Clark},
  title={$(\infty ,n)$-{C}at as a closed model category},
  note={Thesis (Ph.D.)--University of Pennsylvania},
  date={2005},
}

\bib{BarwickMackey}{article}{
  author={Barwick, Clark},
  title={Spectral {M}ackey functors and equivariant algebraic $K$-theory ({I})},
  journal={Adv. Math.},
  volume={304},
  date={2017},
  pages={646--727},
  eprint={arXiv:1404.0108},
  year={2014},
}

\bib{BarwickSchommerPriesUnicity}{article}{
  author={Barwick, Clark},
  author={Schommer-Pries, Christopher},
  title={On the unicity of the homotopy theory of higher categories},
  eprint={arXiv:1112.0040},
  year={2011},
}

\bib{BergnerRezk2}{article}{
  author={Bergner, Julia E.},
  author={Rezk, Charles},
  title={Comparison of models for $(\infty , n)$-categories, II},
  journal={J. Topol.},
  volume={13},
  date={2020},
  number={4},
  pages={1554--1581},
  eprint={arXiv:1406.4182},
}

\bib{CKMComical}{article}{
  title={A cubical model for $(\infty ,n)$-categories},
  author={Campion, Tim},
  author={Kapulkin, Chris},
  author={Maehara, Yuki},
  date={2020},
  eprint={arXiv:2005.07603},
}

\bib{GagnaHarpazLanariGray}{article}{
  author={Gagna, Andrea},
  author={Harpaz, Yonatan},
  author={Lanari, Edoardo},
  title={Gray tensor products and lax functors of $(\infty ,2)$-categories},
  date={2020},
  eprint={arXiv:2006.14495},
}

\bib{GaitsgoryRozenblyum}{book}{
  author={Gaitsgory, Dennis},
  author={Rozenblyum, Nick},
  title={A study in derived algebraic geometry. Vol. I. Correspondences and duality},
  series={Mathematical Surveys and Monographs},
  volume={221},
  publisher={American Mathematical Society, Providence, RI},
  date={2017},
  note={Available at \url {http://www.math.harvard.edu/~gaitsgde/GL}.},
}

\bib{enr}{article}{
  author={Gepner, David},
  author={Haugseng, Rune},
  title={Enriched $\infty $-categories via non-symmetric $\infty $-operads},
  journal={Adv. Math.},
  volume={279},
  pages={575--716},
  eprint={arXiv:1312.3178},
  date={2015},
}

\bib{polynomial}{article}{
  author={Gepner, David},
  author={Haugseng, Rune},
  author={Kock, Joachim},
  title={$\infty $-operads as analytic monads},
  date={2017},
  eprint={arXiv:1712.06469},
}

\bib{GrayFormal}{book}{
  author={Gray, John W.},
  title={Formal category theory: adjointness for $2$-categories},
  series={Lecture Notes in Mathematics, Vol. 391},
  publisher={Springer-Verlag, Berlin-New York},
  date={1974},
}

\bib{nmorita}{article}{
  author={Haugseng, Rune},
  title={The higher {M}orita category of $E_{n}$-algebras},
  date={2017},
  eprint={arXiv:1412.8459},
  journal={Geom. Topol.},
  volume={21},
  issue={3},
  pages={1631--1730},
}

\bib{enrcomp}{article}{
  author={Haugseng, Rune},
  title={Rectifying enriched $\infty $-categories},
  journal={Algebr. Geom. Topol.},
  volume={15},
  issue={4},
  pages={1931--1982},
  eprint={arXiv:1312.3178},
  date={2015},
}

\bib{thetan}{article}{
  author={Haugseng, Rune},
  title={On the equivalence between $\Theta _n$-spaces and iterated Segal spaces},
  journal={Proc. Amer. Math. Soc.},
  volume={146},
  date={2018},
  number={4},
  pages={1401--1415},
  issn={0002-9939},
  eprint={arXiv:1604.08480},
}

\bib{Heine}{article}{
  author={Heine, Hadrian},
  title={About the equivalence between monads and monadic functors},
  eprint={arXiv:1712.00555},
  date={2017},
}

\bib{JohnsonFreydScheimbauerLax}{article}{
  author={Johnson-Freyd, Theo},
  author={Scheimbauer, Claudia},
  title={(Op)lax natural transformations, twisted quantum field theories, and ``even higher'' Morita categories},
  journal={Adv. Math.},
  volume={307},
  date={2017},
  pages={147--223},
  eprint={arXiv:1502.06526},
}

\bib{KellyDoctrine}{article}{
  author={Kelly, G. M.},
  title={Doctrinal adjunction},
  conference={ title={Category Seminar}, address={Proc. Sem., Sydney}, date={1972/1973}, },
  book={ publisher={Springer, Berlin}, },
  date={1974},
  pages={257--280. Lecture Notes in Math., Vol. 420},
}

\bib{Lack:icons}{article}{
  author={Lack, Stephen},
  title={Icons},
  journal={Appl. Categ. Struct.},
  volume={18},
  date={2010},
  pages={289--307},
}

\bib{LiuZheng}{article}{
  title={Enhanced six operations and base change theorem for {A}rtin stacks},
  author={Liu, Yifeng},
  author={Zheng, Weizhe},
  date={2014},
  eprint={arXiv:1211.5948},
}

\bib{HTT}{book}{
  author={Lurie, Jacob},
  title={Higher Topos Theory},
  series={Annals of Mathematics Studies},
  publisher={Princeton University Press},
  address={Princeton, NJ},
  date={2009},
  volume={170},
  note={Available at \url {http://math.ias.edu/~lurie/}},
}

\bib{LurieGoodwillie}{article}{
  author={Lurie, Jacob},
  title={($\infty $,2)-Categories and the {G}oodwillie Calculus {I}},
  date={2009},
  eprint={http://math.ias.edu/~lurie/papers/GoodwillieI.pdf},
}

\bib{HA}{book}{
  author={Lurie, Jacob},
  title={Higher Algebra},
  date={2017},
  note={Available at \url {http://math.ias.edu/~lurie/}},
}

\bib{MaeharaGray}{article}{
  author={Maehara, Yuki},
  title={The Gray tensor product for 2-quasi-categories},
  journal={Adv. Math.},
  volume={377},
  date={2021},
  pages={107461, 78},
  eprint={arXiv:2003.11757},
}

\bib{OzornovaRovelliVerityGray}{article}{
  author={Ozornova, Viktoriya},
  author={Rovelli, Martina},
  author={Verity, Dominic},
  title={Gray tensor product and saturated $N$-complicial sets},
  date={2020},
  eprint={arXiv:2007.01235},
}

\bib{RezkCSS}{article}{
  author={Rezk, Charles},
  title={A model for the homotopy theory of homotopy theory},
  journal={Trans. Amer. Math. Soc.},
  volume={353},
  date={2001},
  number={3},
  pages={973--1007 (electronic)},
}

\bib{RezkThetaN}{article}{
  author={Rezk, Charles},
  title={A Cartesian presentation of weak $n$-categories},
  journal={Geom. Topol.},
  volume={14},
  date={2010},
  number={1},
  pages={521--571},
  eprint={arXiv:0901.3602v3},
}

\bib{RiehlVerityAdj}{article}{
  author={Riehl, Emily},
  author={Verity, Dominic},
  title={Homotopy coherent adjunctions and the formal theory of monads},
  journal={Adv. Math.},
  volume={286},
  date={2016},
  pages={802--888},
  eprint={arXiv:1310.8279},
}

\bib{RiehlVerityBook}{book}{
  author={Riehl, Emily},
  author={Verity, Dominic},
  title={Elements of $\infty $-Category Theory},
  date={2021},
  note={Available from \url {http://www.math.jhu.edu/~eriehl/elements.pdf}.},
}

\bib{SchanuelStreet}{article}{
  author={Schanuel, Stephen},
  author={Street, Ross},
  title={The free adjunction},
  language={English, with French summary},
  journal={Cahiers Topologie G\'{e}om. Diff\'{e}rentielle Cat\'{e}g.},
  volume={27},
  date={1986},
  number={1},
  pages={81--83},
}

\bib{StreetLaxConstr}{article}{
  author={Street, Ross},
  title={Two constructions on lax functors},
  journal={Cahiers Topologie G\'{e}om. Diff\'{e}rentielle},
  volume={13},
  date={1972},
  pages={217--264},
}

\bib{StreetFormalMonad}{article}{
  author={Street, Ross},
  title={The formal theory of monads},
  journal={J. Pure Appl. Algebra},
  volume={2},
  date={1972},
  pages={149--168},
}

\bib{Zaganidis}{article}{
  author={Zaganidis, Dimitri},
  title={Towards an $(\infty ,2)$-category of homotopy coherent monads in an $\infty $-cosmos},
  note={Thesis (Ph.D.)--École polytechnique fédérale de Lausanne},
  eprint={https://infoscience.epfl.ch/record/231037},
  date={2017},
}
\end{biblist}
\end{bibdiv}

 \end{document}